\documentclass[psamsfonts]{amsart}

\usepackage[a4paper,bindingoffset=0.0in,left=.975in,right=.975in,top=1.50in,bottom=1.50in,footskip=.35in]{geometry}

%-------Packages---------
\usepackage{amssymb,amsfonts}
\usepackage[all,arc]{xy}
\usepackage{enumerate}
\usepackage{mathrsfs}

\usepackage{setspace}

%--------Theorem Environments--------
%theoremstyle{plain} --- default
\newtheorem{thm}{Theorem}[section]
\newtheorem{cor}[thm]{Corollary}
\newtheorem{prop}[thm]{Proposition}
\newtheorem{lem}[thm]{Lemma}

\theoremstyle{definition}
\newtheorem{defn}[thm]{Definition}

\theoremstyle{remark}
\newtheorem{rem}[thm]{Remark}

%-------------------------------------------

\newcommand{\A}{\Bbb{A}}

\newcommand{\R}{\Bbb{R}}

\newcommand{\C}{\Bbb{C}}

\newcommand{\Q}{\Bbb{Q}}

\newcommand{\OO}{\mathcal{O}}

\newcommand{\CC}{\mathcal{C}}

\newcommand{\WW}{\mathcal{W}}

\newcommand{\FF}{\mathcal{F}}

\newcommand{\HH}{\mathcal{H}}

\newcommand{\F}{\Bbb{F}}

\newcommand{\G}{\Bbb{G}}

\newcommand{\Z}{\Bbb{Z}}

\newcommand{\U}{\Bbb{U}}

\newcommand{\B}{\Bbb{B}}

\newcommand{\T}{\Bbb{T}}

\newcommand{\p}{\frak{p}}

\newcommand{\m}{\frak{m}}

\newcommand{\M}{\mathcal{M}}

\newcommand{\WD}{\text{WD}}

\newcommand{\Mod}{\text{Mod}}

\newcommand{\adm}{\text{adm}}

\newcommand{\an}{\text{an}}

\newcommand{\Ind}{\text{Ind}}

\newcommand{\sm}{\text{sm}}

\newcommand{\alg}{\text{alg}}

\newcommand{\Sp}{\text{Sp}}

\newcommand{\cl}{\text{cl}}

\newcommand{\Spf}{\text{Spf}}

\newcommand{\rig}{\text{rig}}

\newcommand{\rec}{\text{rec}}

\newcommand{\Ban}{\text{Ban}}

\newcommand{\diag}{\text{diag}}

\newcommand{\Frob}{\text{Frob}}

\newcommand{\Hom}{\text{Hom}}

\newcommand{\Ext}{\text{Ext}}

\newcommand{\End}{\text{End}}

\newcommand{\tr}{\text{tr}}

\newcommand{\Gal}{\text{Gal}}

\newcommand{\GL}{\text{GL}}

\newcommand{\BC}{\text{BC}}

\newcommand{\Spec}{\text{Spec}}

\newcommand{\y}{\hspace{6pt}}

%-------------------------------------

\newenvironment{psmallmatrix}
  {\left(\begin{smallmatrix}}
  {\end{smallmatrix}\right)}
  
%------------------

\makeatletter
\let\c@equation\c@thm
\makeatother
\numberwithin{equation}{section}

\bibliographystyle{plain}

%--------Meta Data: Fill in your info------
\title{Local Langlands correspondence in rigid families}

\author{Christian Johansson \y \y James Newton \y \y Claus Sorensen}

\date{}

\begin{document}

\begin{abstract}
We show that local-global compatibility (at split primes) away from $p$ holds at {\it{all}} points of the $p$-adic eigenvariety of a definite $n$-variable unitary group.
The novelty is we allow non-classical points, possibly non-\'{e}tale over weight space. More precisely we interpolate the local Langlands correspondence for $\GL_n$ across the eigenvariety by considering the fibers of its defining coherent sheaf. We employ techniques of Scholze from his new approach to the local Langlands conjecture.
\end{abstract}

\maketitle

\tableofcontents

%-------------------------

\onehalfspacing

\section{Introduction}

The proof of (strong) local-global compatibility in the $p$-adic Langlands program for $\GL(2)_{/\Q}$, cf. \cite{Em11b}, led Emerton and Helm to speculate on whether the local Langlands correspondence for $\GL(n)$ can be interpolated in Zariski families: Say $L/\Q_{\ell}$ is a finite extension, with absolute Galois group $\Gal_L$, and $R$ is a reduced complete local Noetherian ring with residual characteristic $p \neq \ell$. Assume $R$ is $p$-torsion free with finite residue field $R/\m_R$.
If $\rho: \Gal_L \rightarrow \GL_n(R)$ is a continuous representation it is shown in \cite{EH14} (cf. their Theorem 1.2.1) that there is at {\it{most}} one admissible $\GL_n(L)$-representation $\tilde{\pi}(\rho)$ over $R$ satisfying a short list of natural desiderata:
\begin{itemize}
\item[(a)] $\tilde{\pi}(\rho)$ is $R$-torsion free (meaning $\text{Ann}_R (\tilde{\pi}(\rho))$ consists of zero-divisors);
\item[(b)] If $\p \subset R$ is a minimal prime with residue field $\kappa(\p)=\text{Frac}(R/\p)$ (which is a possibly non-algebraic extension of $\Q_p$ as the example $\Z_p[[X]]$ shows) then
$$
\tilde{\pi}(\rho \otimes_R \kappa(\p)) \overset{\sim}{\longrightarrow} \tilde{\pi}(\rho)\otimes_R \kappa(\p),
$$
where the source is the representation of $\GL_n(L)$ associated with $\rho \otimes_R \kappa(\p)$ under the local Langlands correspondence (suitably modified and normalized);
cf. \cite[Sect. 4]{EH14}.
\item[(c)] The reduction $\tilde{\pi}(\rho)\otimes_R R/\m_R$ has an absolutely irreducible and generic cosocle (i.e., the largest semisimple quotient), and no other generic constituents. 
\end{itemize}
They conjecture (\cite[Conj. 1.4.1]{EH14}) that such a $\tilde{\pi}(\rho)$ always exists, and this is now known in many cases, due to ongoing work of Helm and Moss; in particular when $p$ is banal (meaning that 
$1,q,\ldots,q^n$ are distinct modulo $p$, where $q$ is the size of the residue field of $L$), see section 8 of \cite{Hel12}. 

In this paper we take a different point of view, and interpolate the local Langlands correspondence in {\it{rigid}} families -- across eigenvarieties of definite unitary groups; in much the same spirit as \cite{Pau11} for the eigencurve, except that our approach is more representation-theoretic than geometric and based on the construction of eigenvarieties in \cite{Em06}
and \cite{BHS16}. In the next few paragraphs we introduce notation in order to state our main result (Theorem \ref{main} below). 

Let $p>2$ be a prime, and fix an unramified CM extension $F/F^+$ which is split at all places $v$ of $F^+$ above $p$. Suppose $U_{/F^+}$ is a unitary group in $n$ variables which is quasi-split at all finite places and compact at infinity (see \ref{unit} for more details). Throughout $\Sigma$ is a finite set of finite places of $F^+$ containing $\Sigma_p=\{v:v|p\}$, and we let $\Sigma_0=\Sigma\backslash \Sigma_p$. We assume all places $v \in \Sigma$ split in $F$ and we choose a divisor $\tilde{v}|v$ once and for all, which we use to make the identification $U(F_v^+)\overset{\sim}{\longrightarrow} \GL_n(F_{\tilde{v}})$. We do {\it{not}} assume the places in $\Sigma$ are banal (as in \cite{CHT08} for instance). 
We consider tame levels of the form $K^p=K_{\Sigma_0}K^{\Sigma}$  where $K^{\Sigma}=\prod_{v \notin \Sigma}K_v$ is a product of hyperspecial maximal compact subgroups, and $K_{\Sigma_0}=\prod_{v \in \Sigma_0}K_v$. 

Our coefficient field is a sufficiently large finite extension $E/\Q_p$ with integers $\OO$ and residue field $k=k_E$, and we start off with an absolutely irreducible\footnote{This is mostly for convenience. The automorphic $\OO$-lifts of $\bar{r}$ then arise from {\it{cusp}} forms on $\GL_n(\A_F)$, cf. Lem. \ref{unique}. } Galois representation 
$\bar{r}: \Gal_F \rightarrow \GL_n(k)$ which is automorphic of tame level $K^p$. We let $\m=\m_{\bar{r}}$ be the associated maximal ideal, viewed in various Hecke algebras (see sections \ref{hecke} and \ref{gal} for more details). In \ref{def} and \ref{defspace} we introduce the universal deformation ring $R_{\bar{r}}$ and the deformation space $X_{\bar{r}}=\Spf(R_{\bar{r}})^{\rig}$. Each point $x \in X_{\bar{r}}$ carries a Galois representation $r_x$, which is a deformation of $\bar{r}$, and we let $\p_x \subset R_{\bar{r}}$ be the associated prime ideal. The Banach representation of $p$-adic automorphic forms $\hat{S}(K^p,E)_{\m}$ inherits a natural $R_{\bar{r}}$-module structure, and we consider its $\p_x$-torsion $\hat{S}(K^p,E)_{\m}[\p_x]$ and its dense subspace of locally analytic vectors $\hat{S}(K^p,E)_{\m}[\p_x]^{\an}$, cf. section \ref{auto}.

The eigenvariety $Y(K^p,\bar{r})\subset X_{\bar{r}}\times \hat{T}$ is defined as the support of a certain coherent sheaf $\M$ on $X_{\bar{r}}\times \hat{T}$. Here $\hat{T}$ denotes the character space of the $p$-adic torus $T\subset U(F^+\otimes \Q_p)$ isomorphic to $\prod_{v|p}T_{\GL(n)}(F_{\tilde{v}})$, see \ref{charspace} below. We have $\hat{T}\simeq \WW \times (\G_m^{\rig})^{n|\Sigma_p|}$ where $\WW$ is weight space (parametrizing continuous characters of the maximal compact subgroup of $T$) which is a disjoint union of finitely many open unit balls of dimension $n[F^+:\Q]$. By definition a point $y=(x,\delta) \in X_{\bar{r}}\times \hat{T}$ belongs to the eigenvariety $Y(K^p,\bar{r})$ if and only if the fiber $\M_y$
is nonzero. If $y$ is $E$-rational the $E$-linear dual of $\M_y$ can be described as 
$$
\M_y'\simeq J_B^{\delta}(\hat{S}(K^p,E)_{\m}[\p_x]^{\an})
$$
where $J_B$ denotes Emerton's locally analytic variant of the Jacquet functor (\cite{Em06b}) and $J_B^{\delta}$ means the $\delta$-eigenspace. Morally our main result states that 
$\varinjlim_{K_{\Sigma_0}} \M_y'$ interpolates the local Langlands correspondence for $\GL_n$ across the eigenvariety. Here is the precise formulation. 

\begin{thm}\label{main}
Let $y=(x,\delta)\in Y(K^p,\bar{r})(E)$ be a point such that $r_x$ is {\bf{strongly generic}} at every $v \in \Sigma_0$ (cf. Def. \ref{gen} in the main text). Then there is an $m_y \in \Z_{>0}$ such that up to semi-simplification
$$
\varinjlim_{K_{\Sigma_0}} J_B^{\delta}(\hat{S}(K^p,E)_{\m}[\p_x]^{\an})\overset{ss}{\simeq} \big(\otimes_{v\in \Sigma_0}\pi_{x,v}\big)^{\oplus m_y}
$$
as admissible representations of $U(F_{\Sigma_0}^+)\overset{\sim}{\longrightarrow} \prod_{v\in \Sigma_0} \GL_n(F_{\tilde{v}})$, where $\pi_{x,v}$ is the irreducible smooth representation of $U(F_v^+) \overset{\sim}{\longrightarrow} \GL_n(F_{\tilde{v}})$ associated with $r_x|_{\Gal_{F_{\tilde{v}}}}$ via the local Langlands correspondence: 
$$
\WD(r_x|_{\Gal_{F_{\tilde{v}}}})^{F-ss}\simeq \rec(\BC_{\tilde{v}|v}(\pi_{x,v}) \otimes |\det|^{(1-n)/2})
$$
with $\rec(\cdot)$ normalized as in \cite{HT01}.
\end{thm}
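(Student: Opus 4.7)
The plan is to prove the statement by reducing it to a character identity on $\prod_{v\in\Sigma_0}\GL_n(F_{\tilde{v}})$, and then to interpolate that identity from the Zariski-dense locus of classical points, where it is provided by classical local-global compatibility at split primes. The key technical device is Scholze's trace/reciprocity formula from his new construction of the local Langlands correspondence for $\GL_n$, which is what allows one to propagate traces from classical to non-classical (and possibly non-\'etale) points.

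\emph{Reduction to characters.} Both sides of the asserted semisimple isomorphism are smooth admissible representations of $U(F_{\Sigma_0}^+)$, so after semisimplification they are determined by their Harish-Chandra characters. It therefore suffices, for each test function $f = \otimes_{v\in\Sigma_0} f_v$ with $f_v$ locally constant and compactly supported on $\GL_n(F_{\tilde{v}})$, to prove
\begin{equation*}
\tr\!\bigl(f \,\bigm|\, \varinjlim_{K_{\Sigma_0}} J_B^\delta(\hat{S}(K^p,E)_\m[\p_x]^{\an})\bigr) \;=\; m_y \cdot \prod_{v\in\Sigma_0} \tr(f_v \mid \pi_{x,v}).
\end{equation*}
Scholze's main result produces, for each $f_v$, a conjugation-invariant function $\widetilde{f}_v$ on the space of Frobenius-semisimple Weil--Deligne representations such that the $v$-th factor on the right equals $\widetilde{f}_v(\WD(r_x|_{\Gal_{F_{\tilde{v}}}})^{F\text{-}ss})$. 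This exhibits the right-hand side as the pullback of a regular function on $X_{\bar{r}}$ along the map $Y(K^p,\bar{r}) \to X_{\bar{r}}$.

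\emph{Interpolation via the eigenvariety.} Next I would package the left-hand side as a similarly regular object on $Y(K^p,\bar{r})$. The action of $f$ on the appropriate finite-slope component of $\M$ is $\OO$-linear and commutes with the $R_{\bar{r}}$-action, so $\tr(f\mid\cdot)$ gives a section whose value at $y$ computes $m_y^{-1}$ times the left-hand trace. At classical, locally algebraic points the identity reduces via Emerton's comparison between his locally analytic $J_B$ and classical Jacquet modules to the known local-global compatibility at split primes (Caraiani, Barnet-Lamb--Gee--Geraghty--Taylor, \dots); these points are Zariski dense in each irreducible component of $Y(K^p,\bar{r})$. Density, combined with the fact that both sides have been cast as (pullbacks of) sections of coherent sheaves on $Y(K^p,\bar{r})$, then yields the identity at every $E$-rational $y$. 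The strong genericity hypothesis at $\Sigma_0$ plays a dual role here: it ensures that $\pi_{x,v}$ is a well-defined irreducible generic representation pinned down by its character, and it rules out the appearance of non-generic constituents in the semisimplification.

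\emph{Main obstacle.} The hard part will be controlling the left-hand side at points $y$ that are non-classical or non-\'etale over weight space: the fiber $\M_y$ need not be a transparent specialization of nearby classical fibers, and $\hat{S}(K^p,E)_\m[\p_x]^{\an}$ can differ from what one would naively read off from $\M_y$. Scholze's formulation, which operates at the level of traces on the entire Banach/locally analytic module rather than on individual classical specializations, is precisely what should bridge this gap. A secondary subtlety is identifying the integer $m_y$: it will arise as the length of (an appropriate localization of) $\M$ at $y$ as an $R_{\bar{r}}$-module, and strong genericity is what should guarantee both finiteness and positivity of this length, and ensure that the finite-slope trace of $f$ does not vanish identically along the $\p_x$-direction.
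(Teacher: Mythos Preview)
Your strategy has a real gap at exactly the point you flag as the ``main obstacle,'' and the paper's actual proof takes a different (and more robust) route precisely to avoid it.

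You want to interpolate $\tr(f\mid \M_y')$ for \emph{general} test functions $f=\otimes_v f_v$. But $y\mapsto \tr(f\mid \M_y')$ is not the specialization of a section of $\OO_{Y(K^p,\bar r)}$: the fiber dimension of $\M$ jumps at non-\'etale points, so there is no coherent ``trace section'' to compare with the Galois side. Your appeal to ``Scholze's formulation \dots\ at the level of traces on the entire Banach module'' is too vague to close this; nothing in Scholze's work turns an arbitrary $f_v$ into a function on Weil--Deligne representations in the way you describe. (Scholze goes the other direction: from a Weil element $\tau$ to a distribution $f_\tau$.)

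The paper's key maneuver is to replace general test functions by Scholze's elements $f_\tau$ in the \emph{Bernstein center}, one for each tuple $\tau=(\tau_{\tilde v})\in\prod_{v\in\Sigma_0}W_{F_{\tilde v}}$. Because $f_\tau$ is central it acts on every irreducible as a \emph{scalar}, namely $\prod_v\tr(\tau_{\tilde v}\mid\rec(\pi_v\otimes|\det|^{(1-n)/2}))$. So the statement to interpolate is not a trace identity but an identity of \emph{endomorphisms} of the sheaf $\M$: $f_\tau^{K_{\Sigma_0}}=a_\tau\cdot\mathrm{id}$, where $a_\tau\in\OO(Y(K^p,\bar r))$ is built from the family of Weil--Deligne representations. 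This holds on the very Zariski dense set of classical non-critical points by Proposition~\ref{ncr}, and the paper propagates it to all of $\M$ using projectivity of $\M$ over \emph{weight space} (not over the eigenvariety), which absorbs the fiber-jumping; see Lemma~\ref{density} and Proposition~\ref{fam}. Specializing at $y$ then shows that every irreducible subquotient of $\varinjlim_{K_{\Sigma_0}}\M_y'$ has the same \emph{supercuspidal support} as $\otimes_v\pi_{x,v}$ (Lemma~\ref{sq}); this is strictly weaker than a full character identity, and it is all one can get from Bernstein-center elements.

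This is also where strong genericity enters, and differently from how you use it: it says $\pi_{x,v}$ is irreducibly \emph{fully induced} from a supercuspidal, hence is the unique irreducible with its supercuspidal support; thus every constituent must be $\otimes_v\pi_{x,v}$. Finite length of $\varinjlim_{K_{\Sigma_0}}\M_y'$ is a separate argument via conductor bounds (Lemma~\ref{length}), not a consequence of genericity, and $m_y$ is simply that length.
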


The notation $\BC_{\tilde{v}|v}(\pi_{x,v})$ used in the theorem signifies local base change, which simply amounts to viewing $\pi_{x,v}$ as a representation of $\GL_n(F_{\tilde{v}})$ via its identification with $U(F_v^+)$. 

%By $y$ being {\it{\'{e}tale}} we mean that there is a neighborhood $\Omega$ such that the weight map $\omega: Y(K^p,\bar{r}) \rightarrow \WW$ restricts to an isomorphism $\Omega \overset{\sim}{\longrightarrow} \omega(\Omega)$. This may be a bit ad hoc.

The hypothesis that $r_x$ is strongly generic guarantees that $\pi_{x,v}$ is fully induced from a supercuspidal. Our result is actually slightly more general (cf. Lemma \ref{sq}) in that it deals with arbitrary (non-generic) points as well. In that generality one can only say that any irreducible subquotient of 
$\varinjlim_{K_{\Sigma_0}} \M_y'$  has the same supercuspidal support as $\otimes_{v\in \Sigma_0}\pi_{x,v}$. In particular $\varinjlim_{K_{\Sigma_0}} \M_y'$ lies in the Bernstein component $\mathcal{R}^{\frak{s}}(U(F_{\Sigma_0}^+))$ for the inertial class $\frak{s}$ determined by $y$ (cf. section \ref{generic}). We have no information about the monodromy operator. Our methods are based on $p$-adic interpolation of traces and therefore seem inherently inapplicable to deal with non-trivial monodromy. In the case where $K_{\Sigma_0}$ is a product of Iwahori subgroups we can be a little more precise however (see Theorem \ref{iwcase} at the very end); using a genericity criterion of Barbasch-Moy, generalized by Chan-Savin, we show for {\it{any}} point $y$ that 
$\otimes_{v\in \Sigma_0}\pi_{x,v}^{\text{gen}}$ is the only generic constituent of $\varinjlim_{K_{\Sigma_0}} \M_y'$ -- and it {\it{does}} appear -- where $\pi_{x,v}^{\text{gen}}$ denotes the generic representation with the same supercuspidal support as $\pi_{x,v}$. At the other extreme, when $y=(x,\delta)$ is a point for which $\pi_{x,v}$ is supercuspidal for all $v \in \Sigma_0$ we can remove the '$ss$' in Theorem \ref{main} since there are no self-extensions with central character that of $\pi_{x,v}$ (cf. Remark \ref{ext}) by the projectivity and/or injectivity of $\pi_{x,v}$ in this category.  

We expect that the length $m_y$ of $\varinjlim_{K_{\Sigma_0}} \M_y'$ as a $U(F_{\Sigma_0}^+)$-representation can be $>1$ at certain singular points. 
If $y$ is a classical point of non-critical slope (automatically \'{e}tale by \cite[Thm. 4.10]{Che11}) $m_y=1$, cf. Proposition \ref{ncr} below. Under certain mild non-degeneracy assumptions, $m_y$ should be closely related to $\dim_E J_B^{\delta}(\Pi(\varrho_x)^{\an})$; which is finite by \cite[Cor. 0.15]{Em06c}. Here 
$\varrho_x:=\{r_x|_{\Gal_{F_{\tilde{v}}}}\}_{v\in \Sigma_p}$ and
$\Pi(\varrho_x):=\hat{\otimes}_{v|p} \Pi(r_x|_{\Gal_{F_{\tilde{v}}}})$, where $\Pi(\cdot)$ is the $p$-adic local Langlands correspondence for $\GL_n(F_{\tilde{v}})$ -- as defined in
\cite{CEG$^+$16} say, to fix ideas\footnote{At least for the choice of $R_{\infty}\rightarrow \OO$ in \cite{CEG$^+$16} compatible with $x:R_{\bar{r}}\rightarrow \OO$ via the projection $R_{\infty} \twoheadrightarrow R_{\bar{r}}$.}. This expectation is based on the strong local-global compatibility results of \cite{Em11b} and \cite{CS17}, which also seem to suggest that 
$\varinjlim_{K_{\Sigma_0}} \M_y'$ should in fact be semisimple -- for generic points (otherwise the 'generic' local Langlands correspondence gives a reducible indecomposable representation). We are not sure if this is an artifact of the $n=2$ case, or this is supposed to be true more generally. It is certainly not true for trivial reasons since $\pi_{x,v}$ does admit non-trivial self-extensions. For example, by \cite[Cor. 2]{Orl05} we have $\dim\Ext^i_{\GL_n}(\text{St},\text{St})=\binom{n}{i}$. 
Even when $\pi_{x,v}$ is parabolically induced from a supercuspidal it does happen that $\Ext_{\GL_n(F_{\tilde{v}})}^1(\pi_{x,v},\pi_{x,v})\neq 0$ (cf. Remark \ref{ext}.)

Theorem \ref{main} was motivated in part by the question of local-global compatibility for the Breuil-Herzig construction $\Pi(\rho)^{\text{ord}}$, cf. \cite[Conj. 4.2.5]{BH15}. The latter 
is defined for upper triangular $p$-adic representations $\rho$ of $\Gal_{\Q_p}$, and is supposed to model the largest subrepresentation of the 'true' $p$-adic local Langlands correspondence built from unitary continuous principal series representations. We approach this problem starting from the inclusion (for unitary $\delta$)
\begin{equation}\label{jacord}
J_B^{\delta}(\hat{S}(K^p,E)_{\m}[\p_x]^{\an}) \hookrightarrow \text{Ord}_B^{\delta}(\hat{S}(K^p,\OO)_{\m}[\p_x])[1/p]^{\an},
\end{equation}
as shown in \cite[Thm. 6.2]{Sor15}. Here $\text{Ord}_B$ is Emerton's functor of ordinary parts (\cite{Em10}), which is right adjoint to parabolic induction $\text{Ind}_{\bar{B}}$.
If $y=(x,\delta)$ lies on $Y(K^p,\bar{r})$ the source of (\ref{jacord}) is nonzero, and we deduce the existence of a nonzero (norm-decreasing) equivariant map 
$\text{Ind}_{\bar{B}}(\delta) \rightarrow \hat{S}(K^p,E)_{\m}[\p_x]$. If one could show that certain Weyl-conjugates $y_w=(x,w\delta)$ all lie on $Y(K^p,\bar{r})$ one would infer that
there is a non-trivial map $\text{soc}_{\GL_n(\Q_p)} \Pi(\rho)^{\text{ord}}\rightarrow \hat{S}(K^p,E)_{\m}[\p_x]$ which one could hope to promote to a map $\Pi(\rho)^{\text{ord}}\rightarrow \hat{S}(K^p,E)_{\m}[\p_x]$ using \cite[Cor. 4.3.11]{BH15}. Here we take $\rho=r_x|_{\Gal_{F_{\tilde{v}}}}$ (up to a twist which we ignore here) for some $v|p$ such that $F_{\tilde{v}}=\Q_p$, and $x$ is a point where $r_x|_{\Gal_{F_{\tilde{v}}}}$ is upper triangular with $\delta_{\tilde{v}}$ on the diagonal. With a little more work, it is conceivable that Theorem \ref{main} would give {\it{strong}} local-global compatibility -- in the sense that there is an embedding
$$
\otimes_{v\in \Sigma_0}\pi_{x,v} \hookrightarrow \varinjlim_{K_{\Sigma_0}} \Hom_{\GL_n(\Q_p)}(\Pi(\rho)^{\text{ord}}, \hat{S}(K^p,E)_{\m}[\p_x]).
$$
Some of us hope to return to these questions on another occasion. 

Let us succinctly draw a comparison with \cite{BC09} which contains results of the same nature (see their section 7.4, p. 179): They consider an eigenvariety $X$ of 'idempotent type' and a finite set of places $S$ away from $p$. On $X$ they construct a sheaf $\Pi_S$ of admissible $G(\A_S)$-representations and study how the fibers $\Pi_{S,x}$ vary with $x \in X$. 
Each point $x$ has an associated Hecke eigensystem $\psi_x: \mathcal{H} \rightarrow \kappa(x)$ and one considers a certain generalized eigenspace $\mathcal{S}^{\psi_x}$ of $p$-adic automorphic forms; $\Pi_S^{\psi_x}$ is then the $G(\A_S)$-representation over $\OO_{X,x}/\m_{\omega(x)} \OO_{X,x}$ generated by $\mathcal{S}^{\psi_x}$. It is of finite length and has the interpolative property that there exists a surjection $\Pi_{S,x}/\m_{\omega(x)} \Pi_{S,x} \twoheadrightarrow \Pi_S^{\psi_x}$ for all points $x$. For classical points $x$ there is an inclusion $\Pi_{x}\hookrightarrow \Pi_S^{\psi_x}$, and this is an isomorphism if $x$ is (numerically) non-critical. (We will not recall all the notation used in this paragraph but refer the reader to \cite{BC09}.) A rough 'dictionary' between this paper and \cite{BC09} is  
$$
G(\A_S)\leftrightsquigarrow U(F_{\Sigma_0}^+) \y \y \y \y \y \Pi_{S,x} \leftrightsquigarrow \varinjlim_{K_{\Sigma_0}} \M_y' \y \y \y \y \y \Pi_S^{\psi_x} \leftrightsquigarrow \otimes _{v\in \Sigma_0}\pi_{x,v}.
$$
To summarize, the approach in \cite{BC09} is more geometric and via automorphic {\it{forms}} (akin to \cite{Pau11}) whereas our approach is to interpolate automorphic {\it{representations}} instead of Hecke eigensystems, adopting the definition of eigenvarieties in \cite{BHS16}.

We briefly outline the overall strategy behind the proof of Theorem \ref{main}: For classical points $y=(x,\delta)$ (i.e., those corresponding to automorphic representations) local-global compatibility away from $p$ essentially gives an inclusion $\otimes_{v\in \Sigma_0}\pi_{x,v}\hookrightarrow \varinjlim_{K_{\Sigma_0}} \M_y'$ which is an isomorphism if $\delta$ moreover is of 
non-critical slope. We reinterpret this using ideas from Scholze's proof of the local Langlands correspondence (\cite{Sc13}): He works with certain elements $f_{\tau}$ in the Bernstein center of $\GL_n(F_w)$, associated with $\tau \in W_{F_w}$, which act on an irreducible smooth representation $\Pi$ via scaling by $\tr(\tau|\rec(\Pi))$; here and throughout this paragraph we ignore a twist by $|\det|^{(1-n)/2}$ for simplicity. For each tuple $\tau=(\tau_{\tilde{v}})\in \prod_{v\in \Sigma_0} W_{F_{\tilde{v}}}$ we thus have an element 
$f_{\tau}:=\otimes_{v\in \Sigma_0} f_{\tau_{\tilde{v}}}$ of the Bernstein center of $U(F_{\Sigma_0}^+)\overset{\sim}{\longrightarrow} \prod_{v\in \Sigma_0} \GL_n(F_{\tilde{v}})$ which we know how to evaluate on all irreducible smooth representations. In particular $f_{\tau}$ acts on $\varinjlim_{K_{\Sigma_0}} \M_y'$ via scaling by 
$\prod_{v\in \Sigma_0}\tr \big(\tau_{\tilde{v}}|\rec(\BC_{\tilde{v}|v}(\pi_{x,v}))\big )$ -- still assuming $y$ is classical and non-critical. Those points are Zariski dense in $Y(K^p,\bar{r})$, and using this we interpolate this key scaling property to {\it{all}} points $y$ as follows. By mimicking the standard proof of Grothendieck's monodromy theorem one can interpolate $\WD(r_x|_{\Gal_{F_{\tilde{v}}}})$ in families. Namely, for each $\Sp(A) \subset X_{\bar{r}}$ we construct a Weil-Deligne representation $\WD_{\bar{r},\tilde{v}}$ over $A$ which specializes to $\WD(r_x|_{\Gal_{F_{\tilde{v}}}})$ for all $x \in \Sp(A)$. Around the point $y$ we find a neighborhood $\Omega\subset \Sp(A) \times \hat{T}$ and use the 
weight morphism $\omega: Y(K^p,\bar{r})\rightarrow \WW$, or rather its restriction $\omega|_{\Omega}$, to view $\Gamma(\Omega,\M)$ as a finite type projective module over $\OO_{\WW}(\omega(\Omega))$, which allows us to show that $f_{\tau}$ acts on $\varinjlim_{K_{\Sigma_0}} \Gamma(\Omega,\M)$
via scaling by $\prod_{v\in \Sigma_0} \tr(\tau_{\tilde{v}}|\WD_{\bar{r},\tilde{v}})$. This is the most technical part of our argument; in fact we glue and get the scaling property on the
sheaf $\M$ itself. By specialization at $y$ we deduce that $f_{\tau}$ acts on $\varinjlim_{K_{\Sigma_0}} \M_y'$ via scaling by $\prod_{v\in \Sigma_0}\tr \big(\tau_{\tilde{v}}|\rec(\BC_{\tilde{v}|v}(\pi_{x,v}))\big )$ as desired. This result tells us that every irreducible constituent $\otimes_{v\in \Sigma_0}\pi_v$ of $\varinjlim_{K_{\Sigma_0}} \M_y'$ has the same supercuspidal support as $\otimes_{v\in \Sigma_0}\pi_{x,v}$, and therefore is isomorphic to it if $x$ is a strongly generic point. We also infer that $\varinjlim_{K_{\Sigma_0}} \M_y'$ has finite length since $\dim \M_y' <\infty$ and the constituents 
$\otimes_{v\in \Sigma_0}\pi_v$ have conductors bounded by the conductors of $\WD(r_x|_{\Gal_{F_{\tilde{v}}}})$.

We finish with a few remarks on the structure of the paper. In our first (rather lengthy) section \ref{notation} we introduce in detail the notation and assumptions in force throughout; the
unitary groups $U_{/F^+}$, automorphic forms $\hat{S}(K^p,E)$, Hecke algebras, Galois representations and their deformations. Section \ref{ev} then defines the eigenvarieties 
$Y(K^p,\bar{r})$ and the sheaves $\M_{K^p}$, essentially following \cite{BHS16} and \cite{Em06}. In section \ref{noncrit} we recall the notion of a non-critical classical point, and prove Theorem \ref{main} for those. Section \ref{weildel} interpolates the Weil-Deligne representations across reduced $\Sp(A)\subset X_{\bar{r}}$ by suitably adapting Grothendieck's argument. We recall Scholze's characterization of the local Langlands correspondence in section \ref{llc}, and introduce the functions $f_{\tau}$ in the Bernstein center. The goal of section \ref{tr} is to show Proposition \ref{fam} on the action of $f_{\tau}$ on $\varinjlim_{K_{\Sigma_0}}\Gamma(\Omega,\M_{K^p})$ where $\Omega$ is a neighborhood of $y$ as above. Finally in section \ref{pf} we put the pieces together; we introduce the notion of a strongly generic point, and prove our main results. The last section \ref{iwahori} focuses on the case where $K_{\Sigma_0}$ is a product of Iwahori subgroups; we recall and use the genericity criterion of Chan-Savin to show the occurrence of $\otimes_{v\in \Sigma_0}\pi_{x,v}^{\text{gen}}$.

\medskip

\noindent {\it{Acknowledgements}}. We would like to thank Hauseux and D. Prasad for their assistance with $\Ext^i$, as well as Kaletha, Minguez, and Shin for helpful conversations and correspondence on the details of \cite{KMSW}. Many thanks are due to Savin for his thorough answers to our questions on types and genericity. 

\section{Notation and terminology}\label{notation}

We denote the absolute Galois group $\Gal(F^{\text{sep}}/F)$ of a field $F$ by $\Gal_F$.

\subsection{Unitary groups}\label{unit}

Our setup will be identical to that of \cite{BHS16} although we will adopt a slightly different notation, which we will introduce below. 

We fix a CM field $F$ with maximal totally real subfield $F^+$ and $\Gal(F/F^+)=\{1,c\}$. We assume the extension $F/F^+$ is unramified at all finite places, and split at all places $v|p$ of $F^+$ above a fixed prime $p$.

Let $n$ be a positive integer. If $n$ is even assume that $\frac{n}{2}[F^+:\Q]\equiv 0$ mod $2$. By \cite[3.5]{CHT08}
this guarantees the existence of a unitary group $U_{/F^+}$ in $n$ variables such that
\begin{itemize}
\item $U \times_{F^+} F \overset{\sim}{\longrightarrow} \GL_n$,
\item $U$ is quasi-split over $F_v^+$ (hence unramified) for all\footnote{Convenient in Lem. \ref{unique} when considering local base change from $U(F_v^+)$ to $\GL_n(F_{\tilde{v}})$ -- for unramified representations.} finite places $v$,
\item $U(F^+\otimes_{\Q}\R)$ is compact.
\end{itemize}
We let $G=\text{Res}_{F^+/\Q}U$ be its restriction of scalars. 

If $v$ splits in $F$ the choice of a divisor $w|v$ determines an isomorphism
$i_w: U(F_v^+) \overset{\sim}{\longrightarrow} \GL_n(F_w)$ well-defined up to conjugacy. Throughout we fix a finite set $\Sigma$ of finite places of $F^+$
such that every $v \in \Sigma$ splits in $F$, and $\Sigma$ contains $\Sigma_p=\{v:v|p\}$. We let $\Sigma_0=\Sigma \backslash \Sigma_p$. We emphasize that unlike \cite{CHT08}
we do {\it{not}} assume the places in $\Sigma_0$ are banal.

For each $v \in \Sigma$ we choose 
a divisor $\tilde{v}|v$ once and for all and let $\tilde{\Sigma}=\{\tilde{v}:v \in \Sigma\}$. We also choose an embedding $\Gal_{F_{\tilde{v}}}\hookrightarrow \Gal_F$ for each such $v$.
Moreover, we choose isomorphisms $i_{\tilde{v}}$ which we will tacitly use to identify 
$U(F_v^+)$ with $\GL_n(F_{\tilde{v}})$. For instance the collection $(i_{\tilde{v}})_{v|p}$ gives an isomorphism
\begin{equation}\label{Gid}
G(\Q_p)=U(F^+\otimes_{\Q}\Q_p) \overset{\sim}{\longrightarrow} \prod_{v|p} \GL_n(F_{\tilde{v}}).
\end{equation}
Similarly $U(F_{\Sigma}^+) \overset{\sim}{\longrightarrow} \prod_{v \in \Sigma }\GL_n(F_{\tilde{v}})$ and analogously for $U(F_{\Sigma_0}^+)$. When there is no risk of confusion we will just write $G$ instead of $G(\Q_p)$. We let $B\subset G$ be the inverse image of the upper-triangular matrices under (\ref{Gid}). In the same fashion $T$ corresponds to the diagonal matrices, and $N$ corresponds to the unipotent radical. Their opposites are denoted $\bar{B}$ and $\bar{N}$. 

Below we will only consider tame levels $K^p\subset G(\A_f^p)$ of the form $K^p=\prod_{v \nmid p} K_v$ where $K_v \subset U(F_v^+)$ is a compact open subgroup which is assumed to be hyperspecial for $v \notin \Sigma$. Accordingly we factor it as
$K^p=K_{\Sigma_0}K^{\Sigma}$ where $K^{\Sigma}=\prod_{v \notin \Sigma} K_v$ is a product of hyperspecials, and $K_{\Sigma_0}=\prod_{v \in \Sigma_0} K_v$. 

\subsection{Automorphic forms}\label{auto}

We work over a fixed finite extension $E/\Q_p$, which we assume is large enough in the sense that
every embedding $F_v^+\hookrightarrow \bar{\Q}_p$ factors through $E$ for all $v|p$. We let $\OO$ denote its valuation ring, $\varpi$ is a choice of uniformizer, and $k=\OO/(\varpi)\simeq \F_q$ is the residue field. We endow $E$ with its normalized absolute value $|\cdot|$ for which $|\varpi|=q^{-1}$. 

For a tame level $K^p\subset G(\A_f^p)$ we introduce the space of $p$-adic automorphic forms on $G(\A)$ as follows (cf. Definition 3.2.3 in \cite{Em06}).
First let
$$
\hat{S}(K^p,\OO)=\CC(G(\Q)\backslash G(\A_f)/K^p,\OO)=\varprojlim_i \CC^{\infty}(G(\Q)\backslash G(\A_f)/K^p,\OO/\varpi^i \OO).
$$
Here $\CC$ is the space of continuous functions, $\CC^{\infty}$ is the space of locally constant functions. Note that the space of locally constant functions in 
$\hat{S}(K^p,\OO)$ is $\varpi$-adically dense, so alternatively
$$
\hat{S}(K^p,\OO)=\CC^{\infty}(G(\Q)\backslash G(\A_f)/K^p,\OO)^{\wedge}=\varprojlim_i \CC^{\infty}(G(\Q)\backslash G(\A_f)/K^p,\OO)\otimes_{\OO} \OO/\varpi^i \OO.
$$
These two viewpoints amount to thinking of $\hat{S}(K^p,\OO)$ as $\tilde{H}^0(K^p)$ or $\hat{H}^0(K^p)$ respectively in the notation of \cite{Em06}, cf. (2.1.1) and Corollary 2.2.25 there. The reduction modulo $\varpi$ is the space of mod $p$ modular forms on $G(\A)$,
$$
S(K^p,k)=\CC^{\infty}(G(\Q)\backslash G(\A_f)/K^p,k) \simeq \hat{S}(K^p,\OO)/\varpi\hat{S}(K^p,\OO),
$$
which is an admissible (smooth) $k[G]$-module with $G=G(\Q_p)$ acting via right translations. Thus $\hat{S}(K^p,\OO)$ is a $\varpi$-adically admissible $G$-representation over $\OO$, i.e. an object of $\Mod_G^{\varpi-\adm}(\OO)$ (cf. Definition 2.4.7 in \cite{Em10}). Since it is clearly flat over $\OO$, it is the unit ball of a Banach representation
$$
\hat{S}(K^p,E)=\hat{S}(K^p,\OO)[1/p]=\CC(G(\Q)\backslash G(\A_f)/K^p,E).
$$
Here we equip the right-hand side with the supremum norm $\|f\|=\sup_{g \in G(\A_f)}|f(g)|$, and $\hat{S}(K^p,E)$ thus becomes an object of the category 
$\Ban_G(E)^{\leq 1}$ of Banach $E$-spaces $(H,\|\cdot\|)$ for which $\|H\|\subset |E|$ endowed with an isometric $G$-action. 
$\hat{S}(K^p,E)$ is dubbed the space of $p$-adic automorphic forms on $G(\A)$.

The connection to classical modular forms is through locally algebraic vectors as we now explain. Let $V$ be an absolutely irreducible algebraic representation of $G\times_{\Q} E$. Thus $V$ is a finite-dimensional $E$-vector space with an action of $G(E)$, which we restrict to $G(\Q_p)$. If $K_p\subset G(\Q_p)$ is a compact open subgroup we let it act on $V$ and consider
$$
S_V(K_pK^p,E)=\Hom_{K_p}(V, \hat{S}(K^p,E)).
$$
If we assume $E$ is large enough that $\End_G(V)=E$, the space of $V$-locally algebraic vectors in $\hat{S}(K^p,E)$ can be defined as 
the image of the natural map
$$
\varinjlim_{K_p}V \otimes_E S_V(K_pK^p,E) \overset{\sim}{\longrightarrow}  \hat{S}(K^p,E)^{V-\alg} \hookrightarrow \hat{S}(K^p,E)
$$
(cf. Proposition 4.2.4 in \cite{Em11}). Then the space of all locally algebraic vectors decomposes as a direct sum $\hat{S}(K^p,E)^{\alg}=\bigoplus_V \hat{S}(K^p,E)^{V-\alg}$. Letting $\tilde{V}$ denote the contragredient representation, one easily identifies $S_V(K_pK^p,E)$ with the space of (necessarily continuous) functions
$$
f: G(\Q)\backslash G(\A_f)/K^p \longrightarrow \tilde{V}, \y \y f(gk)=k^{-1}f(g) \y \y \forall k \in K_p.
$$
In turn, considering the function $h(g)=gf(g)$ identifies it with the space of right $K_pK^p$-invariant functions
$h:G(\A_f)\rightarrow \tilde{V}$ such that $h(\gamma g)=\gamma h(g)$ for all $\gamma \in G(\Q)$. If we complexify this space along an embedding 
$\iota: E \hookrightarrow \C$ we obtain vector-valued automorphic forms. Thus we arrive at the decomposition
\begin{equation}\label{aut}
S_V(K_pK^p,E)\otimes_{E,\iota}\C\simeq \bigoplus_{\pi} m_G(\pi) \cdot \pi_p^{K_p} \otimes (\pi_f^p)^{K^p}
\end{equation}
with $\pi$ running over automorphic representations of $G(\A)$ with $\pi_{\infty}\simeq V\otimes_{E,\iota}\C$. It is even known by now that all
$m_G(\pi)=1$, cf. \cite{Mok15} and 'the main global theorem' \cite[Thm. 1.7.1, p. 89]{KMSW} (both based on the symplectic/orthogonal case \cite{Art13}). Multiplicity one will be used below in Lemma \ref{unique}.

\begin{rem}\label{mult}
For full disclosure we will only use multiplicity one for representations $\pi$ whose base change $\Pi=\text{BC}_{F/F^+}(\pi)$ to $\GL_n(\A_F)$ is cuspidal (cf. the proof of Lemma \ref{unique} below). Since $\Pi_{\infty}$ is $V$-cohomological the Ramanujan conjecture holds in this case, i.e. $\Pi$ is tempered. Therefore the packets in \cite[Thm. 1.7.1]{KMSW}
do not overlap and consists of irreducible representations; in particular $m_G(\pi)=1$. Some of the authors of \cite{KMSW} have informed us that multiplicity one even holds for non-tempered representations $\pi$, the point being that the groups $S^\natural_{\psi_v}$ in loc. cit. are abelian. As mentioned in the introduction to loc. cit. the non-tempered case is the topic of a sequel. 
\end{rem}

\subsection{Hecke algebras}\label{hecke}

At each $v\nmid p$ we consider the Hecke algebra $\HH(U(F_v^+),K_v)$ of $K_v$-biinvariant compactly supported functions 
$\phi:U(F_v^+)\rightarrow \OO$ (with $K_v$-normalized convolution). The characteristic functions of double cosets $[K_v\gamma_vK_v]$ form an $\OO$-basis. 

Suppose $v$ splits in $F$ and $K_v$ is hyperspecial. Choose a place $w|v$ and an isomorphism $i_w$ which restricts to $i_w: K_v \overset{\sim}{\longrightarrow} \GL_n(\OO_{F_w})$. Then we identify $\HH(U(F_v^+),K_v)$ with the spherical Hecke algebra for $\GL_n(F_w)$. We let $\gamma_{w,j}\in U(F_v^+)$ denote the element corresponding to 
$$
i_w(\gamma_{w,j})=\diag(\underbrace{\varpi_{F_w},\ldots,\varpi_{F_w}}_{j},1,\ldots,1).
$$
Then let $T_{w,j}=[K_v\gamma_{w,j} K_v]$ be the standard Hecke operators; $\HH(U(F_v^+),K_v)=\OO[T_{w,1},\ldots, T_{w,n}^{\pm1}]$.

For a tame level $K^p$ as above, the full Hecke algebra
$$
\HH(G(\A_f^p),K^p)=\bigotimes_{v\nmid p} \HH(U(F_v^+),K_v)
$$
acts on $\hat{S}(K^p,E)$ by norm-decreasing morphisms, and hence preserves the unit ball $\hat{S}(K^p,\OO)$. This induces actions on $S(K^p,k)$ and $S_V(K_pK^p,E)$ as well
given by the usual double coset operators. Let
$$
\HH(K_{\Sigma_0})=\bigotimes_{v\in \Sigma_0} \HH(U(F_v^+),K_v), \y \y \y \HH_s(K^{\Sigma})={\bigotimes}_{\text{$v\notin \Sigma$ split}} \HH(U(F_v^+),K_v)
$$
be the subalgebras of $\HH(G(\A_f^p),K^p)$ generated by Hecke operators at $v \in \Sigma_0$, respectively $T_{w,1},\ldots, T_{w,n}^{\pm 1}$ for $v \notin \Sigma$ {\it{split}} in $F$ and $w|v$ (the subscript $s$ is for 'split'). In what follows we ignore the Hecke action at the non-split places $v \notin \Sigma$. Note that $\HH_s(K^{\Sigma})$ is commutative, but of course $\HH(K_{\Sigma_0})$ need not be. 

We define the Hecke polynomial $P_w(X)\in \HH_s(K^{\Sigma})[X]$ to be
$$
P_w(X)=X^n+\cdots+(-1)^j (Nw)^{j(j-1)/2}T_{w,j} X^{n-j}+\cdots+(-1)^n (Nw)^{n(n-1)/2}T_{w,n}
$$
where $Nw$ is the size of the residue field $\OO_{F_w}/(\varpi_{F_w})$.

We denote by $\T_V(K_pK^p,\OO)$ the subalgebra of $\End  \big(S_V(K_pK^p,E)\big)$ generated by the operators $\HH_s(K^{\Sigma})$. This is reduced and finite over $\OO$. In case $V$ is the trivial representation we write $\T_0(K_pK^p,\OO)$. As $K_p$ shrinks there are surjective transition maps between these (given by restriction) and we let 
$$
\hat{\T}(K^p,\OO)=\varprojlim_{K_p} \T_0(K_pK^p,\OO),
$$
equipped with the projective limit topology (each term being endowed with the $\varpi$-adic topology). We refer to it as the 'big' Hecke algebra. $\hat{\T}(K^p,\OO)$ clearly acts faithfully on $\hat{S}(K^p,E)$ and one can easily show that the natural map $\HH_s(K^{\Sigma}) \rightarrow \hat{\T}(K^p,\OO)$ has dense image, cf. the discussion in \cite[5.2]{Em11b}. 

A maximal ideal $\m \subset \HH_s(K^{\Sigma})$ is called automorphic (of tame level $K^p$) if it arises as the pullback of a maximal ideal in some $\T_V(K_pK^p,\OO)$.
Shrinking $K_p$ if necessary we may assume it is pro-$p$, in which case we may take $V$ to be trivial ('Shimura's principle'). In particular there are only finitely many such $\m$,
and we interchangeably view them as maximal ideals of $\hat{\T}(K^p,\OO)$ (and use the same notation), which thus factors as a finite product of complete local $\OO$-algebras
$$
\hat{\T}(K^p,\OO)=\prod_{\m} \hat{\T}(K^p,\OO)_{\m}.
$$
Correspondingly we have a decomposition $\hat{S}(K^p,E)=\bigoplus_{\m} \hat{S}(K^p,E)_{\m}$, and similarly for $\hat{S}(K^p,\OO)$.
This direct sum is clearly preserved by $\HH(K_{\Sigma_0})$.

\subsection{Galois representations}\label{gal}

If $R$ is an $\OO$-algebra, and $r:\Gal_F \rightarrow \GL_n(R)$ is an arbitrary representation which is unramified at all places $w$ of $F$ lying above a split $v \notin \Sigma$, we associate the eigensystem $\theta_r: \HH_s(K^{\Sigma}) \rightarrow R$ determined by
$$
\det(X-r(\Frob_w))=\theta_r(P_w(X)) \in R[X]
$$
for all such $w$. Here $\Frob_w$ denotes a geometric Frobenius. (Note that the coefficients of the polynomial determine $\theta_r(T_{w,j})$ since $Nw \in \OO^{\times}$; 
and $\theta_r(T_{w,n})\in R^{\times}$.) We say $r$ is automorphic (for $G$) if $\theta_r$ factors through one of the quotients $\T_V(K_pK^p,\OO)$. 

When $R=\OO$ this means $r$ is  
associated with one of the automorphic representations $\pi$ contributing to (\ref{aut}) in the sense that $T_{w,j}$ acts on $\pi_v^{K_v}$ by scaling by $\iota(\theta_r(T_{w,j}))$ for all $w|v \notin \Sigma$ as above. Conversely, it is now known that to any such $\pi$ (and a choice of isomorphism $\iota: \bar{\Q}_p \overset{\sim}{\longrightarrow} \C$) one can attach a unique semisimple Galois representation $r_{\pi,\iota}: \Gal_F \rightarrow \GL_n(\bar{\Q}_p)$ with that property, cf. \cite[Theorem 6.5]{Tho12} for a nice summary.
It is polarized, meaning that $r_{\pi,\iota}^{\vee}\simeq r_{\pi,\iota}^c \otimes \epsilon^{n-1}$ where $\epsilon$ is the cyclotomic character, and one can explicitly write down its Hodge-Tate weights in terms of $V$. 

When $R=k$ we let $\m_r=\ker(\theta_r)$ be the corresponding maximal ideal of $\HH_s(K^{\Sigma})$. Then $r$ is automorphic precisely when $\m_r$ is automorphic, in which case we tacitly view it as a maximal ideal of $\T_V(K_pK^p,\OO)$ (with residue field $k$) for suitable $V$ and $K_p$. In the other direction, starting from a maximal ideal 
$\m$ in $\T_V(K_pK^p,\OO)$ (whose residue field is necessarily a finite extension of $k$) one can attach a unique semisimple representation
$$
\bar{r}_{\m}:\Gal_F \longrightarrow \GL_n(\T_V(K_pK^p,\OO)/\m)
$$
such that $\theta_{\bar{r}_{\m}}(T_{w,j})=T_{w,j}+\m$ (and which is polarized), cf. \cite[Prop. 6.6]{Tho12}. We say $\m$ is non-Eisenstein if $\bar{r}_{\m}$ is absolutely irreducible.
Under this hypothesis $\bar{r}_{\m}$ admits a (polarized) lift 
$$
r_{\m}: \Gal_F \longrightarrow \GL_n(\T_V(K_pK^p,\OO)_{\m})
$$
with the property that $\theta_{r_{\m}}(T_{w,j})=T_{w,j}$; it is unique up to conjugation, cf. \cite[Prop. 6.7]{Tho12}, and gives a well-defined deformation of $\bar{r}_{\m}$. If we let $K_p$ shrink to a pro-$p$ subgroup we may take $V$ to be trivial, i.e. $\m\subset \T_{{\bf{1}}}(K_pK^p,\OO)$. Passing to the inverse limit yields a lift of $\bar{r}_{\m}$ with coefficients in $\hat{\T}(K^p,\OO)_{\m}$ which we will denote by $\hat{r}_{\m}$. Throughout \cite{Tho12} it is assumed that $p>2$; we adopt that hypothesis here.

All the representations discussed above ($r_{\pi,\iota}$, $\bar{r}_{\m}$, $r_{\m}$ etc.) extend\footnote{Once a choice of $\gamma_0 \in \Gal_{F^+}-\Gal_F$ is made, cf. \cite[Lem. 2.1.4]{CHT08}. See also Prop. 3.4.4 therein.} to continuous homomorphisms $\Gal_{F^+}\rightarrow \mathcal{G}_n(R)$ 
for various $R$, where $\mathcal{G}_n$ is the group scheme (over $\Z$) defined as a semi-direct product $\{1,j\}\ltimes (\GL_n \times \GL_1)$, cf. \cite[Def. 2.1]{Tho12}.
We let $\nu: \mathcal{G}_n\rightarrow \GL_1$ be the natural projection. Thus $\nu \circ \bar{r}_{\m}=\epsilon^{1-n}\delta_{F/F^+}^{\mu_{\m}}$ (and similarly for $r_{\m}$) where $\delta_{F/F^+}$ is the non-trivial quadratic character of $\Gal(F/F^+)$ and $\mu_{\m}\in \{0,1\}$ is determined by the congruence $\mu_{\m}\equiv n$ mod $2$ (cf. \cite[Thm. 3.5.1]{CHT08} and \cite[Thm. 1.2]{BC11}).

\subsection{Deformations}\label{def}

Now start with $\bar{r}: \Gal_{F^+}\rightarrow \mathcal{G}_n(k)$ such that its restriction $\bar{r}: \Gal_F \rightarrow \GL_n(k)$ is absolutely irreducible and automorphic, with corresponding maximal ideal $\m=\m_{\bar{r}}$, and $\nu \circ \bar{r}=\epsilon^{1-n}\delta_{F/F^+}^{\mu_{\m}}$. In particular $\bar{r}$ is unramified outside $\Sigma$.

We consider lifts and deformations of $\bar{r}$ to rings in $\mathcal{C}_{\OO}$, the category of complete local Noetherian $\OO$-algebras $R$ with residue field
$k \overset{\sim}{\longrightarrow} R/\m_R$, cf. \cite[Def. 3.1]{Tho12}. Recall that a lift is a homomorphism $r: \Gal_{F^+}\rightarrow \mathcal{G}_n(R)$ such that
$r$ reduces to $\bar{r}$ mod $\m_R$, and $\nu \circ r=\epsilon^{1-n}\delta_{F/F^+}^{\mu_{\m}}$ (thought of as taking values in $R^{\times}$). A deformation is a
$(1+M_n(R))$-conjugacy class of lifts.

For each $v \in \Sigma$ consider the restriction $\bar{r}_{\tilde{v}}=\bar{r}|_{\Gal_{F_{\tilde{v}}}}$ and its universal lifting ring $R_{\bar{r}_{\tilde{v}}}^{\square}$. Following \cite{Tho12} we let $R_{\bar{r}_{\tilde{v}}}^{\bar{\square}}$ denote its maximal reduced $p$-torsion free quotient, and consider the deformation problem
$$
\mathcal{S}=\big( F/F^+, \Sigma, \tilde{\Sigma}, \OO, \bar{r}, \epsilon^{1-n}\delta_{F/F^+}^{\mu_{\m}}, \{R_{\bar{r}_{\tilde{v}}}^{\bar{\square}}\}_{v\in \Sigma} \big).
$$
The functor $\text{Def}_{\mathcal{S}}$ of deformations of type $\mathcal{S}$ is then represented by an object $R_{\mathcal{S}}^{univ}$ of $\mathcal{C}_{\OO}$, cf. 
\cite[Prop. 3.4]{Tho12} or \cite[Prop. 2.2.9]{CHT08}. In what follows we will simply write $R_{\bar{r}}$ instead of $R_{\mathcal{S}}^{univ}$, and keep in mind the underlying deformation problem $\mathcal{S}$. Similarly, $R_{\bar{r}}^{\square}$ is the universal lifting ring of type $\mathcal{S}$ (which is denoted by $R_{\mathcal{S}}^{\square}$ in \cite[Prop. 3.4]{Tho12}). Note that $R_{\bar{r}}^{\square}$ is a power series $\OO$-algebra in $|\Sigma|n^2$ variables over $R_{\bar{r}}$ (\cite[Prop. 2.2.9]{CHT08}); a fact we will not use in this paper.

The universal automorphic deformation $r_{\m}$ is of type $\mathcal{S}$, so by universality it arises from a local homomorphism
$$
\psi: R_{\bar{r}}\longrightarrow \T_V(K_pK^p,\OO)_{\m}.
$$
These maps are compatible as we shrink $K_p$. Taking $V$ to be trivial and passing to the inverse limit over $K_p$ we obtain a map $\hat{\psi}: R_{\bar{r}} \rightarrow \hat{\T}(K^p,\OO)_{\m}$ which we use to view $\hat{S}(K^p,E)_{\m}$ as an $R_{\bar{r}}$-module.

\section{Eigenvarieties}\label{ev}

\subsection{Formal schemes and rigid spaces}

In what follows $(-)^{\rig}$ will denote Berthelot's functor (which generalizes Raynaud's construction for topologically finite type formal schemes $\frak{X}$ over $\Spf(\OO)$, cf. \cite{Ray74}). Its basic properties are nicely reviewed in 
\cite[Ch. 7]{dJ95}. The source $\text{FS}_{\OO}$ is the category of locally Noetherian adic formal schemes $\frak{X}$ which are formally of finite type over $\Spf(\OO)$ (i.e., their reduction modulo an ideal of definition is of finite type over $\Spec(k)$); the target $\text{Rig}_E$ is the category of rigid analytic varieties over $E$, cf. Definition 9.3.1/4 in \cite{BGR84}. For example,
$\B=(\Spf \OO\{y\})^{\rig}$ is the closed unit disc (at $0$); $\U=(\Spf \OO[[x]])^{\rig}$ is the open unit disc.  For a general affine formal scheme
$\frak{X}=\Spf(A)$ where 
$$
A=\OO\{y_1,\ldots,y_r\}[[x_1,\ldots,x_s]]/(g_1,\ldots,g_t),
$$ 
$\frak{X}^{\rig}\subset \B^r \times \U^s$ is the closed analytic subvariety cut out by the functions $g_1,\ldots,g_t$, cf. \cite[9.5.2]{BGR84}. In general $\frak{X}^{\rig}$ is obtained by glueing affine pieces as in \cite[7.2]{dJ95}. The construction of $\frak{X}^{\rig}$ in the affine case is actually completely canonical and free from coordinates: If $I \subset A$ is the largest ideal of definition, $A[I^n/\varpi]$ is the subring of $A \otimes_{\OO}E$ generated by $A$ and all $i/\varpi$ with $i \in I^n$. Let $A[I^n/\varpi]^{\wedge}$ be its $I$-adic completion
(equivalently, its $\varpi$-adic completion, see the proof of \cite[Lem. 7.1.2]{dJ95}). Then $A[I^n/\varpi]^{\wedge}\otimes_{\OO} E$ is an affinoid $E$-algebra and there is an admissible covering
$$
\frak{X}^{\rig}=\Spf(A)^{\rig}=\bigcup_{n=1}^{\infty} \text{Sp}(A[I^n/\varpi]^{\wedge}\otimes_{\OO} E).
$$
In particular $A^{\rig}:=\OO(\Spf(A)^{\rig})=\varprojlim_n A[I^n/\varpi]^{\wedge}\otimes_{\OO} E$. The natural map $A \otimes_{\OO} E \rightarrow A^{\rig}$ factors through the ring of bounded functions on $\Spf(A)^{\rig}$; the image of $A$ lies in $\OO^0(\Spf(A)^{\rig})$, the functions whose absolute value is bounded by $1$, cf. \cite[7.1.8]{dJ95}.

\subsection{Deformation space}\label{defspace}

We let $X_{\bar{r}}=\Spf(R_{\bar{r}})^{\rig}$ (a subvariety of $\U^s$ for some $s$). For a point $x \in X_{\bar{r}}$ we let $\kappa(x)$ denote its residue field, which is a finite extension of $E$, and let $\kappa(x)^0$ be its valuation ring; an $\OO$-algebra with finite residue field $k(x)$. Note the different meanings of $\kappa(x)$ and $k(x)$.
The evaluation map $R_{\bar{r}}\rightarrow \OO^0(X_{\bar{r}})\rightarrow \kappa(x)^0$
corresponds to a deformation
$$
r_x: \Gal_{F^+}\longrightarrow \mathcal{G}_n(\kappa(x)^0)
$$
of $\bar{r}\otimes_k k(x)$. (We tacitly choose a representative $r_x$ in the conjugacy class of lifts.) We let $\p_x=\ker(R_{\bar{r}}\rightarrow \kappa(x)^0)$ be the prime ideal of $R_{\bar{r}}$ corresponding to $x$, cf. the bijection in \cite[Lem. 7.1.9]{dJ95}.
We will often assume for notational simplicity that $x$ is $E$-rational, in which case $\kappa(x)=E$ and $k(x)=k$; so that $r_x$ is a deformation of $\bar{r}$ over $\kappa(x)^0=\OO$. 

\subsection{Character and weight space}\label{charspace}

Recall our choice of torus $T \subset G(\Q_p)$, and let $T_0$ be its maximal compact subgroup. Upon choosing uniformizers $\{\varpi_{F_{\tilde{v}}}\}_{v|p}$ we have an isomorphism
$T\simeq T_0 \times \Z^{n|\Sigma_p|}$ of topological groups. Moreover,
$$
T_0\simeq \prod_{v|p} (\OO_{F_{\tilde{v}}}^{\times})^n\simeq \big(\underbrace{\prod_{v|p}\mu_{\infty}(F_{\tilde{v}})^n}_{\mu} \big) \times \Z_p^{n[F^+:\Q]}.
$$
Let $\hat{T}:=\WW \times (\G_m^{\rig})^{n|\Sigma_p|}$ where $\WW:=\big(\Spf (\OO[[T_0]])\big)^{\rig}$. The weight space $\WW$ is isomorphic to $|\mu|$ copies of the open unit ball $\U^{n[F^+:\Q]}$. From a more functorial point of view $\hat{T}$ represents the functor which takes an affinoid $E$-algebra to 
the set $\Hom_{\text{cont}}(T,A^{\times})$, and similarly for $\WW$ and $T_0$. See \cite[Prop. 6.4.5]{Em11}. Thus $\hat{T}$ carries a universal continuous character 
$\delta^{\text{univ}}: T\rightarrow \OO(\hat{T})^{\times}$ which restricts to a character $T_0 \rightarrow \OO^0(\WW)^{\times}$ via the canonical morphism $\hat{T}\rightarrow \WW$.
Henceforth we identify points of $\hat{T}$ with continuous characters $\delta: T \rightarrow \kappa(\delta)^{\times}$ for varying finite extensions $\kappa(\delta)$ of $E$ (and analogously for $\WW$). 

\subsection{Definition of the eigenvariety}

We follow \cite[4.1]{BHS16} in defining the eigenvariety $Y(K^p,\bar{r})$ as the support of a certain coherent sheaf $\M=\M_{K^p}$ on $X_{\bar{r}}\times \hat{T}$. This is basically also the approach taken in section (2.3) of \cite{Em06}, except there $X_{\bar{r}}$ is replaced by $\Spec$ of a certain Hecke algebra. We define $\M$ as follows.

Let $(-)^{\an}$ be the functor from \cite[Thm. 7.1]{ST03}. It takes an object $H$ of $\Ban_G^{\adm}(E)$ to the dense subspace $H^{\an}$ of locally analytic vectors. $H^{an}$ is a locally analytic $G$-representation (over $E$) of compact type whose strong dual $(H^{an})'$ is a coadmissible $D(G,E)$-module, cf. \cite[p. 176]{ST03}.

We take $H=\hat{S}(K^p,E)_{\m}$ and arrive at an admissible locally analytic $G$-representation $\hat{S}(K^p,E)_{\m}^{\an}$ which we feed into the Jacquet functor $J_B$ defined in \cite[Def. 3.4.5]{Em06b}. By Theorem 0.5 of loc. cit. this yields an {\it{essentially}} admissible locally analytic $T$-representation $J_B(\hat{S}(K^p,E)_{\m}^{\an})$. See 
\cite[Def. 6.4.9]{Em11} for the notion of essentially admissible (the difference with admissibility lies in incorporating the action of the center $Z$, or rather viewing the strong dual as a module over $\OO(\hat{Z})\hat{\otimes} D(G,E)$). 

We recall \cite[Prop. 2.3.2]{Em06}: If $\FF$ is a coherent sheaf on $\hat{T}$, cf. \cite[Def. 9.4.3/1]{BGR84}, its global sections $\Gamma(\hat{T},\FF)$ is a coadmissible $\OO(\hat{T})$-module. Moreover, the functor $\FF \rightsquigarrow \Gamma(\hat{T},\FF)$ is an equivalence of categories (since $\hat{T}$ is quasi-Stein). Note that $\Gamma(\hat{T},\FF)$ and its strong dual both acquire a $T$-action via $\delta^{\text{univ}}$. Altogether the functor $\FF \rightsquigarrow \Gamma(\hat{T},\FF)'$ sets up an anti-equivalence of categories between coherent sheaves on $\hat{T}$ and essentially admissible locally analytic $T$-representations (over $E$). 

As pointed out at the end of section \ref{def}, $\hat{S}(K^p,E)_{\m}$ is an $R_{\bar{r}}$-module via $\hat{\psi}$, and the $G$-action is clearly $R_{\bar{r}}$-linear. Thus 
$J_B(\hat{S}(K^p,E)_{\m}^{\an})$ inherits an $R_{\bar{r}}$-module structure. By suitably modifying the remarks of the preceding paragraph (as in section 3.1 of \cite{BHS16} where they define and study locally $R_{\bar{r}}$-analytic vectors, cf. Def. 3.2 in loc. cit.) one finds that there is a coherent sheaf $\M=\M_{K^p}$ on $X_{\bar{r}}\times \hat{T}$ for which
$$
J_B(\hat{S}(K^p,E)_{\m}^{\an})\simeq \Gamma(X_{\bar{r}}\times \hat{T},\M)'.
$$ 
The {\it{eigenvariety}} is then defined as the (schematic) support of $\M$, cf. \cite[Prop. 9.5.2/4]{BGR84}. I.e.,
$$
Y(K^p,\bar{r}):=\text{supp} (\M)=\{y=(x,\delta): \M_y\neq 0\} \subset X_{\bar{r}}\times \hat{T}.
$$
Thus $Y(K^p,\bar{r})$ is an analytic subset of $X_{\bar{r}}\times \hat{T}$ with structure sheaf $\OO_{X_{\bar{r}}\times \hat{T}}/\mathcal{I}$, where $\mathcal{I}$ is the ideal sheaf of annihilators of $\M$. That is $\mathcal{I}(U)=\text{Ann}_{\OO(U)}\Gamma(U,\M)$ for admissible open $U$. One can show that $Y(K^p,\bar{r})$ is reduced, cf. part (3) of Lemma \ref{ingredients} below for precise references.

The fiber $\M_y=\big(\varinjlim_{U \ni y} \Gamma(U,\M)\big)\otimes_{\OO_{Y(K^p,\bar{r}),y}}\kappa(y)$ 
is finite-dimensional over $\kappa(y)$. Suppose $\kappa(y)\simeq E$ solely to simplify the notation. Then the full $E$-linear dual 
$\M_y'=\Hom_E(\M_y,E)$ has the following useful description.

\begin{lem}\label{fib}
Let $y=(x,\delta)\in (X_{\bar{r}}\times \hat{T})(E)$ be an $E$-rational point. Then there is an isomorphism
\begin{equation}\label{fiber}
\M_y'\simeq J_B^{\delta}(\hat{S}(K^p,E)_{\m}[\p_x]^{\an}).
\end{equation}
(Here $J_B^{\delta}$ means the $\delta$-eigenspace of $J_B$, and $[\p_x]$ means taking $\p_x$-torsion.)
\end{lem}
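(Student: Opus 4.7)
The plan is to compute $\M_y'$ by unwinding the anti-equivalence between coherent sheaves on $X_{\bar{r}}\times\hat{T}$ and essentially admissible locally analytic $T$-representations with compatible $R_{\bar{r}}$-action---under which $\M$ corresponds to $J_B(\hat{S}(K^p,E)_{\m}^{\an})$---and translating the fiber at $y=(x,\delta)$ into an algebraic operation on that representation.

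First, I would establish the general fact that for any coherent sheaf $\FF$ on the quasi-Stein space $X_{\bar{r}}\times\hat{T}$ and any $E$-rational point $z$ with maximal ideal $\m_z\subset\OO(X_{\bar{r}}\times\hat{T})$, there is a canonical isomorphism
$$
\FF_z' \;\cong\; \Gamma(X_{\bar{r}}\times\hat{T},\FF)'[\m_z].
$$
Indeed, on any affinoid neighborhood $\Sp(A)\ni z$ coherence makes $\FF(\Sp(A))$ finite over $A$, and the elementary identity $\Hom_E(N/\m_z N, E)\cong \Hom_A(N,A/\m_z)$ for finite $A$-modules $N$ gives this locally; compatibility with strong duals on the global quasi-Stein side follows from the anti-equivalence recalled above, extended to carry the $R_{\bar{r}}$-action as in \cite[Sect.~3.1]{BHS16}. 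Applied to $\FF=\M$ this yields $\M_y' \cong J_B(\hat{S}(K^p,E)_{\m}^{\an})[\m_y]$.

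Next I would split $\m_y$ according to the product structure: it is generated by $\p_x\subset R_{\bar{r}}$ (acting via $\hat{\psi}$) together with the ideal $I_\delta\subset\OO(\hat{T})$ cutting out $\delta$, through which $T$ acts via $\delta^{\text{univ}}$. Passage to $I_\delta$-torsion coincides, by construction of $\delta^{\text{univ}}$, with extracting the $\delta$-eigenspace $J_B^\delta$. For the $\p_x$-torsion, the subspace $\hat{S}(K^p,E)_{\m}[\p_x]$ is closed and $G$-stable---being an intersection of kernels of continuous $R_{\bar{r}}$-linear endomorphisms---so taking locally analytic vectors commutes with $\p_x$-torsion: $\hat{S}(K^p,E)_{\m}[\p_x]^{\an}=\hat{S}(K^p,E)_{\m}^{\an}[\p_x]$. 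Since $J_B$ is left exact and $R_{\bar{r}}$-linear (the $R_{\bar{r}}$-action commutes with $G$), it too commutes with $\p_x$-torsion. Combining these operations gives (\ref{fiber}).

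The main obstacle is the first step: carefully matching the fiber of the coherent sheaf with $\m_y$-torsion on the representation side while simultaneously keeping track of both the $\OO(\hat{T})$- and $R_{\bar{r}}$-module structures under the topological duality. Once this dictionary is in place, the remaining steps are essentially formal consequences of left exactness and the definition of $\delta^{\text{univ}}$.
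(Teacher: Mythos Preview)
Your proposal is correct and follows essentially the same route as the paper's own proof: identify $\M_y$ as the maximal quotient of global sections on which $R_{\bar r}$ acts through $\p_x$ and $T$ acts through $\delta$ (using the quasi-Stein property), dualize to get $\M_y'\simeq J_B^{\delta}(\hat S(K^p,E)_{\m}^{\an})[\p_x]$, and then commute the $\p_x$-torsion inside past $(-)^{\an}$ (exact) and $J_B$ (left exact), reducing to the principal case by induction on generators of $\p_x$. The paper cites \cite[Prop.~2.3.3(iii)]{Em06} and \cite[Prop.~3.7]{BHS16} for these two steps respectively, which is exactly what you are unpacking.
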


\begin{proof}
First, since $X_{\bar{r}}\times \hat{T}$ is quasi-Stein, $\M_y$ is the largest quotient of $\Gamma(X_{\bar{r}}\times \hat{T},\M)$ which is annihilated by $\p_x$ and on which $T$ acts via $\delta$, cf. \cite[5.4]{BHS16}. Thus $\M_y'$ is the largest subspace of $J_B(\hat{S}(K^p,E)_{\m}^{\an})$ with the same properties, i.e. $J_B^{\delta}(\hat{S}(K^p,E)_{\m}^{\an})[\p_x]$, as observed in Proposition 2.3.3 (iii) of \cite{Em06}. Now,
$$
J_B^{\delta}(\hat{S}(K^p,E)_{\m}^{\an})[\p_x]=J_B^{\delta}(\hat{S}(K^p,E)_{\m}[\p_x]^{\an})
$$
as follows easily from the exactness of $(-)^{\an}$ and the left-exactness of $J_B$ (using that $\p_x$ is finitely generated to reduce to the principal case by induction on the number of generators), cf. the proof of \cite[Prop. 3.7]{BHS16}.
\end{proof}

The space in (\ref{fiber}) can be made more explicit: Choose a compact open subgroup $N_0\subset N$ and introduce the monoid $T^+=\{t \in T: tN_0t^{-1}\subset N_0\}$. Then by \cite[Prop. 3.4.9]{Em06b},
$$
J_B^{\delta}(\hat{S}(K^p,E)_{\m}[\p_x]^{\an}) \simeq \big(\hat{S}(K^p,E)_{\m}[\p_x]^{\an}\big)^{N_0,T^+=\delta}
$$
where $T^+$ acts by double coset operators $[N_0tN_0]$ on the space on the right. Observe that $y$ lies on the eigenvariety $Y(K^p,\bar{r})$ precisely when the above space $\M_y'$ is nonzero.

Note that the Hecke algebra $\HH(K_{\Sigma_0})$ acts on $J_B(\hat{S}(K^p,E)_{\m}^{\an})$, and therefore on $\M$ and its fibers $\M_y$ (on the {\it{right}} since we are taking duals). The isomorphism (\ref{fiber}) is 
$\HH(K_{\Sigma_0})$-equivariant, and our first goal is to describe $\M_y'$ as a $\HH(K_{\Sigma_0})$-module.

\subsection{Classical points}

We say that a point $y=(x,\delta)\in Y(K^p,\bar{r})(E)$ is {\it{classical}} (of weight $V$) if the following conditions hold (cf. \cite[Def. 3.14]{BHS16} or the paragraph before \cite[Def. 0.6]{Em06}):

\begin{itemize}
\item[(1)] $\delta=\delta_{\alg}\delta_{\sm}$, where $\delta_{\alg}$ is an algebraic character which is dominant relative to $B$ (i.e., obtained from an element of $X^*(T\times_{\Q}E)^+$ by restriction to $T$), and $\delta_{\sm}$ is a smooth character of $T$. In this case let $V$ denote the irreducible algebraic representation of $G \times_{\Q} E$ of highest weight $\delta_{\alg}$.  
\item[(2)] There exists an automorphic representation $\pi$ of $G(\A)$ such that
\begin{itemize}
\item[(a)] $(\pi_f^p)^{K^p}\neq 0$ and the $\HH_s(K^{\Sigma})$-action on this space is given by the eigensystem $\iota\circ \theta_{r_x}$,
\item[(b)] $\pi_{\infty}\simeq V \otimes_{E,\iota}\C$,
\item[(c)] $\pi_p$ is a quotient of $\Ind_{\bar{B}}^G(\delta_{\sm}\delta_B^{-1})$.
\end{itemize}
\end{itemize}
These points comprise the subset $Y(K^p,\bar{r})_{\cl}$. Note that condition (a) is equivalent to the isomorphism $r_x \simeq r_{\pi,\iota}$ (both sides are irreducible since $r_x$ is a lift of $\bar{r}$). In (c) $\delta_B$ denotes the modulus character of $B$; the reason we include it in condition (c) will become apparent in the proof of Prop. \ref{ncr} below. 

\begin{lem}\label{unique}
There is at most one automorphic $\pi$ satisfying (a)--(c) above; and $m_G(\pi)=1$.
\end{lem}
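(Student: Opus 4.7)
The plan is to use the Galois representation $r_x$ to identify $\pi$ uniquely, and then to obtain $m_G(\pi)=1$ from the multiplicity formula recalled in Remark \ref{mult}. First I would observe that since $r_x$ is a lift of the absolutely irreducible representation $\bar{r}$, it is itself absolutely irreducible; combined with condition (a) this forces $r_{\pi,\iota}\simeq r_x$ (both sides are semisimple and share characteristic polynomials of $\Frob_w$ at the unramified split places, so Chebotarev gives the isomorphism). Consequently the base change $\Pi:=\BC_{F/F^+}(\pi)$ is a cuspidal automorphic representation of $\GL_n(\A_F)$, since cuspidality of the base change is equivalent to the irreducibility of the associated Galois representation.

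Given another $\pi'$ satisfying (a)--(c), the same argument produces a cuspidal $\Pi':=\BC_{F/F^+}(\pi')$ sharing all Satake parameters with $\Pi$ at the split hyperspecial places. Strong multiplicity one for $\GL_n$ then gives $\Pi\simeq\Pi'$, so the next step is to descend back to the unitary group place by place to recover $\pi$. At a finite place $v$ of $F^+$ that splits as $\tilde{v}\tilde{v}^c$, base change is just the identification $U(F_v^+)\overset{\sim}{\longrightarrow}\GL_n(F_{\tilde{v}})$, so $\pi_v$ is determined by $\Pi_{\tilde{v}}$. At a finite inert place $v$, necessarily $v\notin\Sigma$ (since every place of $\Sigma$ splits in $F$), whence $K_v$ is hyperspecial, $\pi_v^{K_v}\neq 0$, and $\pi_v$ is the unique unramified member of the $L$-packet corresponding to the unramified parameter of $\Pi_v$. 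At infinity, condition (b) together with the fact that $U(F^+\otimes_{\Q}\R)$ is compact pins down $\pi_\infty\simeq V\otimes_{E,\iota}\C$ as an irreducible finite-dimensional representation. Combining these identifications yields $\pi\simeq\pi'$.

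For the multiplicity one assertion $m_G(\pi)=1$, I would appeal directly to Remark \ref{mult}: condition (b) makes $\Pi$ $V$-cohomological, and being cuspidal, self-dual up to a twist, and cohomological, it is tempered by the Ramanujan conjecture (known in this setting), so the tempered packets of \cite[Thm. 1.7.1]{KMSW} do not overlap and consist of irreducible representations; the global multiplicity formula applied to the resulting parameter then yields $m_G(\pi)=1$, with the abelian component groups $S^\natural_{\psi_v}$ ensuring that no extra contributions appear.

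The main obstacle, such as it is, is really bookkeeping around the descent at inert places: one must know that the $L$-packet for $\Pi_v$ at an inert $v\notin\Sigma$ contains at most one unramified representation (standard from the Satake classification and the unramified local Langlands correspondence for the quasi-split group $U_{/F_v^+}$) and that the descent of a cuspidal tempered $\Pi$ to the unitary group is nonempty and compatible with the prescribed local choices. Given the quasi-split hypothesis on $U$ at all finite places and the global framework of \cite{Mok15} and \cite{KMSW}, all of this is routine; the substantive input is strong multiplicity one for $\GL_n$ together with the absolute irreducibility of $r_x$ forced by the irreducibility of $\bar{r}$.
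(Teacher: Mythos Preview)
Your argument is correct and follows essentially the same strategy as the paper: base change to $\GL_n(\A_F)$, use irreducibility of $r_x$ (hence of $r_{\pi,\iota}$) to get cuspidality of $\Pi$, pin $\Pi$ down uniquely, then descend place by place using that split places give a bijective base change and inert places outside $\Sigma$ are unramified. The only tactical difference is in how $\Pi$ is pinned down: the paper invokes full local-global compatibility (at all finite $w$, including $w\mid p$) to read off each $\Pi_w$ directly from $r_x|_{\Gal_{F_w}}$, whereas you compare two candidates $\Pi,\Pi'$ via strong multiplicity one for $\GL_n$ at the split hyperspecial places. Your route is slightly lighter on input (no local-global at $p$ or at ramified places is needed), while the paper's route has the side benefit of actually identifying each $\Pi_w$ in terms of $r_x$, which is used elsewhere (e.g.\ in establishing that $\pi_{x,v}\otimes_{E,\iota}\C\simeq\pi_v$). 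For the descent at inert $v$, the paper cites \cite[Cor.~4.2]{Min11} for injectivity of unramified local base change, which amounts to your observation that the $L$-packet has a unique unramified member.
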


\begin{proof}
Let $\Pi=\BC_{F/F^+}(\pi)$ be a (strong) base change of $\pi$ to $\GL_n(\A_F)$, where we view $\pi$ as a representation of $U(\A_{F^+})=G(\A)$. For its existence see \cite[5.3]{Lab11}.
Note that $\Pi$ is cuspidal since $r_{\pi,\iota}$
is irreducible. In particular  $\Pi$ is globally generic, hence locally generic. By local-global compatibility, cf. \cite{BGGT}, \cite{BGGT2}, and \cite{Car14} for places $w|p$; \cite{TY07} and
\cite{Shi11} for places $w \nmid p$,
$$
\iota\WD(r_{\pi,\iota}|_{\Gal_{F_{w}}})^{F-ss}\simeq \rec(\Pi_w \otimes |\det|^{(1-n)/2})
$$
for {\it{all}} finite places $w$ of $F$, with the local Langlands correspondence $\rec(\cdot)$ normalized as in \cite{HT01}. This shows that $\Pi_w$ is completely determined by $r_x$ at all finite places $w$. Moreover, we have $\Pi_w=\BC_{w|v}(\pi_v)$ whenever the local base change on the right is defined, i.e. when either $v$ splits or $\pi_v$ is unramified. 
Our assumption that $\Sigma$ consists of split places guarantees that $\BC_{w|v}(\pi_v)$ makes sense locally everywhere. Furthermore, unramified local base change is injective according to \cite[Cor. 4.2]{Min11}. We conclude that $\pi_f$ is determined by $r_x$, and $\pi_{\infty}\simeq V \otimes_{E,\iota}\C$. Thus $\pi$ is unique. Multiplicity one was noted earlier at the end of section \ref{auto} above, cf. Remark \ref{mult}.
\end{proof}

\section{The case of classical points of non-critical slope}\label{noncrit}

Each point $x \in X_{\bar{r}}$ carries a Galois representation $r_x: \Gal_F \rightarrow \GL_n(\kappa(x))$ which we restrict to the various decomposition groups
$\Gal_{F_{\tilde{v}}}$ for $v \in \Sigma$. When $v \in \Sigma_0$ there is a corresponding Weil-Deligne representation, cf. section (4.2) in \cite{Tat79}, and we let
$\pi_{x,v}$ be the representation of $U(F_v^+)$ (over $\kappa(x)$) such that
\begin{equation}\label{pix}
\WD(r_x|_{\Gal_{F_{\tilde{v}}}})^{F-ss}\simeq \rec(\BC_{\tilde{v}|v}(\pi_{x,v}) \otimes |\det|^{(1-n)/2})
\end{equation}
Note that the local base change $\BC_{\tilde{v}|v}(\pi_{x,v})$ is just $\pi_{x,v}$ thought of as a representation of $\GL_n(F_{\tilde{v}})$ via the isomorphism 
$i_{\tilde{v}}: U(F_v^+) \overset{\sim}{\longrightarrow} \GL_n(F_{\tilde{v}})$. We emphasize that $\pi_{x,v}$ is defined even for {\it{non}}-classical points on the eigenvariety. 
If $y=(x,\delta)$ happens to be classical, $\pi_{x,v}\otimes_{E,\iota}\C \simeq \pi_v$ where $\pi$ is the automorphic representation in Lemma \ref{unique}. Below we relate 
$\otimes_{v\in \Sigma_0} \pi_{x,v}$ to the fiber $\M_y'$.

\begin{prop}\label{ncr}
Let $y=(x,\delta)\in Y(K^p,\bar{r})(E)$ be a classical point. Then there exists an embedding of $\HH(K_{\Sigma_0})$-modules
${\bigotimes}_{v\in \Sigma_0}\pi_{x,v}^{K_v} \hookrightarrow \M_y'$ which is an isomorphism if $\delta$ is of non-critical slope, cf. \cite[Def. 4.4.3]{Em06b} (which is summarized below).

\end{prop}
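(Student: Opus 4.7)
By Lemma \ref{fib}, the goal is to construct an $\HH(K_{\Sigma_0})$-equivariant embedding $\bigotimes_{v\in\Sigma_0}\pi_{x,v}^{K_v}\hookrightarrow J_B^{\delta}(\hat{S}(K^p,E)_{\m}[\p_x]^{\an})$ and to verify its surjectivity when $\delta$ has non-critical slope. The plan is three-stage: (i) use classicality to produce a locally algebraic copy of the automorphic representation $\pi$ of Lemma \ref{unique} inside $\hat{S}(K^p,E)_{\m}[\p_x]$; (ii) apply $J_B^{\delta}$ and read off the off-$p$ factor; (iii) invoke Emerton's non-critical slope theorem for the surjectivity. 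For (i), conditions (2)(a)--(b), multiplicity one $m_G(\pi)=1$ from Remark \ref{mult}, and descent of (\ref{aut}) from $\C$ to $E$ along $\iota$ (valid because $\theta_{r_x}$ is $E$-rational, $r_x$ being defined over $\OO$) yield a $G$-equivariant embedding of locally algebraic representations
\[
V\otimes_E\pi_p\otimes_E (\pi_f^p)^{K^p}\hookrightarrow \hat{S}(K^p,E)_{\m}[\p_x]^{V-\alg}.
\]

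For (ii), since $V$-algebraic vectors sit inside locally analytic vectors and $J_B$ is left-exact, applying $J_B^{\delta}$ gives
\[
J_B^{\delta}(V\otimes\pi_p)\otimes_E (\pi_f^p)^{K^p}\hookrightarrow J_B^{\delta}(\hat{S}(K^p,E)_{\m}[\p_x]^{\an})\simeq \M_y'.
\]
By Emerton's computation of $J_B$ on locally algebraic representations (\cite[Prop.~4.3.6]{Em06b}), the leftmost factor collapses to the smooth Jacquet eigenspace $J_B^{\delta_{\sm}}(\pi_p)$, the algebraic component $\delta_{\alg}$ being accounted for by the highest-weight line of $V$ (whose highest weight is $\delta_{\alg}$ by the definition of a classical point). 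The $\delta_B^{-1}$ twist built into condition (2)(c) --- the point flagged in the statement of the proposition --- is precisely what reconciles Emerton's normalization of $J_B$ on smooth representations with unnormalized induction, so that $\pi_p$ being a quotient of $\Ind_{\bar{B}}^G(\delta_{\sm}\delta_B^{-1})$ forces $J_B^{\delta_{\sm}}(\pi_p)\neq 0$. Pick any nonzero $\xi$ there; write $(\pi_f^p)^{K^p}$ as $\bigotimes_{v\in\Sigma_0}\pi_v^{K_v}$ tensored with the spherical Hecke eigenline at the hyperspecial places (one-dimensional and pinned down by $\theta_{r_x}$); and identify $\pi_v\simeq\pi_{x,v}$ for $v\in\Sigma_0$ via (\ref{pix}) and the local-global compatibility at finite split places used in the proof of Lemma \ref{unique}. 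This yields the desired $\HH(K_{\Sigma_0})$-equivariant embedding.

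For (iii), invoke Emerton's non-critical slope criterion for $J_B$ (\cite[Prop.~4.4.5]{Em06b}): if $\delta$ has non-critical slope, then $J_B^{\delta}(H^{\alg})\hookrightarrow J_B^{\delta}(H)$ is an isomorphism for every admissible locally analytic $G$-representation $H$. Applied to $H=\hat{S}(K^p,E)_{\m}[\p_x]^{\an}$, and using Lemma \ref{unique} and (\ref{aut}) to identify the $V$-locally algebraic part as $V\otimes\pi_p\otimes(\pi_f^p)^{K^p}$ (the $V'$-algebraic pieces for $V'\neq V$ contributing algebraic Jacquet characters $\neq\delta_{\alg}$, hence disjoint from $J_B^{\delta}$), the embedding from (ii) becomes an isomorphism as soon as $\dim_E J_B^{\delta_{\sm}}(\pi_p)=1$, which is the refinement information packaged into the non-critical slope hypothesis. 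The main technical hurdle is the normalization bookkeeping in (ii): lining up Emerton's convention for $J_B$ on smooth representations with the $\delta_B^{-1}$ twist in (2)(c) so that the smooth Jacquet eigenspace is genuinely nonzero and, in the non-critical case, one-dimensional. Once these normalizations are squared away, the embedding and its upgrade to an isomorphism follow formally from (\ref{aut}), left-exactness of $J_B$, and Emerton's classicality theorem.
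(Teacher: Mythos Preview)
Your overall strategy matches the paper's: identify the $V$-locally algebraic vectors via (\ref{aut}) and Lemma \ref{unique}, compute $J_B^{\delta}$ on them using \cite[Prop.~4.3.6]{Em06b}, and then invoke \cite[Thm.~4.4.5]{Em06b} in the non-critical case. The embedding in stages (i)--(ii) is correctly argued.

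There is, however, a genuine gap in stage (iii). You write that the embedding becomes an isomorphism ``as soon as $\dim_E J_B^{\delta_{\sm}}(\pi_p)=1$, which is the refinement information packaged into the non-critical slope hypothesis.'' This is not correct: the non-critical slope condition is a purely numerical constraint relating $\delta_{\alg}$ and $\text{slp}(\delta_{\sm})$, and it says nothing about the multiplicity of $\delta_{\sm}$ in the smooth Jacquet module $(\pi_p)_N$. What non-critical slope buys you (via \cite[Thm.~4.4.5]{Em06b}, which also requires a $G$-invariant norm on $H$ --- a hypothesis you omit but which is satisfied here by the sup norm) is that $J_B^{\delta}$ of the $V$-algebraic vectors already exhausts $J_B^{\delta}$ of the full locally analytic representation. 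But that still leaves you with $J_B^{\delta_{\sm}}(\pi_p)\otimes \bigotimes_{v\in\Sigma_0}\pi_{x,v}^{K_v}$ on the source side, and if the first factor were more than one-dimensional your chosen $\xi$ would not hit everything.

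The paper establishes $\dim_E (\pi_{x,p})_N^{T=\delta_{\sm}}=1$ \emph{unconditionally}, by a genericity argument: Bernstein's second adjointness identifies this eigenspace with $\Hom_G\big(\Ind_{\bar{B}}^G(\delta_{\sm}\delta_B^{-1}),\pi_{x,p}\big)$, and the latter is a line because the principal series has a unique generic constituent, namely $\pi_{x,p}$ itself (cuspidality of the base change forces $\pi_{x,p}$ to be generic, cf.\ the proof of Lemma \ref{unique}), occurring with multiplicity one by the theory of Bernstein--Zelevinsky derivatives. This is the missing ingredient; once you insert it, your argument and the paper's coincide.
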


\begin{proof}
According to (0.14) in \cite{Em06b} there is a closed embedding
$$
J_B\big((\hat{S}(K^p,E)_{\m}[\p_x]^{\an})^{V-\alg}\big)\hookrightarrow J_B\big(\hat{S}(K^p,E)_{\m}[\p_x]^{\an} \big)^{V^N-\alg}.
$$
Note that $V^N\simeq \delta_{\alg}$ so after passing to $\delta$-eigenspaces we get a closed embedding
\begin{equation}\label{emb}
J_B^{\delta}\big((\hat{S}(K^p,E)_{\m}[\p_x]^{\an})^{V-\alg}\big)\hookrightarrow J_B^{\delta}\big(\hat{S}(K^p,E)_{\m}[\p_x]^{\an} \big).
\end{equation}
The target is exactly $\M_y'$ by (\ref{fiber}). On the other hand
$$
(\hat{S}(K^p,E)_{\m}[\p_x]^{\an})^{V-\alg}\simeq \bigoplus_{\pi} (V\otimes_E \pi_p) \otimes_E (\pi_f^p)^{K^p}
$$
with $\pi$ running over automorphic representations of $G(\A)$ over $E$ with $\pi_{\infty}\simeq V$ and such that $\theta_{r_x}$ gives the action of 
$\HH_s(K^{\Sigma})$ on $(\pi_f^p)^{K^p}$. As noted in Lemma \ref{unique} there is precisely one such $\pi$ which we will denote by $\pi_x$ throughout this proof (consistent with the notation $\pi_{x,v}$ introduced above). Note that $\otimes_{v\notin \Sigma}\pi_{x,v}^{K_v}$ is a line so
$$
(\hat{S}(K^p,E)_{\m}[\p_x]^{\an})^{V-\alg}\simeq (V\otimes_E \pi_{x,p})\otimes_E \big({\bigotimes}_{v\in \Sigma_0}\pi_{x,v}^{K_v}\big).
$$
Since $J_B$ is compatible with the classical Jacquet functor, cf. \cite[Prop. 4.3.6]{Em06b}, we identify the source of (\ref{emb}) with
$$
(V^N\otimes_E (\pi_{x,p})_N)^{T=\delta}\otimes_E \big({\bigotimes}_{v\in \Sigma_0}\pi_{x,v}^{K_v}\big).
$$
Now $V^N\simeq \delta_{\alg}$ is one-dimensional, and so is $(\pi_{x,p})_N^{T=\delta_{\sm}}$. Indeed, by Bernstein second adjointness, 
$$
(\pi_{x,p})_N^{T=\delta_{\sm}}\simeq \Hom_G\big(\Ind_{\bar{B}}^G (\delta_{\sm}\delta_B^{-1}), \pi_{x,p} \big).
$$
The right-hand side is nonzero by condition (c) above, and in fact it is a line since $\Ind_{\bar{B}}^G (\delta_{\sm}\delta_B^{-1})$ has a unique generic constituent 
(namely $\pi_{x,p}$, cf. the proof of Lemma \ref{unique}) which occurs with multiplicity one; this follows from the theory of derivatives \cite[Ch. 4]{BZ77}. From this observation we immediately infer that $\Hom_G\big(\tilde{\pi}_{x,p}, \Ind_{\bar{B}}^G (\delta_{\sm}^{-1}\delta_B)\big)$ is one-dimensional. To summarize, (\ref{emb}) is an embedding ${\bigotimes}_{v\in \Sigma_0}\pi_{x,v}^{K_v} \hookrightarrow \M_y'$. Finally, since $\hat{S}(K^p,E)_{\m}[\p_x]^{\an}$ clearly admits a $G$-invariant norm (the sup norm), Theorem 4.4.5 in \cite{Em06b} tells us that (\ref{emb}) is an isomorphism if $\delta$ is of non-critical slope. \end{proof}

To aid the reader we briefly recall the notion of non-critical slope: To each $\delta \in \hat{T}(E)$ we assign the element 
$\text{slp}(\delta) \in X^*(T \times_{\Q} E)$ defined as follows, cf. \cite[Def. 1.4.2]{Em06b}. First note that there is a natural surjection
$T(E)\twoheadrightarrow X_*(T \times_{\Q} E)$; the cocharacter $\mu_t \in X_*(T \times_{\Q} E)$ associated with $t\in T(E)$ is given by $\langle \chi,\mu_t\rangle=\text{ord}_E\chi(t)$ for all algebraic characters $\chi$ (here $\text{ord}_E$ is the valuation on $E$ normalized such that $\text{ord}_E(\varpi_E)=1$). Then the slope of $\delta$ is the algebraic character $\text{slp}(\delta)$ satisfying
$\langle \text{slp}(\delta),\mu_t \rangle=\text{ord}_E\delta(t)$ for all $t \in T$.

\begin{defn}
Let $\varrho=\frac{1}{2}\sum_{\alpha>0} \alpha$. We say that $\delta=\delta_{\alg}\delta_{\sm}$ is of non-critical slope if there is {\it{no}} simple root $\alpha$ for which the element
$s_{\alpha}(\delta_{\alg}+\varrho)+\text{slp}(\delta_{\sm})+\varrho$ lies in the $\Q_{\geq 0}$-cone generated by all simple roots.
\end{defn}

\section{Interpolation of the Weil-Deligne representations}\label{weildel}

Our goal in this section is to interpolate the Weil-Deligne representations $\WD(r_x|_{\Gal_{F_{\tilde{v}}}})$ across deformation space $X_{\bar{r}}
$, for a fixed $v \in \Sigma_0$. More precisely, for any affinoid subvariety $\Sp(A)\subset X_{\bar{r}}$ we will define a rank $n$ Weil-Deligne representation $\WD_{\bar{r},\tilde{v}}$
over $A$ such that
\begin{equation}\label{inter}
\WD(r_x|_{\Gal_{F_{\tilde{v}}}}) \simeq \WD_{\bar{r},\tilde{v}}\otimes_{A,x} \kappa(x)
\end{equation}
for all points $x \in \Sp(A)$. The usual proof of Grothendieck's monodromy theorem (cf. \cite[Cor. 4.2.2]{Tat79}) adapts easily to this setting, and this has already been observed by other authors. See for example \cite[7.8.3--7.8.14]{BC09}, \cite[5.2]{Pau11}, and \cite[4.1.6]{EH14}. 
To make our article more self-contained (and to point out the 'usual' assumption that $A$ is reduced is unnecessary) we give the details for the convenience of the reader. 

\begin{prop}\label{wd}
Let $w$ be a place of $F$ not dividing $p$, and let $A$ be an affinoid $E$-algebra. For any continuous representation
$\rho: \Gal_{F_w}\rightarrow \GL_n(A)$ there is a unique nilpotent $N \in M_n(A)$ such that the equality $\rho(\gamma)=\exp \big(t_p(\gamma)N\big)$ holds for all $\gamma$ in an open subgroup $J \subset I_{F_w}$. (Here $t_p: I_{F_w}\twoheadrightarrow \Z_p$ is a choice of homomorphism as in section (4.2) of \cite{Tat79}.)
\end{prop}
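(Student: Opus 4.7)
The plan is to adapt Grothendieck's proof of his monodromy theorem to the (possibly non-reduced) affinoid setting. Fix a residue norm on $A$ coming from a surjection $E\langle x_1,\ldots,x_r\rangle \twoheadrightarrow A$, let $A^{\circ}$ be its unit ball, and equip $M_n(A)$ with the induced sup norm. By continuity of $\rho$ and compactness of $\Gal_{F_w}$, one finds an open subgroup $I_1 \subset I_{F_w}$ and a constant $c \in E$ with $|c| < p^{-1/(p-1)}$ such that $\rho(I_1) \subset 1+cM_n(A^{\circ})$; on this subgroup the $p$-adic exponential and logarithm converge and define mutually inverse homeomorphisms with $cM_n(A^{\circ})$, and since $cM_n(A^{\circ})$ is an $\OO_E$-module (hence pro-$p$), the group $(1+cM_n(A^{\circ}),\cdot)$ is topologically a $\Z_p$-module when restricted to abelian subgroups. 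Since wild inertia $P\subset I_{F_w}$ is pro-$\ell$ with $\ell\neq p$ the residue characteristic of $F_w$, and a group that is both pro-$p$ and pro-$\ell$ is trivial, $\rho(I_1\cap P)=1$; hence $\rho|_{I_1}$ factors through the abelian tame quotient $I_1/(I_1\cap P)$.

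In particular $\rho(I_1)$ is abelian, so composing with $\log$ produces a continuous \emph{additive} map $L:I_1\to cM_n(A^{\circ})$. As its target is a $\Z_p$-module, $L$ further factors through the maximal pro-$p$ quotient of the tame quotient, which is realized by $t_p$: thus $L=L_0\circ t_p$ for some continuous additive $L_0:t_p(I_1)\to cM_n(A^{\circ})$. Since $t_p(I_1)=p^k\Z_p$ is a free rank-one $\Z_p$-module, $L_0$ is multiplication by a single element $N:=L_0(p^k)/p^k\in M_n(A)$ (using $p^k\in E^{\times}\subset A^{\times}$), giving $\rho(\gamma)=\exp(t_p(\gamma)N)$ for all $\gamma\in I_1$.

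To see that $N$ is nilpotent, fix a Frobenius lift $F\in W_{F_w}$ and replace $I_1$ by an open subgroup $J\subset I_1\cap F^{-1}I_1F$ deep enough that $\rho(F)\cdot t_p(\gamma)N\cdot\rho(F)^{-1}$ lies in the injectivity region of $\exp$ for every $\gamma\in J$. For such $\gamma$, using $t_p(F\gamma F^{-1})=q\cdot t_p(\gamma)$ with $q$ the residue cardinality of $F_w$, one has
\[
\exp\bigl(t_p(\gamma)\,\rho(F)N\rho(F)^{-1}\bigr)=\rho(F)\rho(\gamma)\rho(F)^{-1}=\rho(F\gamma F^{-1})=\exp\bigl(q\cdot t_p(\gamma)N\bigr).
\]
Taking $\log$ and cancelling $t_p(\gamma)\in\Q_p^{\times}\subset A^{\times}$ yields the key identity $\rho(F)N\rho(F)^{-1}=qN$. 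Hence $N$ and $qN$ have the same characteristic polynomial; writing $\chi_N(t)=t^n+\sum_{j<n}c_jt^j$ and comparing coefficients gives $c_j(q^{n-j}-1)=0$ for each $j$, and since $q^{n-j}-1$ is a nonzero integer, $c_j=0$ in $A$. Thus $\chi_N(t)=t^n$ and $N^n=0$ by Cayley--Hamilton. Uniqueness of $N$ is a small variant: shrinking $J$ until $\exp$ is injective, any other admissible $N'$ satisfies $t_p(\gamma)N=t_p(\gamma)N'$ for $\gamma\in J$, and dividing by $t_p(\gamma)\in A^{\times}$ gives $N=N'$.

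The main obstacle, and the reason treatments such as \cite{BC09,Pau11,EH14} assume $A$ reduced, is the nilpotency step: those proofs verify nilpotency of $N$ pointwise at each maximal ideal and spread out via the Jacobson property, which without reducedness only yields $\chi_N\in\sqrt{0}[t]$. The argument above sidesteps this by extracting $\chi_N(t)=t^n$ directly in $A[t]$ from the conjugation identity $\rho(F)N\rho(F)^{-1}=qN$, using only that nonzero integers are units in the $E$-algebra $A$.
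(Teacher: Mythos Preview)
Your argument is correct and follows the paper's outline for constructing $N$ (trivialize wild inertia, factor through $t_p$, apply $\log$/$\exp$). The genuinely different step is the nilpotency argument. The paper derives the same relation $\rho(\Phi)N\rho(\Phi)^{-1}=\|\Phi\|N$ but then argues \emph{pointwise}: at each $x\in\Sp(A)$ the specialization of $N$ is nilpotent by the usual eigenvalue argument over the field $\kappa(x)$, hence every matrix entry of $N^n$ lies in the nilradical of $A$ (maximum modulus / Jacobson property), and finally Noetherianity of $A$ forces $N$ to be nilpotent. Your route is more direct and in fact sharper: comparing $\chi_N$ with $\chi_{qN}$ in $A[t]$ and cancelling the units $q^{n-j}-1\in E^\times\subset A^\times$ yields $\chi_N(t)=t^n$ on the nose, so Cayley--Hamilton gives $N^n=0$ (not merely $N$ nilpotent). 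This bypasses both the Jacobson property and Noetherianity and makes the non-reduced case entirely transparent, whereas the paper's pointwise method only shows $N^n\in M_n(\sqrt{0})$ and then needs $\sqrt{0}$ to be nilpotent. One small wording caution: $1+cM_n(A^\circ)$ is not literally a pro-$p$ group (the unit ball $A^\circ$ is typically not compact), so the sentence ``a group that is both pro-$p$ and pro-$\ell$ is trivial'' is imprecise as stated; the correct statement is that $g^{p^k}\to 1$ uniformly on $1+cM_n(A^\circ)$ (via $\log$), which kills any continuous image of a pro-$\ell$ group. This is a cosmetic fix and does not affect the validity of your proof.
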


\begin{proof}
Choose a submultiplicative norm $\|\cdot\|$ on $A$ relative to which $A$ is complete (if $A$ is reduced one can take the spectral norm, cf. \cite[6.2.4]{BGR84}). Let $A^{\circ}$ be the (closed) unit ball. Then $I+p^i M_n(A^{\circ})$ is an open (normal) subgroup of $\GL_n(A^{\circ})$ for $i>0$, so its inverse image $\rho^{-1}(I+p^i M_n(A^{\circ}))=\Gal_{F_i}$ for some finite extension $F_i$ of $F_w$. Note that $F_{i+1}/F_i$ is a Galois extension whose Galois group is killed by $p$. Let us fix an $i>0$ and work with the restriction
$\rho|_{\Gal_{F_i}}$. Recall that wild inertia $P_{F_i}\subset I_{F_i}$ is the Sylow pro-$\ell$ subgroup where $w|\ell$. Since $\ell \neq p$ we deduce that $P_{F_i}\subset \Gal_{F_j}$ for all $j \geq i$. That is $\rho$ factors through the tame quotient $I_{F_i}/P_{F_i}\simeq \prod_{q\neq \ell} \Z_q$. For the same reason $\rho$ factors further through
$t_p: I_{F_i}\twoheadrightarrow \Z_p$. Therefore we find an element $\alpha \in I+p^i M_n(A^{\circ})$ (the image of $1\in \Z_p$ under $\rho$) such that $\rho(\gamma)=\alpha^{t_p(\gamma)}$ for all $\gamma \in I_{F_i}$. We let $N:=\log(\alpha)$. If we choose $i$ large enough ($i>1$ suffices, cf. the discussion in \cite[p. 220]{Sch11}) all power series converge and we arrive at $\rho(\gamma)=\exp\big(t_p(\gamma)N\big)$ for $\gamma \in I_{F_i}$. We conclude that we may take $J:=I_{F_2}$. (The uniqueness of $N$ follows by taking log on both sides.)

To see that $N$ is nilpotent note the standard relation $\rho(w)N\rho(w)^{-1}=\|w\|N$ for $w \in W_{F_i}$. If we take $w$ to be a (geometric) Frobenius this shows that all specializations of $N^n$ at points $x \in \Sp(A)$ are $0$ (by considering the eigenvalues in $\overline{\kappa(x)}$ as usual). Thus all matrix entries of $N^n$ are nilpotent (by the maximum modulus principle \cite[6.2.1]{BGR84}). Therefore $N$ itself is nilpotent since $A$ is Noetherian. \end{proof}

If we choose a geometric Frobenius $\Phi$ from $W_{F_w}$ (keeping the notation of the previous Proposition) we can thus define a Weil-Deligne representation
$(\tilde{\rho},N)$ on $A^n$ by the usual formula (\cite[4.2.1]{Tat79}):
$$
\rho(\Phi^s \gamma)=\tilde{\rho}(\Phi^s \gamma)\exp \big(t_p(\gamma)N\big)
$$
where $s \in \Z$ and $\gamma\in I_{F_w}$. With this definition $\tilde{\rho}: W_{F_w}\rightarrow \GL_n(A)$ is a representation which is trivial on the open subgroup $J\subset W_{F_w}$
(so continuous for the discrete topology on $A$). 

As already hinted at above we apply this construction to $r^{univ}|_{\Gal_{F_{\tilde{v}}}}$ for a fixed place $v \in \Sigma_0$, and an affinoid $\Sp(A)\subset X_{\bar{r}}$. We view the universal deformation $r^{univ}: \Gal_F \rightarrow \GL_n(R_{\bar{r}})$ as a representation on $A^n$ by composing with 
$R_{\bar{r}}\rightarrow \OO(X_{\bar{r}})\rightarrow A$. This gives a Weil-Deligne representation $\WD_{\bar{r},\tilde{v}}$
over $A$ with the interpolative property (\ref{inter}). 

\section{The local Langlands correspondence for $\GL_n$ after Scholze}\label{llc}

Scholze gave a new purely local characterization of the local Langlands correspondence in \cite{Sc13}. His trace identity (cf. Theorem 1.2 in loc. cit.) takes the following form. Let $\Pi$ be an irreducible smooth representation of $\GL_n(F_w)$, where $w$ is an arbitrary finite place of $F$. Suppose we are given $\tau=\Phi^s \gamma$ with $\gamma \in I_{F_w}$ and $s \in \Z_{>0}$, together with a $\Q$-valued 'cut-off' function $h \in \CC_c^{\infty}(\GL_n(\OO_{F_w}))$. First Scholze associates a $\Q$-valued function $\phi_{\tau,h}\in \CC_c^{\infty}(\GL_n(F_{w,s}))$, where $F_{w,s}$ denotes the unramified degree $s$ extension of $F_w$. The function $\phi_{\tau,h}$ is defined by taking the trace of $\tau\times h^{\vee}$ on (alternating sums of) certain formal nearby cycle sheaves \`{a} la Berkovich on deformation spaces of $\varpi$-divisible $\OO_{F_w}$-modules; and $h^{\vee}(g)=h({^tg}^{-1})$. See the discussion leading up to \cite[Thm. 2.6]{Sc13} for more details. Next one selects a function $f_{\tau,h}\in \CC_c^{\infty}(\GL_n(F_w))$ which is {\it{associated}} with $\phi_{\tau,h}$ 
in the sense that their (twisted) orbital integrals match. More precisely, with suitable normalizations one has the identity  $TO_{\delta}(\phi_{\tau,h})=O_{\gamma}(f_{\tau,h})$
for regular $\gamma=\mathcal{N}\delta$, cf. \cite[Thm. 2.1]{Clo87}. With our normalization of $\rec(\cdot)$ Scholze's trace identity reads
$$
\tr (f_{\tau,h}|\Pi)=\tr\big(\tau|\rec(\Pi \otimes |\det|^{(1-n)/2})\big) \cdot \tr(h|\Pi).
$$
We will make use of a variant of $f_{\tau,h}$ which lives in the Bernstein center of $\GL_n(F_w)$. We refer to section 3 of \cite{Hai11} for a succinct review of the basic properties and different characterizations of the Bernstein center. This variant $f_{\tau}$ has the property that $\tr(f_{\tau,h}|\Pi)=\tr(f_{\tau}*h|\Pi)$ and is defined for all $\tau \in W_{F_w}$ by decreeing that $f_{\tau}$ acts on any irreducible smooth representation $\Pi$ via scaling by 
$$
f_{\tau}(\Pi)=\tr\big(\tau|\rec(\Pi \otimes |\det|^{(1-n)/2}) \big).
$$
For the existence of $f_{\tau}$ see the proofs of \cite[Lem. 3.2]{Sc13}, \cite[Lem. 6.1]{Sc13a}, and/or \cite[Lem. 9.1]{Sc11}.

We apply this construction to each of the places $\tilde{v}$ with $v \in \Sigma_0$. Now $\tau=(\tau_{\tilde{v}})$ denotes a tuple of Weil elements
$\tau_{\tilde{v}} \in W_{F_{\tilde{v}}}$. Via our isomorphisms $i_{\tilde{v}}$ we view $f_{\tau_{\tilde{v}}}$ as an element of the Bernstein center of $U(F_v^+)$, and consider
$f_{\tau}:=\otimes_{v\in \Sigma_0} f_{\tau_{\tilde{v}}}$.

\begin{lem}\label{weil}
Let $x \in X_{\bar{r}}$ be arbitrary. Then $f_{\tau}$ acts on $\otimes_{v \in \Sigma_0} \pi_{x,v}$ via scaling by 
$$
f_{\tau}(\otimes_{v \in \Sigma_0} \pi_{x,v})=\prod_{v\in \Sigma_0} \tr \big( \tau_{\tilde{v}}| \WD(r_x|_{\Gal_{F_{\tilde{v}}}}) \big).
$$
\end{lem}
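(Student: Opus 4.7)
The plan is to reduce the lemma almost immediately to the defining properties of $f_{\tau_{\tilde{v}}}$ as an element of the Bernstein center, together with the compatibility between the recipe defining $\pi_{x,v}$ in (\ref{pix}) and the local Langlands characterization of $f_{\tau_{\tilde{v}}}$. The main observation is that there is essentially no work to do beyond carefully matching normalizations.

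First I would note that by construction $\pi_{x,v}$ is an \emph{irreducible} smooth representation of $U(F_v^+)$: indeed, the recipe (\ref{pix}) applies $\rec^{-1}$ to the $F$-semisimple Weil-Deligne representation $\WD(r_x|_{\Gal_{F_{\tilde{v}}}})^{F-ss}$, and $\rec$ is a bijection between $F$-semisimple WD representations and irreducible smooth representations. Consequently $\BC_{\tilde{v}|v}(\pi_{x,v})$ is an irreducible smooth representation of $\GL_n(F_{\tilde{v}})$, so by the defining property of $f_{\tau_{\tilde{v}}}$ recalled just before the lemma, $f_{\tau_{\tilde{v}}}$ acts on it by the scalar
$$
f_{\tau_{\tilde{v}}}\bigl(\BC_{\tilde{v}|v}(\pi_{x,v})\bigr)=\tr\bigl(\tau_{\tilde{v}}\,\big|\,\rec\bigl(\BC_{\tilde{v}|v}(\pi_{x,v})\otimes|\det|^{(1-n)/2}\bigr)\bigr).
$$
Invoking (\ref{pix}) rewrites the right-hand side as $\tr\bigl(\tau_{\tilde{v}}\,\big|\,\WD(r_x|_{\Gal_{F_{\tilde{v}}}})^{F-ss}\bigr)$.

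Next I would remove the $F$-semisimplification from the trace. This is the one substantive (but entirely standard) verification: $F$-semisimplification is an instance of semisimplification of $W_{F_{\tilde{v}}}$-representations (it replaces a lift of Frobenius by its semisimple part while leaving the inertial action unchanged), and semisimplification preserves the character on the whole group. Hence
$$
\tr\bigl(\tau_{\tilde{v}}\,\big|\,\WD(r_x|_{\Gal_{F_{\tilde{v}}}})^{F-ss}\bigr)=\tr\bigl(\tau_{\tilde{v}}\,\big|\,\WD(r_x|_{\Gal_{F_{\tilde{v}}}})\bigr)
$$
for every $\tau_{\tilde{v}}\in W_{F_{\tilde{v}}}$.

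Finally, since $f_{\tau}=\otimes_{v\in\Sigma_0} f_{\tau_{\tilde{v}}}$ and $\otimes_{v\in\Sigma_0}\pi_{x,v}$ is the external tensor product over the product group $\prod_{v\in\Sigma_0}\GL_n(F_{\tilde{v}})$, the action decomposes factor by factor and $f_\tau$ acts by the product of the scalars computed above. Taking the product over $v\in\Sigma_0$ yields exactly
$$
\prod_{v\in\Sigma_0}\tr\bigl(\tau_{\tilde{v}}\,\big|\,\WD(r_x|_{\Gal_{F_{\tilde{v}}}})\bigr),
$$
as asserted. There is no real obstacle here; the only point requiring care is the observation that $F$-semisimplification preserves traces, which follows from the fact that the semisimple part of a Jordan decomposition has the same characteristic polynomial as the original element. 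The lemma is thus essentially a bookkeeping translation between Scholze's Bernstein-center construction and the definition (\ref{pix}) of $\pi_{x,v}$, and its usefulness comes later when this local scalar is interpolated across the eigenvariety via $\WD_{\bar{r},\tilde{v}}$ from Section \ref{weildel}.
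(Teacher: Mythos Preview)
Your proof is correct and follows essentially the same route as the paper's: apply the defining property of each $f_{\tau_{\tilde{v}}}$ to the irreducible representation $\BC_{\tilde{v}|v}(\pi_{x,v})$, take the product over $v\in\Sigma_0$, and invoke (\ref{pix}). The paper's proof is terser and does not spell out the (harmless) point about $F$-semisimplification preserving traces, which you handle explicitly; otherwise the arguments are identical.
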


\begin{proof}
If $\{\pi_v\}_{v \in \Sigma_0}$ is a family of irreducible smooth representations, $f_{\tau}$ acts on $\otimes_{v \in \Sigma_0}\pi_v$ via scaling by 
$$
f_{\tau}(\otimes_{v \in \Sigma_0}\pi_v)=\prod_{v \in \Sigma_0} \tr \big(\tau_{\tilde{v}}|\rec(\BC_{\tilde{v}|v}(\pi_{v}) \otimes |\det|^{(1-n)/2})\big).
$$
Now use the defining property (\ref{pix}) of the representations $\pi_{x,v}$ attached to the point $x$.
\end{proof}

\section{Interpolation of traces}\label{tr}

Let $\frak{Z}(U(F_v^+))$ denote the Bernstein center of $U(F_v^+)$, and let $\mathcal{Z}(U(F_v^+),K_v)$ be the center of the Hecke algebra $\HH(U(F_v^+),K_v)$. There is a canonical homomorphism $\frak{Z}(U(F_v^+))\rightarrow \mathcal{Z}(U(F_v^+),K_v)$ obtained by letting the Bernstein center act on $\CC_c^{\infty}(K_v\backslash U(F_v^+))$, cf. \cite[3.2]{Hai11}.
We let $f_{\tau_{\tilde{v}}}^{K_v}$ be the image of $f_{\tau_{\tilde{v}}}$ under this map, and consider $f_{\tau}^{K_{\Sigma_0}}:=\otimes_{v \in \Sigma_0} f_{\tau_{\tilde{v}}}^{K_v}$
belonging to $\mathcal{Z}(K_{\Sigma_0}):=\otimes_{v \in \Sigma_0}\mathcal{Z}(U(F_v^+),K_v)$ which is the center of $\mathcal{H}(K_{\Sigma_0})$. In particular this operator $f_{\tau}^{K_{\Sigma_0}}$ acts on the sheaf $\M$ and its fibers $\M_y$. 

If $y=(x,\delta)\in Y(K^p,\bar{r})(E)$ is a classical point of non-critical slope, and we combine Proposition \ref{ncr} and Lemma \ref{weil}, we deduce that 
$f_{\tau}^{K_{\Sigma_0}}$ acts on $\M_y'\simeq \otimes_{v \in \Sigma_0} \pi_{x,v}^{K_v}$ via scaling by 
$$
\prod_{v\in \Sigma_0} \tr \big( \tau_{\tilde{v}}| \WD(r_x|_{\Gal_{F_{\tilde{v}}}}) \big).
$$
The goal of this section is to extrapolate this property to {\it{all}} points $y$. As a first observation we note that the above factor can be interpolated across deformation space $X_{\bar{r}}$. Indeed, let $\Sp(A)\subset X_{\bar{r}}$ be an affinoid subvariety and let $\WD_{\bar{r},\tilde{v}}$ be the Weil-Deligne representation on $A^n$ constructed after Proposition \ref{wd}. 

\begin{lem}\label{atau}
For each tuple $\tau=(\tau_{\tilde{v}}) \in \prod_{v\in \Sigma_0} W_{F_{\tilde{v}}}$ the element $a_{\tau}:=\prod_{v\in \Sigma_0} \tr \big( \tau_{\tilde{v}}| \WD_{\bar{r},\tilde{v}} \big) \in A$ 
satisfies the following interpolative property: For every point $x \in \Sp(A)$ the function $a_{\tau}$ specializes to
$$
a_{\tau}(x)=\prod_{v\in \Sigma_0} \tr \big( \tau_{\tilde{v}}| \WD(r_x|_{\Gal_{F_{\tilde{v}}}}) \big) \in \kappa(x).
$$ 
\end{lem}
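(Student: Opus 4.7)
The plan is to reduce the claim to the single-place assertion
\[
\tr\bigl(\tau_{\tilde{v}} \mid \WD_{\bar{r},\tilde{v}}\bigr)(x) \;=\; \tr\bigl(\tau_{\tilde{v}} \mid \WD(r_x|_{\Gal_{F_{\tilde{v}}}})\bigr) \quad \text{in } \kappa(x)
\]
for each $v \in \Sigma_0$, after which the product formula for $a_\tau$ follows by multiplying these equalities together (both sides are products indexed by $v$, and the evaluation map $A \to \kappa(x)$ is a ring homomorphism). So I fix $v \in \Sigma_0$ and write $\rho = r^{univ}|_{\Gal_{F_{\tilde{v}}}}$ viewed as a continuous representation on $A^n$ via $R_{\bar{r}} \to A$.

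Next I would unwind the construction of $\WD_{\bar{r},\tilde{v}} = (\tilde{\rho}, N)$ from Proposition \ref{wd} and the discussion immediately following it: writing $\tau_{\tilde{v}} = \Phi^s \gamma$ with $s \in \Z$ and $\gamma \in I_{F_{\tilde{v}}}$, the matrix $\tilde{\rho}(\tau_{\tilde{v}}) \in \GL_n(A)$ is computed by the defining formula $\rho(\Phi^s \gamma) = \tilde{\rho}(\Phi^s\gamma)\exp(t_p(\gamma)N)$. Thus $\tr(\tau_{\tilde{v}} \mid \WD_{\bar{r},\tilde{v}})$ is literally the trace of the matrix $\tilde{\rho}(\tau_{\tilde{v}}) \in M_n(A)$, which is an element of $A$.

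The key compatibility is that the construction of $(\tilde{\rho}, N)$ is functorial in $A$: given a continuous $E$-algebra map $A \to B$ (for example evaluation at $x$, giving $A \to \kappa(x)$), the base-changed representation $\rho_B \colon \Gal_{F_{\tilde{v}}} \to \GL_n(B)$ satisfies $\rho_B(\gamma) = \exp(t_p(\gamma)(N \otimes 1))$ on the same open subgroup $J \subset I_{F_{\tilde{v}}}$ as for $\rho$, so by the uniqueness clause of Proposition \ref{wd} the nilpotent operator associated to $\rho_B$ is exactly $N \otimes_A 1$, and consequently $\tilde{\rho}_B = \tilde{\rho} \otimes_A 1$ as Weil representations (both are trivial on $J$ and agree on a set of coset representatives by the same defining formula). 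Specializing at $x$ gives precisely the isomorphism (\ref{inter}).

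Finally, trace of a matrix commutes with base change: if $M \in M_n(A)$ then $\tr(M) \in A$ has the property $\tr(M)(x) = \tr(M \otimes_{A,x} 1)$ in $\kappa(x)$, since trace is $A$-linear and the reduction map $M_n(A) \to M_n(\kappa(x))$ is induced by the ring map $A \to \kappa(x)$. Applying this to $M = \tilde{\rho}(\tau_{\tilde{v}})$ yields the per-place equality, and multiplying over $v \in \Sigma_0$ gives the lemma. There is no real obstacle here; the statement is essentially a formal consequence of the functoriality built into the proof of Proposition \ref{wd}, and the main point is simply to observe that the uniqueness of the monodromy operator forces compatibility with arbitrary continuous $A$-algebra maps, so that trace commutes with specialization.
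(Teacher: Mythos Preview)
Your proof is correct and follows the same approach as the paper, which simply says the lemma is ``clear from the interpolative property of $\WD_{\bar{r},\tilde{v}}$ by taking traces in (\ref{inter}).'' You have essentially unpacked and re-justified (\ref{inter}) via the uniqueness clause of Proposition~\ref{wd}, which is more detail than the paper gives (since (\ref{inter}) is treated there as already established by the construction preceding the lemma), but the underlying argument is identical.
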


\begin{proof}
This is clear from the interpolative property of $\WD_{\bar{r},\tilde{v}}$ by taking traces in (\ref{inter}).
\end{proof}

Our main result in this section (Proposition \ref{fam} below) shows that $a_{\tau}$ extends naturally to a function defined on the whole eigenvariety $Y(K^p,\bar{r})$ in such  a way that $f_{\tau}^{K_{\Sigma_0}}: \M \rightarrow \M$ is multiplication by $a_{\tau}$. 

%Now let $\Omega\subset \Sp(A)\times \hat{T}$ be an admissible open subset of $Y(K^p,\bar{r})$. Note that $\M$ lives on $X_{\bar{r}}\times \hat{T}$ but we 
%naturally view it as a coherent sheaf on its support $Y(K^p,\bar{r})$ (cf. \cite[Prop. 9.5.2/4]{BGR84}). The module of sections $\Gamma(\Omega,\M)$ carries a natural
%$\OO(\Omega)$-linear action of $\mathcal{H}(K_{\Sigma_0})$. (We denote the structure sheaf $\OO_{Y(K^p,\bar{r})}$ by $\OO$ to simplify the notation here.)
%We let $A$ act by composing with $A \rightarrow \OO(\Omega)$. 

%We now invoke the natural weight morphism $\omega: Y(K^p,\bar{r})\rightarrow \WW$ defined as the composition of maps
%$$
%Y(K^p,\bar{r}) \hookrightarrow X_{\bar{r}}\times \hat{T} \overset{\text{pr}}{\longrightarrow} \hat{T} \overset{\text{can}}{\longrightarrow} \WW.
%$$
%In general $Y(K^p,\bar{r})$ is admissibly covered by open affinoids $\Omega$ such that $\omega(\Omega)\subset \WW$ is an open affinoid, 
%$\omega: \Omega \rightarrow \omega(\Omega)$ is finite and surjective when restricted to any irreducible component of $\Omega$. 

%------------------INSERT J&N BELOW---------------

First we need to recall a couple of well-known facts from rigid analytic geometry. 

\begin{lem}\label{opensubisirred}
Let $X$ be an irreducible rigid analytic space (over some unspecified non-archimedean field) and let $Y \subset X$ be a non-empty Zariski open subset (cf. \cite[Def.~9.5.2/1]{BGR84}). Then $Y$ is irreducible.
\end{lem}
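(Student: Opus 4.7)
The plan is to use the standard characterization of irreducibility in rigid analytic geometry (following Conrad's theory of irreducible components of rigid spaces): a rigid space $X$ is irreducible if and only if, whenever $X = A \cup B$ with $A, B$ closed analytic subsets, one of $A$ or $B$ equals $X$. Granting this as a black box, the proof is purely formal and parallels the familiar scheme-theoretic argument.

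First I would verify that the Zariski closure $\bar Y$ of $Y$ in $X$ equals $X$. Writing $X = \bar Y \cup (X \setminus Y)$ as a union of two closed analytic subsets, irreducibility forces one of them to be $X$; since $Y \neq \emptyset$ we cannot have $X \setminus Y = X$, and hence $\bar Y = X$. This is the essential geometric input that feeds into the rest of the argument.

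Next I would argue by contradiction: assume $Y = Y_1 \cup Y_2$ with each $Y_i$ a proper closed analytic subset of $Y$, and let $\bar{Y}_i$ denote the Zariski closure of $Y_i$ in $X$, which is a closed analytic subset of $X$. A short verification, using only that $Y_i$ is already closed in $Y$, yields $\bar{Y}_i \cap Y = Y_i$. Since finite unions of closed analytic subsets are closed analytic, the inclusion $Y \subset \bar{Y}_1 \cup \bar{Y}_2$ passes to Zariski closures in $X$ to give $X = \bar Y \subset \bar{Y}_1 \cup \bar{Y}_2$, whence $X = \bar{Y}_1 \cup \bar{Y}_2$. By irreducibility of $X$, some $\bar{Y}_i$ equals $X$, and intersecting with $Y$ forces $Y_i = Y$, contradicting the properness assumption.

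I do not foresee a serious obstacle: every move is formal once one accepts the characterization of irreducibility recalled at the outset, and this is the only geometric input of the argument. I would simply invoke that characterization from the literature rather than reproving it.
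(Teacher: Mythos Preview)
Your approach is different from the paper's, and it has a real gap at exactly the point where the rigid-analytic content enters.

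The paper does not argue formally with closures. Instead it passes to the normalization $\widetilde{X}\to X$ (connected since $X$ is irreducible), pulls back $Y$ to a Zariski open $\widetilde{Y}\subset\widetilde{X}$, and shows $\widetilde{Y}$ is connected (hence $Y$ is irreducible, by Conrad's definition via normalization). Connectedness of $\widetilde{Y}$ is proved using Bartenwerfer's Hebbarkeitssatz: a nontrivial idempotent on $\widetilde{Y}$ would extend to a bounded analytic function on the normal space $\widetilde{X}$, necessarily a nontrivial idempotent, contradicting connectedness. The Riemann-type extension theorem is the analytic input, and the paper makes it explicit.

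Your argument hides the same kind of input in the assertion ``a short verification \ldots\ yields $\bar{Y}_i\cap Y=Y_i$.'' In scheme theory this is indeed trivial, because closed just means closed for a topology and the subspace topology behaves as expected. In rigid geometry, however, ``Zariski closed'' means ``closed analytic subset'' (support of $\OO_X/\mathcal{I}$ for a coherent ideal sheaf $\mathcal{I}$), and the identity $\bar{Y}_i\cap Y=Y_i$ is equivalent to the statement that every closed analytic subset of the Zariski open $Y$ is the trace $C\cap Y$ of some closed analytic $C\subset X$. Concretely, one must extend the coherent sheaf $\OO_Y/\mathcal{I}_{Y_i}$ (or the ideal sheaf $\mathcal{I}_{Y_i}$) across the closed complement $W=X\setminus Y$; this is an honest extension theorem for coherent sheaves on rigid spaces, not a formal manipulation, and it is of the same flavor as the Bartenwerfer result the paper invokes. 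Without it you cannot conclude that $\bar{Y}_i\subsetneq X$ from $Y_i\subsetneq Y$, and the contradiction does not close.

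So the claim that ``every move is formal once one accepts the characterization of irreducibility'' is not correct: you have not eliminated the analytic input, only relocated it. To repair the argument you would need to cite or prove an extension theorem for closed analytic subsets across a Zariski closed set; alternatively, follow the paper and use normalization plus Bartenwerfer, which packages the needed extension property cleanly for bounded functions on normal spaces.
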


\begin{proof}
Let $\widetilde{X}\rightarrow X$ be the (irreducible) normalization of $X$. The pullback of $Y$ to $\widetilde{X}$ is a normalization $\widetilde{Y}\rightarrow Y$ and it suffices to show that the Zariski open subset $\widetilde{Y} \subset \widetilde{X}$ is connected (cf. \cite[Def.~2.2.2]{Con99}). Suppose $\widetilde{Y} = U\coprod V$ is an admissible covering with $U, V$ proper admissible open subsets of $\widetilde{Y}$. By Bartenwerfer's Hebbarkeitssatz \cite[p.~159]{Bar76} the idempotent function on $\widetilde{Y}$ which is $1$ on $U$ and $0$ on $V$ extends to an analytic function on $\widetilde{X}$, which is necessarily a non-trivial idempotent by the uniqueness in Bartenwerfer's Theorem "Riemann I". This contradicts the irreducibility of $\widetilde{X}$ (by \cite[Lem. ~2.2.3]{Con99}), so $\widetilde{Y}$ must be connected.
\end{proof} 

\begin{defn}
A Zariski dense subset $Z$ of a rigid space $X$ is called {\it{very Zariski dense}} (or {\it{Zariski dense and accumulation}}, see \cite[Prop.~2.6]{Che11}) if for $z \in Z$ and an affinoid open neighbourhood $z \in U \subset X$, there is an affinoid open neighbourhood $z \in V \subset U$ such that $Z\cap V$ is Zariski dense in $V$.
\end{defn}

\begin{lem}\label{opendense}
Let $X$ be a rigid space and let $Z \subset X$ be a very Zariski dense subset. Let $Y \subset X$ be a Zariski open subset which is Zariski dense. Then $Y\cap Z$ is very Zariski dense in $Y$.
\end{lem}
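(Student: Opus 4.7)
The plan is to verify the two conditions defining very Zariski density for $Y\cap Z\subset Y$ separately: ordinary Zariski density, and the accumulation property at each point of $Y\cap Z$.

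Accumulation is essentially formal. Given $z\in Y\cap Z$ and an affinoid open neighbourhood $z\in U\subset Y$, the fact that $Y$ is Zariski open (in particular admissible open) in $X$ means that $U$ is also an affinoid open neighbourhood of $z$ in the ambient space $X$. Applying the very Zariski density of $Z$ in $X$ to the pair $(z,U)$ produces an affinoid open $z\in V\subset U$ with $Z\cap V$ Zariski dense in $V$. Since $V\subset U\subset Y$, one has $Z\cap V=(Y\cap Z)\cap V$, which gives the accumulation property of $Y\cap Z$ inside $Y$ at $z$.

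For the Zariski density statement I would argue by contradiction. Suppose $W\subsetneq Y$ is a proper analytic subset containing $Y\cap Z$, and let $\widetilde{W}$ be the Zariski closure of $W$ in $X$. Then $\widetilde{W}\cup(X\setminus Y)$ is a finite union of analytic subsets of $X$, hence itself analytic in $X$, and it contains $Z=(Y\cap Z)\cup(Z\setminus Y)$. The Zariski density of $Z$ in $X$ therefore forces $\widetilde{W}\cup(X\setminus Y)=X$, equivalently $Y\subseteq\widetilde{W}$. Combined with the equality $\widetilde{W}\cap Y=W$ (see below) this gives $W=Y$, contradicting properness of $W$ in $Y$.

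The main — and really the only — technical point is the equality $\widetilde{W}\cap Y=W$, i.e.\ that the Zariski closure in $X$ of an analytic subset of a Zariski open $Y\subset X$ cuts back to $W$ upon re-restriction to $Y$. This amounts to showing that $W$ extends to some analytic subset of $X$, a standard rigid-analytic extension result: locally, if $X\setminus Y$ is cut out on some affinoid $X_0\subset X$ by analytic functions $f_1,\ldots,f_m$ and $W$ is cut out on $Y\cap X_0$ by $g_1,\ldots,g_k$, then for sufficiently large $N$ the products $f_i^N g_j$ extend to analytic functions on $X_0$ whose common zero locus is precisely $W\cup(X_0\setminus Y)$. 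This extension is the only non-formal input; the rest of the argument is bookkeeping with analytic closures.
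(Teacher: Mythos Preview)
Your accumulation argument is fine and matches the paper's observation that this part is immediate.

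The Zariski density argument, however, has a genuine gap at the claimed equality $\widetilde{W}\cap Y = W$. In rigid geometry this fails in general: closed analytic subsets of a Zariski open $Y\subset X$ need \emph{not} be the restriction of an analytic subset of $X$. Concretely, take $X$ to be the closed unit disc over $\Q_p$, $Y=X\setminus\{0\}$, and $W=\{p^n:n\ge 1\}$. On each annulus $\{p^{-m}\le |z|\le 1\}$ in an admissible cover of $Y$ the set $W$ is finite, hence analytic, so $W$ is a closed analytic subset of $Y$. But any analytic subset of the affinoid $X$ is the zero locus of an element of the Tate algebra $\Q_p\langle z\rangle$, and such a function has only finitely many zeros by Weierstrass preparation. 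Hence the Zariski closure $\widetilde{W}$ of $W$ in $X$ is all of $X$, and $\widetilde{W}\cap Y = Y \ne W$. Your local extension argument breaks down for the same reason: a section $g$ of $\OO_Y$ vanishing on $W$ must have an essential singularity at $0$ (otherwise $z^Ng$ would lie in the Tate algebra and have infinitely many zeros), so no power $f_i^N g_j = z^N g_j$ extends across $X\setminus Y$. The slogan ``clear denominators'' is an algebraic reflex that does not survive in the rigid setting.

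The paper's proof sidesteps this entirely. Rather than trying to extend an arbitrary analytic $W\subset Y$ to $X$, it reduces to showing $Z$ is Zariski dense in each irreducible component $Y\cap C$ of $Y$ (using \cite[Cor.~2.2.9]{Con99}), passes to the irreducible Zariski open $Y\cap C^\circ$ via the preceding Lemma~\ref{opensubisirred}, and then uses the accumulation property at a single point $z\in Z\cap(Y\cap C^\circ)$ to produce an affinoid $V$ in which $Z$ is Zariski dense; irreducibility then forces $Z$ to be Zariski dense in all of $Y\cap C^\circ$. The essential point is that irreducibility plus accumulation lets you propagate density from a small affinoid to the whole component without ever needing to extend analytic sets across the boundary $X\setminus Y$.
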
	

\begin{proof}
We first note that it suffices to prove that $Y\cap Z$ is Zariski dense in $Y$. Very Zariski density then follows immediately from very Zariski density of $Z$ in $X$. We show that $Z$ is Zariski dense in every irreducible component of $Y$. By \cite[Cor.~2.2.9]{Con99} these irreducible components are given by the subsets $Y \cap C$ where $C$ is an irreducible 
component of $X$. Denote by $C^\circ$ the Zariski open subset of $X$ given by removing the intersections with all other irreducible components from 
$C$. Then $Y \cap C^\circ$ is irreducible by Lemma \ref{opensubisirred} and meets $Z$ since it is Zariski open in $X$. It follows from very Zariski density of $Z$ in $X$ that $Z$ is Zariski dense in $Y \cap C^\circ$. We deduce that $Z$ is Zariski dense in $Y \cap C$, as desired.
\end{proof}

%The key point is to show $Y\cap Z$ is Zariski {\it{dense}} in $Y$. The fact that it is indeed {\it{very}} Zariski dense is then trivial to check and left to the reader; it is a formal consequence of $Z$ being very Zariski dense. To show $Y\cap Z$ is dense, Lemma \ref{opensubisirred} takes care of the case where $X$ is irreducible. In the general case we proceed by showing
%$Y \cap Z$ has non-trivial intersection with every irreducible component of $Y$ as follows. 

%We let $C$ be an irreducible component of $X$. Since $Y\cap C$ is a non-empty Zariski open subset of $C$, it is irreducible by Lemma \ref{opensubisirred}, and we deduce from \cite[Cor.~2.2.9]{Con99} that $Y \cap C$ is an irreducible component of $Y$ -- and every irreducible component of $Y$ has this form. By (plain) Zariski density of $Z$ in $X$, 
%the intersection $Z \cap C \neq \varnothing$ is dense in $C$ and therefore intersects the open set $Y \cap C$. \end{proof}

%Let $U$ be an open affinoid neighbourhood of $z$ in $X$. By very Zariski density, we have an open affinoid neighbourhood $z \in V \subset U$ such that $Z\cap V$ is Zariski dense in $V$. Since $V\cap C \neq \varnothing$ (it contains $z$) is a union of irreducible components of $V$ (again by \cite[Cor.~2.2.9]{Con99}), $Z\cap V \cap C$ is Zariski dense in $V \cap C$. We deduce that $Z\cap C$ is Zariski dense in $C$, and therefore $Z \cap C \cap Y$ is non-empty. So $Z$ has non-trivial intersection with every irreducible component of $Y$. 

In order to deal with the {\it{non}-\'{e}tale} points below, the following generic freeness lemma will be crucial.

\begin{lem}\label{genfree}
Let $X$ be a reduced rigid space and let $\mathcal{M}$ be a coherent $\OO_X$-module. Then there is a Zariski open and dense subset $X_{\mathcal{M}} \subset X$ over which $\mathcal{M}$ is locally free.
\end{lem}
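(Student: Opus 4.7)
The plan is to reduce to the affinoid case and invoke classical generic freeness over a reduced Noetherian ring, then transfer the resulting open dense subset to the rigid setting.

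First, the statement is local on $X$: given an admissible affinoid cover $\{U_i\}$ of $X$, the union of Zariski open dense subsets $X_{\mathcal{M},i} \subset U_i$ is Zariski open in $X$, and its Zariski density follows from Lemma \ref{opendense} together with the fact that Zariski density can be checked locally on an admissible cover. So I may assume $X = \Sp(A)$ is a reduced affinoid and $\mathcal{M}$ corresponds to a finite $A$-module $M$.

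Next I would invoke classical generic freeness for finite modules over reduced Noetherian rings. Let $\mathfrak{p}_1,\ldots,\mathfrak{p}_k$ be the minimal primes of $A$. Since $A$ is reduced, each $A_{\mathfrak{p}_i}$ is a field and $M_{\mathfrak{p}_i}$ is automatically free of some rank $r_i$. Consequently the locally-free locus
\[
U_M = \{\mathfrak{p}\in\Spec A : M_\mathfrak{p} \text{ is a free } A_\mathfrak{p}\text{-module}\}
\]
is Zariski open in $\Spec A$ and contains every $\mathfrak{p}_i$. Writing $\Spec A \setminus U_M = V(I)$ for an ideal $I \subset A$, we have $I \not\subset \mathfrak{p}_i$ for any $i$.

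Then I transfer the conclusion to rigid geometry. The ideal $I$ cuts out a Zariski closed analytic subset $Z = V(I)^{\rig} \subset \Sp A$, and I set $X_\mathcal{M} := \Sp A \setminus Z$, which is Zariski open. For Zariski density, I use that the irreducible components of the rigid space $\Sp A$ correspond to the minimal primes $\mathfrak{p}_i$ (a standard fact for reduced affinoids); since $I \not\subset \mathfrak{p}_i$ for all $i$, the closed subset $Z$ contains no irreducible component, so $X_\mathcal{M}$ is Zariski dense. Finally, for local freeness of $\mathcal{M}$ on $X_\mathcal{M}$, cover $X_\mathcal{M}$ admissibly by rational subdomains $\Sp(B) \subset \Sp A$ on which some generator $f$ of $I$ becomes invertible (e.g.\ union the rational domains $\Sp(A\langle \varpi^n/f\rangle)$ over $n \geq 1$). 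On such a $\Sp B$ the structure map $A \to B$ factors through the algebraic localization $A_f$, hence $M \otimes_A B \cong M_f \otimes_{A_f} B$ is locally free over $B$ by base change from the locally free $A_f$-module $M_f$ (which is locally free by our choice of $I$).

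The main obstacle is the delicate but routine transfer between the algebraic and rigid-analytic pictures, specifically because Zariski open subsets of $\Sp A$ are rarely affinoid and must be handled through admissible covers by rational subdomains; one must also check that rigid-analytic Zariski density (non-containment of irreducible components) agrees with the algebraic condition that $I$ avoids every minimal prime. Once these bookkeeping points are settled, the entire proof reduces to classical commutative algebra applied to $M$ over the reduced Noetherian ring $A$.
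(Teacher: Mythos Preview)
Your argument is essentially correct but follows a different route from the paper. The paper restricts first to the regular locus $X^{\mathrm{reg}}$ (Zariski open and dense by excellence of affinoid algebras) and then characterizes the non-locally-free locus inside $X^{\mathrm{reg}}$ as the support of $\bigoplus_{i=1}^{\dim U}\Ext^i_{\OO(U)}(\mathcal{M}(U),\OO(U))$, invoking a Cohen--Macaulay dimension bound from Bruns--Herzog to see this support is nowhere dense. Your approach instead appeals directly to generic freeness over a reduced Noetherian ring: the algebraic locally-free locus is open in $\Spec A$ and contains every minimal prime, and you then push this through to $\Sp A$ via rational subdomains. Your route is more elementary (no excellence, no $\Ext$ groups), while the paper's route gives a more explicit homological description of the bad locus and ties in with the reference \cite{Han17} already used elsewhere.

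Two small points to tighten. First, your reduction to the affinoid case is a little loose: a union of subsets that are Zariski open in each $U_i$ is not automatically Zariski open in $X$; what makes it work here is that the non-locally-free locus is cut out by a canonical coherent ideal sheaf (e.g.\ via Fitting ideals of $\mathcal{M}$), so the local closed sets glue. It would be cleaner to say this explicitly rather than to invoke Lemma \ref{opendense}, which concerns \emph{very} Zariski density and is not the statement you need. Second, in the transfer step you should note that $M_f$ being locally free (all localizations at primes free) over the Noetherian ring $A_f$ is equivalent to $M_f$ being finite projective, so that base change to $B=A\langle \varpi^n/f\rangle$ indeed yields a projective $B$-module; you implicitly use this equivalence.
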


\begin{proof}
We follow an argument from the proof of \cite[Thm.~5.1.2]{Han17}: The regular locus $X^{reg}$ of $X$ is Zariski open and dense, by the excellence of affinoid algebras. If $U \subset X$ is 
an affinoid open $\mathcal{M}$ is locally free at a regular point $x \in U$ if and only if $x$ is not in the support of $\oplus_{i = 1}^{\dim U}\mathrm{Ext}^i_{\OO(U)}(\mathcal{M}(U),\OO(U))$. This shows that $\mathcal{M}$ is locally free over a Zariski open subset $X_\mathcal{M}$ which is the intersection of $X^{reg}$ and another Zariski open subset of $X$ -- the complement of the support. Namely, if $U \subset X^{reg}$ is a connected affinoid open (so $\OO(U)$ is a regular domain) then the support of $\oplus_{i = 1}^{\dim U}\mathrm{Ext}^i_{\OO(U)}(\mathcal{M}(U),\OO(U))$ in $\Spec(\OO(U))$ has dimension $< \dim(U)$, by \cite[Cor.~3.5.11(c)]{BrH93} and therefore its complement is dense. We deduce that $X_\mathcal{M}$ is dense in $X$. 
\end{proof}

The following observation lies at the heart of our interpolation argument.

\begin{lem}\label{density}
Let $w:X \rightarrow W$ be a map of reduced equidimensional rigid spaces and let $\mathcal{M}$ be a coherent $\OO_X$-module. We assume that $X$ admits a  covering by affinoid opens $V$ such that 

\begin{itemize}
\item[(1)] $w(V) \subset W$ is affinoid open,
\item[(2)] The restriction $w|_V: V \rightarrow w(V)$ is finite,
\item[(3)] $\mathcal{M}(V)$ is a finite projective $\OO(w(V))$-module.
\end{itemize}
Let $Z\subset X$ be a very Zariski dense subset, and suppose $\phi \in \End_{\OO_X}(\mathcal{M})$ induces the zero map $\phi_z=0$ on the fibers $\mathcal{M}_z=\mathcal{M}\otimes_{\OO_X}\kappa(z)$ for all $z \in Z$. Then $\phi=0$. 
\end{lem}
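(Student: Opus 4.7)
First I would reduce to showing $\phi$ vanishes on each affinoid $V$ in the cover. Fix such $V$ and set $A = \OO(w(V))$, $B = \OO(V)$, $M = \M(V)$. By hypothesis $M$ is finite projective over $A$, and $\phi$ gives a $B$-linear -- hence $A$-linear -- endomorphism of $M$. Then $\End_A(M)$ is also finite projective over $A$, and corresponds to a locally free coherent sheaf on the reduced affinoid $w(V)$. It therefore suffices to show that the section $\phi$ of this sheaf vanishes at $y$ for $y$ in a Zariski dense subset of $w(V)$.

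By Lemma \ref{genfree} let $X_\M \subseteq V$ be the Zariski open dense locus on which $\M$ is locally free; by Lemma \ref{opendense}, $Z \cap X_\M$ is very Zariski dense in $X_\M$. For each $z_0 \in Z \cap X_\M$ I take an affinoid $U \subseteq X_\M$ around $z_0$ on which $\M$ is free, and use the accumulation property to shrink to an affinoid $V(z_0)$ with $z_0 \in V(z_0) \subseteq U$ and $Z \cap V(z_0)$ Zariski dense in $V(z_0)$. Trivializing $\M|_{V(z_0)}$ realizes $\phi|_{V(z_0)}$ as a matrix over the reduced ring $\OO(V(z_0))$; the hypothesis $\phi_z = 0$ for $z \in Z \cap V(z_0)$ says the entries of this matrix vanish on a Zariski dense set, and hence they vanish identically. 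Therefore $\phi|_{V(z_0)} = 0$.

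Let $U_1 \subseteq V$ denote the set of points at which the stalk of $\phi$, viewed as a section of the coherent $\OO_V$-module associated to $\End_B(M)$, is zero. Equivalently, $U_1$ is the complement of the Zariski closed support of the coherent submodule generated by $\phi$, so it is Zariski open. Since $U_1 \supseteq V(z_0)$ for every $z_0 \in Z \cap X_\M$, it contains the Zariski dense subset $Z \cap X_\M$ and hence is Zariski open and dense in the equidimensional $V$; in particular, the Zariski closed complement $V \setminus U_1$ meets no irreducible component of $V$, so $\dim(V \setminus U_1) < \dim V = \dim W$. Because the finite map $w$ is closed, $w(V \setminus U_1)$ is Zariski closed in $W$ of dimension $< \dim W$, so $W^{\ast} := W \setminus w(V \setminus U_1)$ is Zariski open and dense. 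For any $y \in W^{\ast}$ every preimage $z \in w^{-1}(y)$ lies in $U_1$, so $\phi$ vanishes on an affinoid neighborhood of $z$; reducing modulo $\mathfrak{m}_y$ then shows the block $\phi_{B_z}$ on the local Artin factor $M_{B_z}$ of the decomposition $M \otimes_A \kappa(y) = \bigoplus_{z \in w^{-1}(y)} M_{B_z}$ is zero. Summing over preimages gives $\phi(y) = 0$ for all $y \in W^{\ast}$, and the first paragraph then forces $\phi = 0$.

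The main obstacle, and the reason the mere Zariski density of $Z$ would not suffice, is the promotion of the fiberwise vanishing $\phi_z = 0$ on $Z$ to vanishing of $\phi$ on a whole affinoid neighborhood of each $z \in Z$. A naive argument using only $B_z$-linearity and the fiber vanishing shows only that $\phi_{B_z}$ is nilpotent on $M_{B_z}$, hence that $\phi(y)$ is nilpotent, which cannot by itself force $\phi = 0$. It is precisely the accumulation property of very Zariski density, combined with the generic freeness of $\M$ from Lemma \ref{genfree}, that converts pointwise vanishing on $Z$ into vanishing of matrix entries on a full affinoid neighborhood; the equidimensionality of $X$ together with the properness of the finite map $w$ then furnish the dimension count that propagates this to a Zariski dense subset of $W$.
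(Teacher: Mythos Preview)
Your argument has a real gap at the point where you invoke Lemma~\ref{opendense}. Having fixed an affinoid $V$ from the cover, you take $X_\M \subseteq V$ to be the free locus of $\M|_V$ and then assert that $Z \cap X_\M$ is very Zariski dense in $X_\M$. But Lemma~\ref{opendense} requires $Z$ to be very Zariski dense in the ambient space, which here is $V$; the hypothesis only gives very Zariski density of $Z$ in $X$, and that says nothing about $Z \cap V$ for an arbitrary affinoid open $V$. Indeed $Z \cap V$ may well be empty --- think of a very Zariski dense $Z$ concentrated entirely in one region of $X$ while $V$ lies elsewhere. Your subsequent claim that $U_1$ contains a Zariski dense subset of $V$, and hence the dimension bound on $V \setminus U_1$, is therefore unsupported.

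The paper circumvents this by reversing the order of the two reductions. It first takes $X_\M \subset X$ to be the \emph{global} free locus and applies Lemma~\ref{opendense} with $X$ itself as the ambient space, which is legitimate, to deduce $\phi|_{X_\M}=0$ identically. Only then does it localise to a member $V$ of the cover; the relevant dense open is now $V \cap X_\M$, and its density in $V$ follows from the purely dimension-theoretic fact that $X \setminus X_\M$ has dimension $< \dim X = \dim V$, with no further appeal to $Z$. Your proof is easily repaired along the same lines: run your second paragraph globally on $X$, conclude $\phi|_{X_\M}=0$, and in the third paragraph replace ``$Z \cap X_\M$'' by ``$X_\M \cap V$''. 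Once fixed, your use of the Artinian decomposition $M \otimes_A \kappa(y) = \bigoplus_z M_{B_z}$ is a perfectly valid alternative to the paper's restriction to the \'etale locus $w(V)_0$ --- the vanishing of $\phi$ on a full neighbourhood of each preimage $z$ (not merely on the fibre $\M_z$) is exactly what kills the whole Artin block and makes the \'etale hypothesis unnecessary.
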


\begin{proof}
First we restrict to the Zariski open and dense set $X_\mathcal{M}$ from Lemma \ref{genfree}. Since $\mathcal{M}$ is locally free over $X_\mathcal{M}$, the locus in $X_\mathcal{M}$ where $\phi$ vanishes is a Zariski closed subset. By Lemma \ref{opendense}, this locus also contains a Zariski dense set of points (namely $Z \cap X_{\mathcal{M}}$) so we infer that $\phi|_{X_\mathcal{M}}=0$.
    
Now we let $V \subset X$ be an affinoid open forming part of the cover described in the statement. Let $w(V)_0\subset w(V)$ be the (Zariski open and dense -- since $W$ is reduced) locus where the map $V \rightarrow w(V)$ is finite \'{e}tale. 
    
Since $X \backslash X_\mathcal{M} \subset X$ is a Zariski closed subset of dimension $<\dim X$, the set $W_1 := w\big(V \cap (X \backslash X_\mathcal{M})\big)$ is a Zariski closed subset of $w(V)$ with dimension $<\dim X=\dim W$. So $w(V) \backslash W_1$ is Zariski open and dense in $w(V)$.
    
We deduce that $w(V)_0 \cap (w(V) \backslash W_1)$ is a Zariski dense subset of $w(V)$. Moreover, $\phi$ induces the zero map on the fibers $\mathcal{M}(V)\otimes_{\OO(w(V))}\kappa(y)$ for all $y$ in this dense intersection: Use that $w|_V$ is \'etale at $y$, so if $x_{1},\ldots,x_{r}$ are the preimages of $y$ in $V$, then 
$$
\mathcal{M}(V)\otimes_{\OO(w(V))}\kappa(y) \simeq \bigoplus_{i=1}^{r}\mathcal{M}(V) \otimes_{\OO(V)}\kappa(x_{i}) 
$$
and we know that $\phi$ acts as zero on each $\mathcal{M}(V) \otimes_{\OO(V)}\kappa(x_{i})$ since $x_i \in X_{\mathcal{M}}$ (otherwise $y=w(x_i)\in W_1$), as observed in the first paragraph of the proof. We conclude that $\phi=0$ on $\mathcal{M}(V)$: Indeed $\mathcal{M}(V)$ is a finite projective $\OO(w(V))$-module so the points $y \in w(V)$ where 
$\phi$ vanishes on the fiber form a Zariski closed subset which contains $w(V)_0 \cap (w(V) \backslash W_1)$. Since $W$ is reduced $\phi_{\mathcal{M}(V)}=0$. Since $V$ was arbitrary, we must have $\phi=0$ on $\mathcal{M}$ as desired.
\end{proof}

We now return to the notation of section \ref{ev}. We have defined the eigenvariety $Y(K^p, \overline{r})$ to be the (scheme-theoretic) support of the coherent sheaf 
$\mathcal{M}$ over $X_{\overline{r}}\times \hat{T}$. It comes equipped with a natural weight morphism $\omega: Y(K^p,\bar{r})\rightarrow \WW$ defined as the composition of maps
$$
Y(K^p,\bar{r}) \hookrightarrow X_{\bar{r}}\times \hat{T} \overset{\text{pr}}{\longrightarrow} \hat{T} \overset{\text{can}}{\longrightarrow} \WW.
$$
The following Proposition summarises some important facts about $Y(K^p, \overline{r})$ and $\omega$.

\begin{lem}\label{ingredients}

The eigenvariety $Y(K^p, \overline{r})$ satisfies the following properties.

\begin{itemize}
\item[(1)] $Y(K^p, \overline{r})$ has an admissible cover by open affinoids $(U_i)_{i \in I}$ such that for all $i$ there exists an open affinoid $W_i\subset \mathcal{W}$ which fulfills 
(a) and (b) below;

\begin{itemize}
\item[(a)] The weight morphism $\omega: Y(K^p, \overline{r})\rightarrow \mathcal{W}$ induces, upon restriction to each irreducible component $C \subset U_i$, a finite surjective map $C \rightarrow W_i$.
\item[(b)] Each $\OO(U_i)$ is isomorphic to an $\OO(W_i)$-subalgebra of $\End_{\OO(W_i)}(P_i)$ for some finite projective $\OO(W_i)$-module $P_i$. 
\end{itemize}

\item[(2)] The classical points of non-critical slope are very Zariski dense in $Y(K^p, \overline{r})$.
\item[(3)] $Y(K^p, \overline{r})$ is reduced.
\end{itemize}
\end{lem}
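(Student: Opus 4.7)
The plan is to derive each of the three properties from the construction of $Y(K^p,\bar{r})$ via the eigenvariety machine, as developed in \cite{BHS16} and \cite{Em06}, supplemented with density and reducedness arguments that are by now standard in the theory.

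For (1), given $y \in Y(K^p,\bar{r})$, I would choose an affinoid open neighborhood $W_y \subset \WW$ of $\omega(y)$ and a quasi-compact admissible open $\Omega_y \subset X_{\bar{r}} \times \hat{T}$ containing $y$ whose image under $\omega$ lies in $W_y$. Restricting the coherent sheaf $\M$ to $\Omega_y$ and invoking the slope decomposition for the $Z^+$-action on the locally analytic Jacquet module (cf. \cite{Em06b} and \cite[Section 3]{BHS16}), one obtains, after possibly shrinking, a finite projective $\OO(W_y)$-module $P_y$ realizing $\Gamma(\Omega_y,\M)$. The $\OO(\Omega_y)$-action factors through a finite $\OO(W_y)$-subalgebra $A_y \subset \End_{\OO(W_y)}(P_y)$, and $\Sp(A_y)$ identifies with $Y(K^p,\bar{r}) \cap \Omega_y$. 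Shrinking $W_y$ further to make it irreducible, every irreducible component of $\Sp(A_y)$ maps finitely and surjectively to $W_y$.

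For (2), Zariski density of classical points of non-critical slope in eigenvarieties of this type is established in \cite{BHS16} (see also \cite[Section 2.3]{Em06}), by combining Zariski density of integral dominant weights in $\WW$ with the non-critical slope classicality criterion leading to Prop. \ref{ncr}. Very Zariski density then follows because the construction in (1) is local over $\WW$: shrinking the affinoid neighborhood still produces abundant classical non-critical points, since integral dominant weights are Zariski dense in any affinoid open of $\WW$.

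For (3), reducedness follows from (1) and (2) together with the semisimplicity of the Hecke action at classical non-critical points. Indeed, by Prop. \ref{ncr}, at such a point $y=(x,\delta)$ the fiber $\M_y'$ is canonically isomorphic to ${\bigotimes}_{v \in \Sigma_0} \pi_{x,v}^{K_v}$, an irreducible module on which $\OO(U_i)$ acts through its residue field at $y$. Therefore $P_i \otimes_{\OO(W_i)} \kappa(\omega(y))$ decomposes as a direct sum of copies of $\kappa(y)$ as an $\OO(U_i)$-module, and any nilpotent element of $\OO(U_i)$ acts as zero on these semisimple fibers. By Lemma \ref{density} applied to the induced endomorphism of $P_i$, this forces the nilradical to act as zero on $P_i$, and the faithful embedding $\OO(U_i) \hookrightarrow \End_{\OO(W_i)}(P_i)$ then gives reducedness. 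The main obstacle lies in (1), namely verifying that the slope decomposition yields a finite projective $\OO(W_y)$-module rather than just a finitely generated one; this requires carefully combining Emerton's essentially admissible Jacquet functor construction with Buzzard-type compact operator machinery, exactly as in \cite[Section 3]{BHS16}.
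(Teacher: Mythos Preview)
Your outline follows the same route as the paper, which simply cites \cite[Prop.~3.11, Thm.~3.19, Cor.~3.20]{BHS16} for (1)--(3), with the remark that for (3) the semisimplicity input needed in \cite{BHS16} can be supplied by the classical fact that Hecke operators at good places act semisimply on spaces of cusp forms.

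There is one genuine imprecision in your argument for (3). Invoking Prop.~\ref{ncr} tells you about the fiber $\M_y'$ at a single point $y$, but $\OO(U_i)$ acts on $\M_y'$ through $\kappa(y)$ automatically, classical or not, so that observation is not the relevant one. What is actually needed is semisimplicity of the $\OO(U_i)$-action on the full \emph{weight} fiber $P_i \otimes_{\OO(W_i)} \kappa(z)$ for $z$ in a Zariski-dense subset of $W_i$, and your ``Therefore'' does not follow: $P_i \otimes \kappa(z)$ is not in general a direct sum of the $\M_y$ for $y$ lying over $z$ unless $U_i \to W_i$ is \'etale at $z$, which is precisely what one cannot assume. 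The standard argument instead identifies $P_i \otimes \kappa(z)$, for classical dominant $z$ satisfying the small-slope bound dictated by the cover, with a finite-slope subspace of a space of classical automorphic forms, on which the Hecke and $U_p$-operators are known to act semisimply for global reasons; a nilpotent in $\OO(U_i)$ then acts as zero on such fibers and hence vanishes because $P_i$ is projective over the reduced ring $\OO(W_i)$. This is exactly the step the paper singles out in its parenthetical remark. Your appeal to Lemma~\ref{density} (with $X=W=W_i$) is harmless but overkill: the simpler fact that the vanishing locus of an endomorphism of a locally free sheaf over a reduced base is Zariski closed already suffices.
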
 

\begin{proof}
These can be proved in a similar way to the analogous statements in \cite{BHS16}. More precisely, we refer to Prop.~3.11, Thm.~3.19 and Cor.~3.20 of that paper. (Note that in the proof of Cor.~3.20 we can, in our setting, replace the reference to \cite{CEG$^+$16} with the well-known assertion that the Hecke operators at good places act semisimply on spaces of cuspidal 
automorphic forms.)
\end{proof}

Since $Y(K^p,\bar{r})$ projects to $X_{\bar{r}}$, its ring of functions $\OO(Y(K^p, \overline{r}))$ becomes an $R_{\bar{r}}$-algebra via the natural map
$R_{\bar{r}}\rightarrow \OO^0(X_{\bar{r}})$. Pushing forward the universal deformation of $\overline{r}$ (with a fixed choice of basis) then yields a continuous representation 
$$
r: \Gal_F \rightarrow \GL_n\big(\OO(Y(K^p, \overline{r}))\big).
$$
In particular, for every open affinoid $U \subset Y(K^p, \overline{r})$ we may specialize $r$ further and arrive at a continuous representation 
$r: \Gal_F \rightarrow \GL_n\big(\OO(U)\big)$. We may in fact take $\OO^0(U)$ here (the functions bounded by one), but we will not need that.

It follows from Proposition \ref{wd} that for $v\in \Sigma_0$, an open affinoid $U \subset Y(K^p, \overline{r})$, and a fixed choice of lift of geometric Frobenius $\Phi=\Phi_{\tilde{v}}$ in $W_{F_{\tilde{v}}}$, we obtain a Weil--Deligne representation $\mathrm{WD}_{\overline{r},\tilde{v}}(U)$ over $\OO(U)$. Moreover, this construction is obviously compatible as we vary $U$ in the sense that if $U' \subset U$, then $\mathrm{WD}_{\overline{r},\tilde{v}}(U)$ pulls back to $\mathrm{WD}_{\overline{r},\tilde{v}}(U')$ over $U'$ (by the uniqueness in Proposition \ref{wd}). To be precise, there is a natural isomorphism of Weil--Deligne representations over $\OO(U')$,
$$
\mathrm{WD}_{\overline{r},\tilde{v}}(U') \simeq \mathrm{WD}_{\overline{r},\tilde{v}}(U) \otimes_{\OO(U)}\OO(U').
$$
Now, for a tuple of Weil elements $\tau = (\tau_{\tilde{v}}) \in \prod_{v \in \Sigma_0} W_{F_{\tilde{v}}}$ we obtain functions 
$$
a_{\tau,U}:=\prod_{v\in\Sigma_0} \tr\big((\tau_{\tilde{v}}|\mathrm{WD}_{\overline{r},\tilde{v}}(U)\big) \in \OO(U)
$$
as defined above in Lemma \ref{atau}. By the compatibility just mentioned, $a_{\tau,U'}=\text{res}_{U,U'}(a_{\tau,U})$ when $U'\subset U$. It follows that we may glue the 
$a_{\tau,U}$ and get a function $a_{\tau}=a_{\tau,Y(K^p, \overline{r})}$ on the whole eigenvariety $Y(K^p,\bar{r})$ with the interpolation property in Lemma \ref{atau}. 

\begin{prop}\label{fam}
The operator $f_\tau^{K_{\Sigma_0}}$ acts on $\mathcal{M}$ via scaling by $a_\tau$, for every $\tau \in \prod_{v\in\Sigma_0}W_{F_{\tilde{v}}}$.
\end{prop}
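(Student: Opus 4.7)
My plan is to apply the interpolation Lemma \ref{density} to the endomorphism
$$
\phi := f_\tau^{K_{\Sigma_0}} - a_\tau \cdot \mathrm{id}_{\mathcal{M}}
$$
of $\mathcal{M}$ regarded as a coherent sheaf on its schematic support $X := Y(K^p,\bar{r})$. Here $a_\tau \in \mathcal{O}(Y(K^p,\bar{r}))$ is the global function produced by gluing the local sections from Lemma \ref{atau}, and the central Hecke operator $f_\tau^{K_{\Sigma_0}}$ acts $\mathcal{O}_X$-linearly on $\mathcal{M}$ because the $\mathcal{H}(K_{\Sigma_0})$-action commutes with both the $R_{\bar{r}}$-action and the $\hat{T}$-action. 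The proposition is then equivalent to the statement $\phi = 0$.

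First I would verify the hypotheses of Lemma \ref{density} with $w = \omega: Y(K^p,\bar{r}) \to \mathcal{W}$. Reducedness of $X$ is Lemma \ref{ingredients}(3), and $\mathcal{W}$ is smooth (a disjoint union of open polydiscs). Equidimensionality of $X$ of dimension $n[F^+:\mathbb{Q}]$ follows from Lemma \ref{ingredients}(1)(a), since every irreducible component of every $U_i$ maps finitely and surjectively onto the affinoid $W_i \subset \mathcal{W}$. The affinoid cover $(U_i)_i$ of Lemma \ref{ingredients}(1) then satisfies conditions (1) and (2) of Lemma \ref{density} (Lemma \ref{ingredients}(1)(b) shows $\mathcal{O}(U_i)$ is itself finite over $\mathcal{O}(W_i)$), and condition (3) comes from identifying $\mathcal{M}(U_i)$ with the finite projective $\mathcal{O}(W_i)$-module $P_i$ built into the eigenvariety construction of \cite{BHS16}.

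Next I would show that $\phi$ vanishes on the fibers $\mathcal{M}_y$ for every point $y$ in the very Zariski dense set $Z$ of classical points of non-critical slope provided by Lemma \ref{ingredients}(2). For such $y = (x,\delta)$, Proposition \ref{ncr} gives the $\mathcal{H}(K_{\Sigma_0})$-equivariant isomorphism
$$
\mathcal{M}_y' \simeq \bigotimes_{v \in \Sigma_0}\pi_{x,v}^{K_v},
$$
and combining Lemma \ref{weil} with Lemma \ref{atau}, $f_\tau^{K_{\Sigma_0}}$ acts on the right-hand side by the scalar $a_\tau(y)$. Thus $\phi$ acts as zero on $\mathcal{M}_y'$, hence on the finite-dimensional $\mathcal{M}_y$ as well. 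Lemma \ref{density} then forces $\phi = 0$ on all of $\mathcal{M}$, which is the desired statement.

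I expect the only technical point worth spelling out to be the identification of $\mathcal{M}(U_i)$ with the finite projective $\mathcal{O}(W_i)$-module $P_i$ appearing in Lemma \ref{ingredients}(1)(b) (as opposed to merely being a quotient or subquotient of it); this is essentially inherited from the Buzzard-style construction used in \cite{BHS16} and \cite{Em06}. Once that identification is in hand, conditions (1)--(3) of Lemma \ref{density} and the fiberwise vanishing at non-critical classical points are routine, and the interpolation machinery delivers the proposition directly.
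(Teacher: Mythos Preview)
Your proposal is correct and follows exactly the same approach as the paper's proof: define $\phi = f_\tau^{K_{\Sigma_0}} - a_\tau$, use Proposition \ref{ncr} and Lemma \ref{weil} to see that $\phi$ vanishes on fibers at classical non-critical points, and then invoke Lemma \ref{density} with the hypotheses supplied by Lemma \ref{ingredients}. The paper's own proof is in fact more terse than yours, and your remark about identifying $\mathcal{M}(U_i)$ with $P_i$ is a fair point that the paper leaves implicit in the reference to \cite{BHS16}.
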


\begin{proof}
We must show the endomorphism $\phi:=f_\tau^{K_{\Sigma_0}}-a_\tau$ of $\mathcal{M}$ equals zero. By the discussion at the beginning of this section (just prior to \ref{atau}) we know $\phi$ induces the zero map on the fibres of $\mathcal{M}$ at classical points of non-critical slope. We are now done by Lemma \ref{density} 
(together with Lemma \ref{ingredients}).
\end{proof}

By specialization at any point $y=(x,\delta)\in Y(K^p,\bar{r})$ we immediately find that $f_{\tau}^{K_{\Sigma_0}}$ acts on the fiber $\M_y$ (and hence its dual $\M_y'$) via scaling by $a_{\tau}(x)$. We summarize this below.

\begin{cor}\label{et}
Let $y \in Y(K^p,\bar{r})$ be an arbitrary point. Then $f_{\tau}^{K_{\Sigma_0}}$ acts on $\M_y'$ via scaling by 
$$
\prod_{v\in \Sigma_0} \tr \big( \tau_{\tilde{v}}| \WD(r_x|_{\Gal_{F_{\tilde{v}}}}) \big).
$$
\end{cor}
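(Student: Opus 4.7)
The plan is to deduce this corollary by a routine specialization argument at the point $y$, building on Proposition \ref{fam}. By that proposition, the endomorphism $f_{\tau}^{K_{\Sigma_0}}$ of $\M$ equals multiplication by the global section $a_{\tau} \in \OO(Y(K^p,\bar{r}))$. Forming the fiber at $y=(x,\delta)$ -- that is, tensoring the stalk $\M_y$ over $\OO_{Y(K^p,\bar{r}),y}$ with the residue field $\kappa(y)$ -- preserves this identity, so $f_{\tau}^{K_{\Sigma_0}}$ acts on $\M_y$ as scalar multiplication by the value $a_{\tau}(y) \in \kappa(y)$.

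It remains to rewrite $a_{\tau}(y)$ in terms of Weil-Deligne traces at $x$. Since $a_{\tau}$ was constructed by gluing the local functions $a_{\tau,U}$ of Lemma \ref{atau}, and each such $a_{\tau,U}$ is pulled back from an affinoid neighborhood of $x$ in $X_{\bar{r}}$ via the natural map $Y(K^p,\bar{r}) \hookrightarrow X_{\bar{r}} \times \hat{T} \to X_{\bar{r}}$, the value $a_{\tau}(y)$ depends only on $x$. The interpolation property stated in Lemma \ref{atau} then identifies it with $\prod_{v\in \Sigma_0} \tr\big(\tau_{\tilde{v}}\mid \WD(r_x|_{\Gal_{F_{\tilde{v}}}})\big)$. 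Transferring scalar multiplication from $\M_y$ to the $\kappa(y)$-linear dual $\M_y'$ changes nothing, since scalars are self-transpose, so the desired formula follows.

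In this sense there is no real obstacle here: the essential content has already been packaged into Proposition \ref{fam}, whose proof used Lemma \ref{density} to propagate the scaling identity from the very Zariski dense subset of classical points of non-critical slope -- where it is visible via Proposition \ref{ncr} combined with Lemma \ref{weil} -- to the whole eigenvariety $Y(K^p,\bar{r})$. The corollary is the payoff of that interpolation, now read off pointwise.
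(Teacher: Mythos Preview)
Your proof is correct and follows the same route as the paper: specialize Proposition \ref{fam} at $y$ to get scaling by $a_\tau(y)=a_\tau(x)$, then invoke Lemma \ref{atau} to identify this value with the product of Weil--Deligne traces. The paper's own proof is a one-liner (``immediate consequence of Proposition \ref{fam}''), with the specialization remark made just before the statement; your version simply unpacks this in slightly more detail.
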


\begin{proof}
This is an immediate consequence of Proposition \ref{fam}.
%Pick a small enough neighborhood $\Omega$ around $y$ such that $\omega: \Omega \overset{\sim}{\longrightarrow} \omega(\Omega)$ and use the previous Proposition \ref{fam}. 
\end{proof}

%-----------------------------------------NEW SECTION--------

\section{Interpolation of central characters}\label{central}

In this section we will reuse parts of the argument from the previous section \ref{tr} to interpolate the central characters $\omega_{\pi_{x,v}}$ across the eigenvariety. We include it here mostly for future reference. It will only be used in this paper in the very last paragraph of Remark \ref{ext} below.

For $v \in \Sigma_0$ we let $Z(U(F_v^+))$ be the center of $U(F_v^+)$ (recall that its {\it{Bernstein}} center is denoted by $\frak{Z}$). There is a natural homomorphism
$Z(U(F_v^+))\rightarrow \mathcal{Z}(U(F_v^+),K_v)^{\times}$ which takes $\xi_v$ to the double coset operator $[K_v \xi_vK_v]$. Taking the product over $v \in \Sigma_0$ we get an analogous map $Z(U(F_{\Sigma_0}^+))\rightarrow \mathcal{Z}(K_{\Sigma_0})^{\times}$ which we will denote $\xi=(\xi_v)_{v\in \Sigma_0}\mapsto h_{\xi}^{K_{\Sigma_0}}=
\otimes_{v\in \Sigma_0} [K_v \xi_vK_v]$. Thus $h_{\xi}^{K_{\Sigma_0}}$ operates on $\M$ and its fibers. 

If $y=(x,\delta)\in Y(K^p,\bar{r})(E)$ is a classical point of non-critical slope the action of $h_{\xi}^{K_{\Sigma_0}}$ on $\M_y'\simeq \otimes_{v\in \Sigma_0}\pi_{x,v}^{K_v}$ is clearly just multiplication by $\prod_{v\in \Sigma_0}\omega_{\pi_{x,v}}(\xi_v)$. This property extrapolates to {\it{all}} points $y$ by mimicking the proof in section \ref{tr}, as we will now explain.

For $\Sp(A) \subset X_{\bar{r}}$ we have the Weil-Deligne representation $\WD_{\bar{r},\tilde{v}}$ on $A^n$. Consider its determinant 
$\det(\WD_{\bar{r},\tilde{v}})$ as a character $F_{\tilde{v}}^{\times}\rightarrow A^{\times}$ via local class field theory. Note that $Z(U(F_v^+))\simeq Z(\GL_n(F_{\tilde{v}}))\simeq F_{\tilde{v}}^{\times}$ which allows us to view the product $\prod_{v\in \Sigma_0} \det(\WD_{\bar{r},\tilde{v}})$ as a character $\omega:Z(U(F_{\Sigma_0}^+))\rightarrow A^{\times}$. 
Clearly the specialization of $\omega$ at any $x \in \Sp(A)$ is $\omega_x=\otimes_{v\in \Sigma_0}\omega_{\pi_{x,v}}:  Z(U(F_{\Sigma_0}^+))\rightarrow \kappa(x)^{\times}$ by the interpolative property of 
$\WD_{\bar{r},\tilde{v}}$.

By copying the proof of Proposition \ref{fam} almost verbatim, one easily deduces the following.

\begin{prop}\label{center}
There is a homomorphism $\omega: Z(U(F_{\Sigma_0}^+))\rightarrow \OO(Y(K^p,\bar{r}))^{\times}$ such that $h_{\xi}^{K_{\Sigma_0}}: \M \rightarrow \M$ is 
multiplication by $\omega(\xi)$ for all $\xi$. In particular, for any point $y=(x,\delta)\in Y(K^p,\bar{r})$, the action of $h_{\xi}^{K_{\Sigma_0}}$ on $\M_y'$ is scaling by 
$\prod_{v\in \Sigma_0}\omega_{\pi_{x,v}}(\xi_v)$.
\end{prop}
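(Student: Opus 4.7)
The plan is to carry out the argument indicated by the authors, namely to imitate the proof of Proposition \ref{fam} step by step, replacing $f_\tau^{K_{\Sigma_0}}$ by the central Hecke operator $h_\xi^{K_{\Sigma_0}}$ and $a_\tau$ by the interpolated central character $\omega(\xi)$. The ingredients are all in place: what needs to be done is to glue a global $\omega$ on $Y(K^p,\bar r)$ and verify vanishing of $h_\xi^{K_{\Sigma_0}}-\omega(\xi)$ at a sufficiently rich set of points.

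First I would construct $\omega$ on $Y(K^p,\bar r)$ by gluing. For each affinoid $\Sp(A)\subset X_{\bar r}$ the paragraph preceding the statement produces, via $\prod_{v\in\Sigma_0}\det(\WD_{\bar r,\tilde v})$ and the identification $Z(U(F_v^+))\simeq F_{\tilde v}^\times$ from local class field theory, a character $\omega_A\colon Z(U(F_{\Sigma_0}^+))\to A^\times$. If $\Sp(B)\hookrightarrow\Sp(A)$ is an inclusion of affinoids, the uniqueness clause of Proposition \ref{wd} forces $\WD_{\bar r,\tilde v}(A)\otimes_A B\simeq\WD_{\bar r,\tilde v}(B)$, hence $\omega_A\otimes_A B=\omega_B$. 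The $\omega_A$ therefore glue to a character $Z(U(F_{\Sigma_0}^+))\to\OO(X_{\bar r})^\times$, and I pull back along $Y(K^p,\bar r)\hookrightarrow X_{\bar r}\times\hat T\to X_{\bar r}$ to get the desired $\omega\colon Z(U(F_{\Sigma_0}^+))\to\OO(Y(K^p,\bar r))^\times$. By construction and the interpolative property (\ref{inter}), its specialization at any $x\in X_{\bar r}$ is $\prod_{v\in\Sigma_0}\omega_{\pi_{x,v}}$.

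Next, fix $\xi\in Z(U(F_{\Sigma_0}^+))$ and consider the endomorphism $\phi_\xi:=h_\xi^{K_{\Sigma_0}}-\omega(\xi)$ of $\M$. I would invoke Lemma \ref{density} for the weight morphism $Y(K^p,\bar r)\to\WW$: Lemma \ref{ingredients} provides the required admissible cover by affinoids satisfying the finiteness and projectivity hypotheses (1)--(3) of Lemma \ref{density}, and also asserts that the classical points of non-critical slope form a very Zariski dense subset $Z$. It thus suffices to show $\phi_\xi$ vanishes on $\M_y$ for every $y=(x,\delta)\in Z$. For such a $y$, Proposition \ref{ncr} gives an isomorphism of $\HH(K_{\Sigma_0})$-modules $\M_y'\simeq\otimes_{v\in\Sigma_0}\pi_{x,v}^{K_v}$. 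Because $\xi_v$ is central, the double coset operator $[K_v\xi_vK_v]$ coincides with the action of $\xi_v$ itself on $\pi_{x,v}^{K_v}$ and therefore scales by $\omega_{\pi_{x,v}}(\xi_v)$; hence $h_\xi^{K_{\Sigma_0}}$ acts on $\M_y'$ by $\prod_{v\in\Sigma_0}\omega_{\pi_{x,v}}(\xi_v)$, which matches the specialization $\omega(\xi)(y)$ from the previous paragraph. Lemma \ref{density} then yields $\phi_\xi=0$ globally, and the second assertion of the Proposition follows by specialization at any $y\in Y(K^p,\bar r)$.

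There is no real obstacle here beyond the bookkeeping already present in the proof of Proposition \ref{fam}; the one point to double-check carefully is the normalization, i.e.\ that under the Harris--Taylor conventions for $\rec(\cdot)$ and the twist $|\det|^{(1-n)/2}$ appearing in (\ref{pix}), the product $\prod_v\det(\WD_{\bar r,\tilde v})$ really does specialize to the literal central character $\prod_v\omega_{\pi_{x,v}}$ rather than to a twist of it. This is the only place a subtlety could hide, but it is a routine computation with local class field theory applied once and for all at classical points, and then propagated to the whole eigenvariety by the density argument above.
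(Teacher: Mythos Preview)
Your proposal is correct and is precisely what the paper intends: the paper's own ``proof'' consists of the single sentence ``By copying the proof of Proposition \ref{fam} almost verbatim, one easily deduces the following,'' and you have faithfully unpacked that. The gluing of $\omega$ via uniqueness in Proposition \ref{wd}, the appeal to Lemma \ref{density} using the cover from Lemma \ref{ingredients}, and the verification at classical non-critical points via Proposition \ref{ncr} are exactly the steps the reader is meant to supply. Your remark on the normalization (whether $\det(\WD_{\bar r,\tilde v})$ specializes to $\omega_{\pi_{x,v}}$ or a fixed twist of it) is a fair caution, but as you observe it is harmless for the argument and the paper itself asserts the untwisted statement in the paragraph preceding the Proposition.
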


%------------------------------------------------------------------------

\section{Proof of the main result}\label{pf}

We now vary $K_{\Sigma_0}$ and reinstate the notation $\M_{K^p}$ (instead of just writing $\M$) to stress the dependence on $K^p=K_{\Sigma_0}K^{\Sigma}$. Suppose $K_{\Sigma_0}'\subset K_{\Sigma_0}$
is a compact open subgroup, and let $K'^p=K_{\Sigma_0}'K^{\Sigma}$. Recall that the global sections of $\M_{K^p}$ is the {\it{dual}} of $J_B(\hat{S}(K^p,E)_{\m}^{\an})$. Thus we find a natural transition map $\M_{K'^p}\twoheadrightarrow \M_{K^p}$ of sheaves on $X_{\bar{r}}\times \hat{T}$. Taking their support we find that $Y(K^p,\bar{r})\hookrightarrow Y(K'^p,\bar{r})$. Passing to the dual fibers at a point $y\in Y(K^p,\bar{r})$ yields an embedding $\M_{K^p,y}' \hookrightarrow \M_{K'^p,y}'$ which is equivariant for the Hecke action
(i.e., compatible with the map $\mathcal{H}(K_{\Sigma_0}')\twoheadrightarrow \mathcal{H}(K_{\Sigma_0})$ given by $e_{K_{\Sigma_0}}\star(\cdot)\star e_{K_{\Sigma_0}}$).
The limit $\varinjlim_{K_{\Sigma_0}} \M_{K^p,y}'$ thus becomes an admissible representation of $U(F_{\Sigma_0}^+) \overset{\sim}{\longrightarrow} \prod_{v \in \Sigma_0 }\GL_n(F_{\tilde{v}})$ with coefficients in $\kappa(y)$. Subsequently we will use the next lemma to show it is of finite length.

\begin{lem}\label{sq}
Let $y \in Y(K^p,\bar{r})$ be any point. Let $\otimes_{v \in \Sigma_0} \pi_v$ be an arbitrary irreducible subquotient\footnote{Such exist by Zorn's lemma; any finitely generated subrepresentation admits an irreducible quotient.} of 
$\varinjlim_{K_{\Sigma_0}} \M_{K^p,y}'$. Then for all places $v \in \Sigma_0$ we have an isomorphism
$$
\WD(r_x|_{\Gal_{F_{\tilde{v}}}})^{ss}\simeq \rec(\BC_{\tilde{v}|v}(\pi_{v}) \otimes |\det|^{(1-n)/2})^{ss}.
$$
(Here $ss$ means semisimplification of the underlying representation $\tilde{\rho}$ of $W_{F_{\tilde{v}}}$, and setting $N=0$.) 
\end{lem}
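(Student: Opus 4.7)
The plan is to compute the scalar action of the Bernstein-center element $f_\tau = \otimes_{v\in\Sigma_0} f_{\tau_{\tilde{v}}}$ on the irreducible subquotient $\otimes_v\pi_v$ in two different ways, and then match the two expressions.

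First, I would invoke Corollary \ref{et}, which says that at every tame level $K_{\Sigma_0}$ the operator $f_\tau^{K_{\Sigma_0}} \in \mathcal{Z}(K_{\Sigma_0})$ acts on $\M_{K^p,y}'$ by the scalar $\prod_{v\in\Sigma_0} \tr\bigl(\tau_{\tilde v}\bigm|\WD(r_x|_{\Gal_{F_{\tilde v}}})\bigr)$, which does not depend on $K_{\Sigma_0}$. Since the transition maps $\M_{K^p,y}' \hookrightarrow \M_{K'^p,y}'$ (for $K_{\Sigma_0}' \subset K_{\Sigma_0}$) intertwine the respective actions of $f_\tau^{K_{\Sigma_0}}$ and $f_\tau^{K_{\Sigma_0}'}$ -- both being the image of the single element $f_\tau$ in the Bernstein center of $U(F_{\Sigma_0}^+)$ -- this scaling passes to the direct limit $\varinjlim_{K_{\Sigma_0}} \M_{K^p,y}'$ and hence to any subquotient, in particular to $\otimes_v\pi_v$. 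On the other hand, the defining property of $f_{\tau_{\tilde v}}$ from section \ref{llc} is that it scales every irreducible smooth representation $\pi_v$ of $U(F_v^+)$ by $\tr\bigl(\tau_{\tilde v}\bigm|\rec(\BC_{\tilde v|v}(\pi_v)\otimes|\det|^{(1-n)/2})\bigr)$. Consequently $f_\tau$ acts on the irreducible $\otimes_v \pi_v$ by the product of these local scalars.

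Equating the two expressions for every tuple $\tau = (\tau_{\tilde v})_{v\in\Sigma_0}$, I would next separate the variables: fixing $v_0 \in \Sigma_0$ and an arbitrary $\tau_{\tilde v_0} \in W_{F_{\tilde v_0}}$, and setting $\tau_{\tilde v} = 1$ for $v \neq v_0$, each such factor contributes $n$ to both sides (the common dimension of $\WD(r_x|_{\Gal_{F_{\tilde v}}})$ and $\rec(\BC_{\tilde v|v}(\pi_v)\otimes|\det|^{(1-n)/2})$); these nonzero factors cancel and yield
$$
\tr\bigl(\tau_{\tilde v_0}\bigm|\rec(\BC_{\tilde v_0|v_0}(\pi_{v_0})\otimes|\det|^{(1-n)/2})\bigr) = \tr\bigl(\tau_{\tilde v_0}\bigm|\WD(r_x|_{\Gal_{F_{\tilde v_0}}})\bigr)
$$
for all $\tau_{\tilde v_0} \in W_{F_{\tilde v_0}}$. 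Since two finite-dimensional representations of the group $W_{F_{\tilde v}}$ with identical character functions have isomorphic semisimplifications, and since semisimplification of a Weil--Deligne representation means semisimplifying the underlying $W_{F_{\tilde v}}$-representation and discarding the nilpotent $N$, this gives exactly the claimed isomorphism at each $v \in \Sigma_0$.

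With Corollary \ref{et} and the Scholze-style characterization of $f_\tau$ in place, no step is genuinely hard; the only point requiring a moment's care is the naturality argument in the first paragraph, namely that the scalar action of $f_\tau^{K_{\Sigma_0}}$ on each finite-level fiber really assembles into a scalar action of $f_\tau$ on the direct limit that restricts to arbitrary irreducible subquotients. This is purely formal from the fact that the Bernstein center acts as a natural endomorphism of the identity functor on smooth $U(F_{\Sigma_0}^+)$-representations, so its scalar on any subquotient agrees with its scalar on the ambient module.
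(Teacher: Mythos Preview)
Your proof is correct and follows essentially the same approach as the paper: compute the scalar action of $f_\tau$ on the subquotient in two ways using Corollary~\ref{et} and the defining property of $f_{\tau_{\tilde v}}$, then equate traces to conclude the semisimplifications agree. Your separation-of-variables step (setting $\tau_{\tilde v}=1$ for $v\neq v_0$) is a slightly more explicit version of what the paper dispatches with the phrase ``linear independence of characters,'' and your careful remarks on passing the scalar action to the direct limit and its subquotients make explicit what the paper leaves implicit.
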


\begin{proof}
By Lemma \ref{et} we know that $f_{\tau}$ acts on $\varinjlim_{K_{\Sigma_0}} \M_{K^p,y}'$ via scaling by $a_{\tau}(x)$. On the other hand, by the proof of Lemma \ref{weil} we know 
what $f_{\tau}(\otimes_{v \in \Sigma_0}\pi_v)$ is. By comparing the two expressions we find that
$$
\prod_{v\in \Sigma_0} \tr \big( \tau_{\tilde{v}}| \WD(r_x|_{\Gal_{F_{\tilde{v}}}}) \big)=\prod_{v \in \Sigma_0} \tr \big(\tau_{\tilde{v}}|\rec(\BC_{\tilde{v}|v}(\pi_{v}) \otimes |\det|^{(1-n)/2})\big)
$$
for all tuples $\tau$. This shows that $\WD(r_x|_{\Gal_{F_{\tilde{v}}}})$ and $\rec(\BC_{\tilde{v}|v}(\pi_{v}) \otimes |\det|^{(1-n)/2})$ have the same semisimplification 
for all $v \in \Sigma_0$ by 'linear independence of characters'. \end{proof}

We employ Lemma \ref{sq} to show $\varinjlim_{K_{\Sigma_0}} \M_{K^p,y}'$ has finite length (which for an admissible representation is equivalent to being finitely generated by Howe's Theorem, cf. \cite[4.1]{BZ76}). 

\begin{lem}\label{length}
The length of $\varinjlim_{K_{\Sigma_0}} \M_{K^p,y}'$ as a $U(F_{\Sigma_0}^+)$-representation is finite, and uniformly bounded in $y$ on quasi-compact subvarieties of 
$Y(K^p,\bar{r})$.
\end{lem}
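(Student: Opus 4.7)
The plan is to use Lemma \ref{sq} to pin down the possible irreducible constituents of $V_y := \varinjlim_{K_{\Sigma_0}} \M_{K^p, y}'$, find a single compact open subgroup at which all of them have non-zero fixed vectors, and then combine admissibility with exactness of the fixed-vector functor to bound the length.

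By Lemma \ref{sq}, every irreducible subquotient $\otimes_{v\in \Sigma_0} \pi_v$ of $V_y$ satisfies
$$\rec(\BC_{\tilde v|v}(\pi_v) \otimes |\det|^{(1-n)/2})^{ss}\simeq \WD(r_x|_{\Gal_{F_{\tilde v}}})^{ss}.$$
In particular the supercuspidal support of $\pi_v$ is determined by $r_x$, so $\pi_v$ lies in the finite set of irreducible constituents of a fixed parabolic induction. Moreover, the Artin conductor of $\pi_v$ depends only on the Swan conductor of the underlying Weil representation (which is invariant under semisimplification and under setting $N=0$) plus a quantity bounded by $n$, so it is bounded in terms of $\WD(r_x|_{\Gal_{F_{\tilde v}}})$. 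Hence there is a compact open $K^{(0)}_{\Sigma_0} = \prod_v K^{(0)}_v \subset K_{\Sigma_0}$ such that $\pi_v^{K^{(0)}_v} \neq 0$ for every such $\pi_v$.

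Since $V_y$ is admissible and $(-)^{K^{(0)}_{\Sigma_0}}$ is exact on smooth $U(F_{\Sigma_0}^+)$-representations (by averaging with the normalized Haar measure), any finite-length subquotient $W$ of $V_y$ satisfies
$$\mathrm{length}(W) \leq \dim_{\kappa(y)} W^{K^{(0)}_{\Sigma_0}} \leq \dim_{\kappa(y)} V_y^{K^{(0)}_{\Sigma_0}},$$
the first inequality because each Jordan--H\"older factor of $W$ contributes at least one dimension. But $V_y^{K^{(0)}_{\Sigma_0}}$ is identified with $\M_{K^{(0)}_{\Sigma_0}K^\Sigma, y}'$, which is finite-dimensional over $\kappa(y)$. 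Hence $V_y$ itself has finite length, bounded by $\dim_{\kappa(y)} V_y^{K^{(0)}_{\Sigma_0}}$.

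For the uniform bound on a quasi-compact $Y_0 \subset Y(K^p, \bar r)$, cover $Y_0$ by finitely many affinoid opens $\Omega_i$, each projecting into an affinoid $\Sp(A_i) \subset X_{\bar r}$. Proposition \ref{wd} applied to $A_i$ yields, for each $\tilde v$, an open subgroup $J_{i,\tilde v} \subset I_{F_{\tilde v}}$ on which the universal Weil--Deligne representation is the exponential of a fixed nilpotent matrix. This forces the Artin conductor of $\WD(r_x|_{\Gal_{F_{\tilde v}}})$ to be uniformly bounded as $x$ ranges over the image of $\Omega_i$, so a single $K^{(0)}_{\Sigma_0}$ works on $\Omega_i$; coherence of $\M_{K^{(0)}_{\Sigma_0} K^\Sigma}$ uniformly bounds $\dim_{\kappa(y)} \M_{K^{(0)}_{\Sigma_0}K^\Sigma, y}'$ there (realize the sheaf as a quotient of a free module of fixed rank on an affinoid trivialization). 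The main obstacle I expect is the uniform conductor bound, but this is essentially just the family version of Grothendieck's monodromy theorem already proved in Proposition \ref{wd}.
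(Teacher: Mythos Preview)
Your proof is correct and follows essentially the same strategy as the paper: use Lemma \ref{sq} to bound the conductors of all possible irreducible constituents, find a single level $K_{\Sigma_0}^{(0)}$ at which they all have invariants, and bound the length by the (finite) fiber dimension at that level. The only notable difference is in the uniformity step: the paper invokes Livn\'e's bound $c(\WD(r_x|_{\Gal_{F_{\tilde v}}})) \le c(\bar r|_{\Gal_{F_{\tilde v}}}) + n$ to get a single $K_{\Sigma_0}^{(0)}$ depending only on $\bar r$ (so the same level works over the entire eigenvariety, and quasi-compactness is needed only to bound the fiber dimension), whereas you obtain the uniform conductor bound from Proposition \ref{wd} on each affinoid of a finite cover---both work, but the paper's route is slightly cleaner and yields a sharper statement.
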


\begin{proof}
We first show finiteness. Suppose the direct limit is of infinite length, and choose an infinite proper chain of $U(F_{\Sigma_0}^+)$-invariant subspaces
$$
\varinjlim_{K_{\Sigma_0}} \M_{K^p,y}' =V_0 \supset V_1 \supset V_2 \supset V_3 \supset \cdots \y \y \y \y V_i \neq V_{i+1}.
$$
Taking $K_{\Sigma_0}$-invariants (which is exact as $\text{char}_E=0$) we find a decreasing chain of $\mathcal{H}(K_{\Sigma_0})$-submodules $V_i^{K_{\Sigma_0}}\subset \M_{K^p,y}'$. The fiber is finite-dimensional so this chain must become stationary. I.e., $V_i/V_{i+1}$ has {\it{no}} nonzero $K_{\Sigma_0}$-invariants for $i$ large enough. If we can show that every irreducible subquotient $\otimes_{v \in \Sigma_0} \pi_v$ of $\varinjlim_{K_{\Sigma_0}} \M_{K^p,y}'$ {\it{has}} nonzero $K_{\Sigma_0}$-invariants, we are done. We will show that we can find a small enough $K_{\Sigma_0}$ with this last property.

The local Langlands correspondence preserves $\epsilon$-factors, and hence conductors. (See \cite{JPSS} for the definition of conductors in the $\GL_n$-case, and \cite[p. 21]{Tat79} for the Artin conductor of a Weil-Deligne representation.) Therefore, for every place $v \in \Sigma_0$ we get a bound on the conductor of $\BC_{\tilde{v}|v}(\pi_{v})$:
\begin{equation}\label{bd}
\begin{split}
c(\pi_v) &:= c(\BC_{\tilde{v}|v}(\pi_{v})) \\
   &=c\big( \rec(\BC_{\tilde{v}|v}(\pi_{v}) \otimes |\det|^{(1-n)/2})\big)\\
   &\leq c\big( \rec(\BC_{\tilde{v}|v}(\pi_{v}) \otimes |\det|^{(1-n)/2})^{ss}\big)+n \\
   &\overset{\ref{sq}}{=}c\big(\WD(r_x|_{\Gal_{F_{\tilde{v}}}})^{ss}\big)+n.
\end{split}
\end{equation}
(In the inequality we used the following general observation: If $(\tilde{\rho},\mathcal{N})$ is a Weil-Deligne representation on a vector space $S$, its conductor is
$c(\tilde{\rho})+\dim S^I - \dim (\ker \mathcal{N})^I$, where $I$ is shorthand for inertia; $c(\tilde{\rho})$ is the usual Artin conductor, which is clearly invariant under semisimplification: $c(\tilde{\rho})$ only depends on $\tilde{\rho}|_I$ which is semisimple because it has finite image.)
This shows $c(\pi_v)$ is bounded in terms of $x$. If we take $K_{\Sigma_0}$ small enough, say $K_{\Sigma_0}=\prod_{v \in \Sigma_0} K_v$ where
$$
K_v=i_{\tilde{v}}^{-1}\{g \in \GL_n(\OO_{F_{\tilde{v}}}): (g_{n1},\ldots,g_{nn})\equiv (0,\ldots,1) \text{ mod $\varpi_{F_{\tilde{v}}}^N$} \}
$$
with $N$ greater than the right-hand side of the inequality (\ref{bd}), then every constituent $\otimes_{v \in \Sigma_0} \pi_v$ as above satisfies $\pi_v^{K_v}\neq 0$ as desired. This shows the length is finite.

To get a uniform bound in $K^p$ and $\bar{r}$ we improve on the bound (\ref{bd}) using \cite[Prop. ~1.1]{Liv89}: Since $r_x|_{\Gal_{F_{\tilde{v}}}}$ is a lift of 
$\bar{r}|_{\Gal_{F_{\tilde{v}}}}$ the aforequoted Proposition implies that 
$$
c\big(\WD(r_x|_{\Gal_{F_{\tilde{v}}}}) \big)\leq c(\bar{r}|_{\Gal_{F_{\tilde{v}}}})+n.
$$
(One can improve this bound but the point here is to get uniformity.) Taking $K_{\Sigma_0}$ as above with $N$ greater than $c(\bar{r}|_{\Gal_{F_{\tilde{v}}}})+2n$ the above argument guarantees that 
the $U(F_{\Sigma_0}^+)$-length of $\varinjlim_{K_{\Sigma_0}} \M_{K^p,y}'$ is the same as the $\mathcal{H}(K_{\Sigma_0})$-length of $\M_{K_{\Sigma_0}K^{\Sigma},y}'$ 
which is certainly at most $\dim_E \M_{K_{\Sigma_0}K^{\Sigma},y}'$. This dimension is uniformly bounded when $y$ is constrained to a quasi-compact subspace of 
$Y(K^p,\bar{r})$.
\end{proof}

\subsection{Strongly generic representations}\label{generic}

Recall the definition of $\pi_{x,v}$ in (\ref{pix}). We call $x$ a {\it{generic}} point if $\pi_{x,v}$ is a generic representation (i.e., when it has a Whittaker model). For instance, all classical points are generic (cf. the proof of Lemma \ref{unique}). We will impose a stronger condition on $r_x|_{\Gal_{F_{\tilde{v}}}}$ which ensures that $\pi_{x,v}$ is fully induced from a supercuspidal representation of a Levi subgroup (thus in particular is generic, cf. \cite{BZ77}). This rules out that $\pi_{x,v}$ is Steinberg for instance, and bypasses difficulties arising from having nonzero monodromy.

\begin{defn}\label{gen}
Decompose $\WD(r_x|_{\Gal_{F_{\tilde{v}}}})^{ss}\simeq \tilde{\rho}_1\oplus \cdots \oplus \tilde{\rho}_t$ into a sum of irreducible representations
$\tilde{\rho}_i:W_{F_{\tilde{v}}}\rightarrow \GL_{n_i}(\bar{\Q}_p)$. We say $r_x|_{\Gal_{F_{\tilde{v}}}}$ is {\it{strongly generic}} if $\tilde{\rho}_i \nsim \tilde{\rho}_j\otimes \epsilon$ for all $i \neq j$,
where $\epsilon: \Gal_{F_{\tilde{v}}}\rightarrow \Z_p^{\times}$ is the cyclotomic character.
\end{defn}

For the rest of this section we will assume $r_x$ is strongly generic at each $v \in \Sigma_0$. In the notation of Definition \ref{gen}, each $\tilde{\rho}_i$ corresponds to a supercuspidal representation $\tilde{\pi}_i$ of $\GL_{n_i}(F_{\tilde{v}})$ and 
$$
\pi_{x,v}\simeq \Ind_{P_{n_1,\ldots,n_t}}^{\GL_n}(\tilde{\pi}_1 \otimes \cdots \otimes \tilde{\pi}_t)
$$
since the induced representation is irreducible, cf. \cite{BZ77}. Indeed $\tilde{\pi}_i \nsim \tilde{\pi}_j(1)$ for all $i \neq j$. (The twiddles above $\rho_i$ and $\pi_i$ should not be confused with taking the contragredient.)

By Lemma \ref{sq} the factor $\pi_v$ of any irreducible subquotient $\otimes_{v \in \Sigma_0} \pi_v$ of $\varinjlim_{K_{\Sigma_0}} \M_{K^p,y}'$ has the same supercuspidal support as $\pi_{x,v}$. Since the latter is fully induced from $P_{n_1,\ldots,n_t}$ they must be isomorphic.
In summary we have arrived at the result below.

\begin{cor}
Let $y=(x,\delta) \in Y(K^p,\bar{r})$ be a point at which $r_x$ is strongly generic at every $v \in \Sigma_0$. Then $\varinjlim_{K_{\Sigma_0}} \M_{K^p,y}'$ has finite length, and every irreducible subquotient is isomorphic to $\otimes_{v \in \Sigma_0} \pi_{x,v}$. 
\end{cor}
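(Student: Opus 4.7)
The plan is to package the Corollary as a direct synthesis of Lemma \ref{sq}, Lemma \ref{length}, and the structural consequences of strong genericity via local Langlands. The heavy lifting is already done in those two lemmas; the remaining work is mostly translation between the Galois and the $\GL_n$ side.

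First I would analyze $\pi_{x,v}$ under the strong genericity hypothesis. Decompose $\WD(r_x|_{\Gal_{F_{\tilde{v}}}})^{ss}\simeq \tilde{\rho}_1\oplus\cdots\oplus\tilde{\rho}_t$ into irreducibles, and let $\tilde{\pi}_i$ be the supercuspidal representation of $\GL_{n_i}(F_{\tilde{v}})$ attached to $\tilde{\rho}_i$ via $\rec$. Strong genericity says $\tilde{\rho}_i\nsim\tilde{\rho}_j\otimes\epsilon$ for $i\neq j$, which under local Langlands (and the twist by $|\det|^{(1-n)/2}$ built into the normalization) translates into the non-linkage condition $\tilde{\pi}_i\nsim\tilde{\pi}_j(1)$. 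By the Bernstein--Zelevinsky criterion \cite{BZ77}, the normalized parabolic induction
$$\Ind_{P_{n_1,\ldots,n_t}}^{\GL_n}(\tilde{\pi}_1\otimes\cdots\otimes\tilde{\pi}_t)$$
is then irreducible, and the defining relation \eqref{pix} identifies this induced representation with $\pi_{x,v}$.

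Next I would pick an arbitrary irreducible subquotient $\otimes_{v\in\Sigma_0}\pi_v$ of $\varinjlim_{K_{\Sigma_0}}\M_{K^p,y}'$ (such exists by Zorn's lemma as noted in the footnote to Lemma \ref{sq}) and apply Lemma \ref{sq} factor by factor. The lemma gives
$$\rec(\BC_{\tilde{v}|v}(\pi_v)\otimes|\det|^{(1-n)/2})^{ss}\simeq \WD(r_x|_{\Gal_{F_{\tilde{v}}}})^{ss}\simeq \tilde{\rho}_1\oplus\cdots\oplus\tilde{\rho}_t,$$
so $\pi_v$ has the same supercuspidal support $\{\tilde{\pi}_1,\ldots,\tilde{\pi}_t\}$ (with multiplicities) as $\pi_{x,v}$. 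Since $\pi_{x,v}$ is fully induced from this supercuspidal datum and is irreducible, every irreducible representation of $\GL_n(F_{\tilde{v}})$ with that supercuspidal support is isomorphic to $\pi_{x,v}$: both appear as the unique irreducible constituent of the (irreducible) induced representation attached to the same cuspidal support on the same Levi. Hence $\pi_v\simeq\pi_{x,v}$ for every $v\in\Sigma_0$.

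Finally, finite length of $\varinjlim_{K_{\Sigma_0}}\M_{K^p,y}'$ is exactly the content of Lemma \ref{length}, so nothing new is required. The only point that deserves a sentence of justification is the ``unique irreducible subquotient with prescribed supercuspidal support'' step — this is standard once full induction is available, but I would cite \cite{BZ77} explicitly. No serious obstacle is expected; the Corollary is essentially a bookkeeping consequence of the deep inputs already in place.
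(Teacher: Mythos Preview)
Your proposal is correct and follows essentially the same route as the paper: the paper likewise deduces the Corollary directly from Lemma~\ref{length} (finite length) and Lemma~\ref{sq} (common supercuspidal support), after observing via \cite{BZ77} that strong genericity forces $\pi_{x,v}$ to be the full (irreducible) parabolic induction of the supercuspidals $\tilde{\pi}_i$. The only cosmetic difference is that the paper places the analysis of $\pi_{x,v}$ in the paragraph preceding the Corollary rather than inside the proof.
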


Altogether this proves Theorem \ref{main} in the Introduction. 

\begin{rem}\label{ext}
Naively one might hope to remove the '$ss$' in Theorem \ref{main} by showing that $\pi_{x,v}$ has no non-split self-extensions; $\Ext_{\GL_n(F_{\tilde{v}})}^1(\pi_{x,v},\pi_{x,v})=0$.
However, this is false even if we assume $\pi_{x,v}\simeq \Ind_P^{\GL_n}(\sigma)$ with $\sigma=\otimes_{j=1}^t \tilde{\pi}_j$ supercuspidal (as above). Let us explain why. For 
simplicity we assume $\sigma$ is regular, which means $w\sigma\simeq \sigma \Rightarrow w=1$ for all block-permutations $w\in S_n$. In other words $\tilde{\pi}_i\not \simeq\tilde{\pi}_j$ for $i \neq j$ with $n_i=n_j$. Under this assumption the 'geometric lemma' (cf. \cite[Prop. 6.4.1]{Cas95}) gives an actual direct sum decomposition of the $N$-coinvariants:
$$
(\pi_{x,v})_N\simeq \oplus_{w} w\sigma
$$
with $w$ running over block-permutations as above. The usual adjointness property of $(-)_N$ is easily checked to hold for $\Ext^i$ (cf. \cite[Prop. 2.9]{Pra13}). Therefore 
$$
\Ext_{\GL_n}^1(\pi_{x,v},\pi_{x,v})\simeq \Ext_M^1((\pi_{x,v})_N, \sigma)\simeq \prod_w\Ext_M^1(w\sigma, \sigma)\simeq \Ext_M^1(\sigma,\sigma).
$$
In the last step we used \cite[Cor. 5.4.4]{Cas95} to conclude that $\Ext_M^1(w\sigma, \sigma)=0$ for $w\neq 1$. However, $\Ext_M^1(\sigma,\sigma)$ is always non-trivial. For example, consider the principal series case where $P=B$ and $\sigma$ is a smooth character of $T$. Here $\Ext_T^1(\sigma,\sigma)\simeq 
\Ext_T^1({\bf{1}},{\bf{1}})\simeq \Hom(T,E)\simeq E^n$. In general, if $\sigma$ is an irreducible representation of $M$ with central character $\omega$, there is a short exact sequence
$$
0 \longrightarrow \Ext_{M,\omega}^1(\sigma,\sigma)\longrightarrow \Ext_{M}^1(\sigma,\sigma) \longrightarrow \Hom(Z_M,E) \longrightarrow 0
$$
(cf. \cite[Prop. 8.1]{Pas10} whose proof works verbatim with coefficients $E$ instead of $\bar{\F}_p$). If $\sigma$ is supercuspidal it is projective and/or injective in the category of smooth $M$-representations with central character $\omega$, and vice versa (cf. \cite[Thm. 5.4.1]{Cas95} and \cite{AR04}). In particular $\dim_E\Ext_{M}^1(\sigma,\sigma)=\dim(Z_M)$.

By Proposition \ref{center} all the self-extensions of $\pi_{x,v}$ arising from $\varinjlim_{K_{\Sigma_0}} \M_{K^p,y}'$ actually live in the full subcategory of smooth representations with central character $\omega_{\pi_{x,v}}$. As we just pointed out, supercuspidal is equivalent to being projective and/or injective in this category. Thus at least in the case where $\otimes_{v\in \Sigma_0}\pi_{x,v}$ is supercuspidal we can remove the '$ss$' in Theorem \ref{main}.
\end{rem}

%------------------------------------Newton m_y--------------------

\begin{rem}\label{my}
We comment on the multiplicity $m_y$ in the analogous case of $\GL(2)_{/\Q}$. Replacing our unitary group $U$ with $\GL(2)_{/\Q}$, and replacing $\hat{S}(K^p,E)$
with the completed cohomology of modular curves $\hat{H}^1(K^p)_E$ with tame level $K^p \subset \GL_2(\A_f^p)$, a statement analogous to Theorem \ref{main} is a consequence of
Emerton's local--global compatibility theorem \cite[Thm.~1.2.1]{Em11b}, under the assumption that $\overline{r}|_{\Gal_{\Q_p}}$ is not isomorphic to a twist of $\begin{psmallmatrix}
1 & *\\0 & 1\end{psmallmatrix}$ or $\begin{psmallmatrix} 1 & *\\0 & \overline{\epsilon}\end{psmallmatrix}$. With this assumption, the multiplicities $m_y$ are (at least predicted to be) equal to
$2$ (coming from the two-dimensional Galois representation $r_x$), and the representations of $\GL_2(\Q_{\Sigma_0})$ which appear are semisimple.

Indeed, it follows from \emph{loc.~cit.} that we have $m_y = 2\dim_E J_B^\delta(\Pi(\varrho_x)^\mathrm{an})$ where $\varrho_x:=r_x|_{\Gal_{\Q_p}}$. When $\varrho_x$ is absolutely irreducible, it follows from \cite[Thm.~1.1, Thm.~1.2]{Dos14} (see also \cite[Thm.~0.6]{Col14}) that $J_B^\delta(\Pi(\varrho_x)^\mathrm{an})$ has dimension at most $1$. If $\varrho_x$ is reducible, then  \cite[Conj.~3.3.1(8), Lem.~4.1.4]{Em06d} predicts that $J_B^\delta(\Pi(\varrho_x)^\mathrm{an})$ again has dimension at most $1$, unless $\varrho_x$ is of the form $\eta \oplus \eta$ for some continuous character $\eta:\Gal_{\Q_p}\rightarrow E^{\times}$.

In the exceptional case with $\varrho_x\simeq \eta \oplus \eta$ scalar, where \cite[Thm.~1.2.1 (2)]{Em11b} does not apply, we have $\dim_E J_B^{\delta}(\Pi(\varrho_x)^\mathrm{an}) =2$ when $\delta = \eta|\cdot|\otimes\eta\epsilon|\cdot|^{-1}$ and therefore \cite[Conj.~1.1.1]{Em11b} predicts that we have $m_y = 4$ for $y=(x,\eta|\cdot|\otimes\eta\epsilon|\cdot|^{-1})$. Again the representation of $\GL_2(\Q_{\Sigma_0})$ which appears is predicted to be semisimple.
\end{rem}

%--------------------------------------------------------------------------

\subsection{The general case at Iwahori level}\label{iwahori}

In this section we assume $\bar{r}$ is automorphic of tame level $K^p=K_{\Sigma_0}K^{\Sigma}$ where $K_{\Sigma_0}=\prod_{v \in \Sigma_0} K_v$ is a product of {\it{Iwahori}} subgroups. This can usually be achieved by a solvable base change; i.e. by replacing $\bar{r}$ with its restriction $\bar{r}|_{\Gal_{F'}}$ for some solvable Galois extension $F'/F$
(cf. the 'Skinner-Wiles trick' \cite{SW01}). We make this assumption to employ a genericity criterion of Barbasch-Moy \cite{BM94}, which was recently strengthened by Chan-Savin
in \cite{CSa17a} and \cite{CSa17b}.

\subsubsection{Genericity and Iwahori-invariants}

The setup of \cite{CSa17a} is the following. Let $G$ be a split group over a $p$-adic field $F$, with a choice of Borel subgroup $B=TU$. We assume these are defined over $\OO=\OO_F$, and let $I \subset G(\OO)$ be the Iwahori subgroup (the inverse image of $B$ over the residue field $\F_q$). The Iwahori-Hecke algebra $\mathcal{H}$ has basis 
$T_w=[IwI]$ where $w \in W_{\text{ex}}$ runs over the extended affine Weyl group $W_{\text{ex}}=N_G(T)/T(\OO)$. The basis vectors satisfy the usual relations
\begin{itemize}
\item $T_{w_1}T_{w_2}=T_{w_1w_2}$ when $\ell(w_1w_2)=\ell(w_1)+\ell(w_2)$,
\item $(T_s-q)(T_s+1)=0$ when $\ell(s)=1$.
\end{itemize}
Here $\ell: W_{\text{ex}}\rightarrow \Z$ denotes the length function defined by $q^{\ell(w)}=|IwI/I|$. Inside of $\mathcal{H}$ we have the subalgebra $\mathcal{H}_W$ of functions supported on $G(\OO)$, which has basis $\{T_w\}_{w \in W}$ where $W$ is the (actual) Weyl group. The algebra $\mathcal{H}_W$ carries a natural one-dimensional representation
$\text{sgn}: \mathcal{H}_W \rightarrow \C$ which sends $T_w \mapsto (-1)^{\ell(w)}$, and we are interested in the sgn-isotypic subspaces of $\mathcal{H}$-modules.

\begin{defn}\label{spart}
For a smooth $G$-representation $\pi$ (over $\C$) we introduce the following subspace of the Iwahori-invariants
$$
\Bbb{S}(\pi)=\bigcap_{w\in W}\big(\pi^I\big)^{T_w=(-1)^{\ell(w)}}.
$$
In other words the (possibly trivial) subspace of $\pi^I$ where $\mathcal{H}_W$ acts via the $\text{sgn}$-character. 
\end{defn}

Fix a non-trivial continuous unitary character $\psi: F \rightarrow \C^{\times}$ and extend it to a character of $U$ as in \cite[Sect. 4]{CSa17a}. For a smooth $G$-representation $\pi$ 
we let $\pi_{U,\psi}$ be the 'top derivative' of $\psi$-coinvariants (whose dual is exactly the space of $\psi$-Whittaker functionals on $\pi$). 

\begin{thm}\label{savin} (Barbasch-Moy, Chan-Savin).
Let $\pi$ be a smooth $G$-representation which is generated by $\pi^I$. Then the natural map $\Bbb{S}(\pi) \hookrightarrow \pi \twoheadrightarrow \pi_{U,\psi}$ is an isomorphism. 
\end{thm}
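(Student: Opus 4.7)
The plan is to establish the isomorphism by reducing to the case of unramified principal series, exploiting exactness of both sides of the natural map.

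\textbf{Step 1 (Exactness).} I would first check that both functors $\pi \mapsto \mathbb{S}(\pi)$ and $\pi \mapsto \pi_{U,\psi}$ are exact on the full subcategory of smooth $G$-representations generated by their $I$-invariants (the Iwahori block). Since $I$ is a compact open subgroup and we work in characteristic zero, the functor $\pi \mapsto \pi^I$ is exact. The finite Hecke algebra $\mathcal{H}_W$ is semisimple (e.g.\ because at the value $q$ it is isomorphic, over $\mathbb{C}$, to the group algebra $\mathbb{C}[W]$), so picking out the $\mathrm{sgn}$-isotypic summand is exact. Combining these, $\mathbb{S}$ is exact. On the other side $(U,\psi)$-coinvariants is exact as a functor of smooth representations (write $U = \bigcup_n U_n$ with $U_n$ compact open and use that averaging against $\psi|_{U_n}$ is a direct limit of exact functors), and the natural transformation $\mathbb{S}(\pi) \hookrightarrow \pi \twoheadrightarrow \pi_{U,\psi}$ is defined for all $\pi$.

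\textbf{Step 2 (Reduction to principal series).} By the Borel--Casselman equivalence, $\pi \mapsto \pi^I$ is an equivalence between the Iwahori block and the category of $\mathcal{H}$-modules. Using the Bernstein presentation $\mathcal{H} = \mathcal{A} \otimes \mathcal{H}_W$ as an $(\mathcal{A},\mathcal{H}_W)$-bimodule, every $\mathcal{H}$-module is a quotient of a direct sum of standard modules $\mathcal{H} \otimes_{\mathcal{A}} \chi$, which correspond under Borel--Casselman to unramified principal series $I(\chi) := \mathrm{Ind}_B^G(\chi)$. By naturality and Step 1 it therefore suffices to prove the theorem when $\pi = I(\chi)$ for an arbitrary unramified character $\chi: T \to \mathbb{C}^\times$.

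\textbf{Step 3 (Principal series computation).} For $\pi = I(\chi)$ I would use the Iwahori--Bruhat decomposition to write $\pi^I = \bigoplus_{w\in W} \mathbb{C}\cdot f_w$, with Casselman's formulas describing the action of each $T_s$ ($s$ a simple reflection) in this basis. A direct computation then shows that $\mathbb{S}(I(\chi))$ is one-dimensional, spanned by an explicit vector $v_\chi = \sum_w c_w(\chi) f_w$ with the coefficients $c_w(\chi)$ rational in $\chi$ and nonvanishing for generic $\chi$. Meanwhile $I(\chi)_{U,\psi}$ is one-dimensional (the existence and uniqueness of a Whittaker functional on an unramified principal series, via a Jacquet integral and meromorphic continuation). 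Evaluating the Whittaker functional on $v_\chi$, one recognizes (a multiple of) the Casselman--Shalika formula, which shows nonvanishing for all $\chi$ in the unramified torus.

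\textbf{Main obstacle.} The serious technical point is the principal series case for \emph{all} unramified $\chi$, including the non-regular and reducible loci, where the one-dimensionality of both sides can degenerate along a family but the map itself must stay an isomorphism. I would handle this by viewing both $\mathbb{S}(I(\chi))$ and $I(\chi)_{U,\psi}$ as fibers of coherent sheaves over the space of unramified characters of $T$, checking that the natural map is a morphism of coherent sheaves which is an isomorphism on the generic (regular) locus, and deducing the isomorphism everywhere from fiber-dimension constancy together with a rigidity argument (this is essentially the refinement Chan--Savin carry out over the Barbasch--Moy analysis, which originally treated the regular case).
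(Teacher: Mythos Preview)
The paper does not prove this; it simply cites \cite[Cor.~4.5]{CSa17a} and \cite[Thm.~3.5]{CSa17b}. Your outline has two gaps. The minor one is in Step~2: not every $\mathcal{H}$-module is a quotient of a direct sum of standard modules $M(\chi)=\mathcal{H}\otimes_{\mathcal{A}}\chi$. Each $M(\chi)$ has a central character, so any quotient of $\bigoplus_i M(\chi_i)$ is locally finite over the Bernstein center $Z(\mathcal{H})=\mathcal{A}^W$; but $\mathcal{H}$ itself (corresponding to $\mathrm{c\text{-}ind}_I^G\mathbf{1}$) is free of positive rank over $\mathcal{A}^W$ and hence not locally $Z(\mathcal{H})$-finite. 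The correct universal object to reduce to is a free $\mathcal{H}$-module, i.e.\ the representation $\mathrm{c\text{-}ind}_I^G\mathbf{1}$, not a principal series.

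The more serious gap is in Step~3. Your ``fiber-dimension constancy together with a rigidity argument'' does not close the non-regular case: a morphism between two rank-one locally free sheaves over $\mathrm{Spec}(\mathcal{A})$ that is generically an isomorphism can vanish along a divisor (multiplication by any nonconstant regular function is already a counterexample), so knowing both sides are one-dimensional at every $\chi$ and that the map is an isomorphism for generic $\chi$ does not force it to be nonzero for all $\chi$. Invoking Casselman--Shalika does not help either, since that formula evaluates the Whittaker functional on the \emph{spherical} vector, not the $\mathrm{sgn}$-vector. Chan--Savin's actual fix is not a deformation argument of this kind: they compute the Iwahori invariants of the Gelfand--Graev representation directly and identify $(\mathrm{c\text{-}ind}_U^G\psi)^I$ with $\mathcal{H}\otimes_{\mathcal{H}_W}\mathrm{sgn}$ as $\mathcal{H}$-modules. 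The theorem then follows for all $\pi$ in the Iwahori block at once by Frobenius reciprocity and the Borel--Casselman equivalence, with no need to specialize to individual characters or control a limit.
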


\begin{proof}
This is \cite[Cor. 4.5]{CSa17a} which is a special case of \cite[Thm. 3.5]{CSa17b}.
\end{proof}

In particular, an irreducible representation $\pi$ with $\pi^I\neq 0$ is {\it{generic}} if and only if $\Bbb{S}(\pi)\neq 0$, in which case $\dim \Bbb{S}(\pi)=1$. This is the genericity criterion we will use below. 

\subsubsection{The $\Bbb{S}$-part of the eigenvariety}

We continue with the usual setup and notation. We run the eigenvariety construction with $\hat{S}(K^p,E)_{\m}$ replaced by its $\Bbb{S}$-subspace. More precisely, for each $v \in \Sigma_0$ we have the functor $\Bbb{S}_v$ (Def. \ref{spart}) taking smooth $\GL_n(F_{\tilde{v}})$-representations to vector spaces over $E$. We apply their composition $\Bbb{S}=\circ_{v\in \Sigma_0} \Bbb{S}_v$ to $\varinjlim_{K_{\Sigma_0}} \hat{S}(K^p,E)_{\m}$. I.e., we take
$$
\Pi:=\bigcap_{v \in \Sigma_0}\bigcap_{w\in W_v}\big(\hat{S}(K^p,E)_{\m}\big)^{T_w=(-1)^{\ell(w)}}.
$$
Clearly $\Pi$ is a closed subspace of $\hat{S}(K^p,E)_{\m}$, and therefore an admissible Banach representation of $G=G(\Q_p)$. As a result $J_B(\Pi^{\text{an}})'$ is coadmissible 
(cf. \cite[Prop. 3.4]{BHS16}) and hence the global sections $\Gamma(X_{\bar{r}}\times \hat{T},\M_{\Pi})$ of a coherent sheaf $\M_{\Pi}$ on $X_{\bar{r}}\times \hat{T}$. We let
$$
Y_{\Pi}(K^p,\bar{r})=\text{supp}(\M_{\Pi})
$$
be its schematic support with the usual annihilator ideal sheaf. Mimicking the proof of Lemma \ref{fib} we obtain the following description of the dual fiber of $\M_{\Pi}$ at a point $y=(x,\delta)\in Y_{\Pi}(K^p,\bar{r})$;
$$
\M_{\Pi,y}'\simeq J_B^{\delta}(\Pi[\p_x]^{\an})\simeq \bigcap_{v \in \Sigma_0}\bigcap_{w\in W_v} J_B^{\delta}(\hat{S}(K^p,E)_{\m}[\p_x]^{\an})^{T_w=(-1)^{\ell(w)}}.
$$
This clearly shows $Y_{\Pi}(K^p,\bar{r})$ is a closed subvariety of $Y(K^p,\bar{r})$. Our immediate goal is to show equality.

\begin{lem}\label{eq}
$Y_{\Pi}(K^p,\bar{r})=Y(K^p,\bar{r})$.
\end{lem}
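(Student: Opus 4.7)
The plan is to exploit that $Y_{\Pi}(K^p,\bar{r})$ is a closed analytic subvariety of $Y(K^p,\bar{r})$ (as the support of the coherent sheaf $\M_{\Pi}$ on $X_{\bar{r}}\times \hat{T}$), so that it is enough to verify it contains a Zariski dense subset of $Y(K^p,\bar{r})$. The natural candidate is the collection of classical points of non-critical slope, which is very Zariski dense by Lemma \ref{ingredients}(2).

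The main step is then to check that every classical point $y=(x,\delta)$ of non-critical slope lies in $Y_{\Pi}(K^p,\bar{r})$, i.e.\ that $\M_{\Pi,y}'\neq 0$. By Proposition \ref{ncr} we have $\M_y'\simeq \bigotimes_{v\in \Sigma_0}\pi_{x,v}^{K_v}$ as $\HH(K_{\Sigma_0})$-modules, and the description of $\M_{\Pi,y}'$ recorded just before the lemma statement identifies it with $\bigotimes_{v\in \Sigma_0}\Bbb{S}_v(\pi_{x,v}^{K_v})$; the tensor-product factorization holds because the operators $T_w$ for $w\in W_v$ act only on the $v$-th factor. Each $\Bbb{S}_v(\pi_{x,v}^{K_v})$ is nonzero by Theorem \ref{savin} (Barbasch-Moy/Chan-Savin) provided $\pi_{x,v}$ is generic. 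Genericity follows from temperedness: since $r_x$ is an irreducible lift of the absolutely irreducible $\bar{r}$, the base change $\BC_{F/F^+}(\pi_x)$ is cuspidal and regular algebraic, hence tempered (cf.\ Remark \ref{mult}), and tempered representations of $\GL_n$ over a local field are always generic.

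Putting these together: $Y_{\Pi}(K^p,\bar{r})$ is Zariski closed in $Y(K^p,\bar{r})$ and contains a Zariski dense subset, so $Y_{\Pi}(K^p,\bar{r})=Y(K^p,\bar{r})$. I do not anticipate a serious obstacle here; the argument is a clean marriage of Proposition \ref{ncr}, Lemma \ref{ingredients}(2), and the Chan-Savin genericity criterion. The only minor subtlety worth double-checking is the tensor-product compatibility $\Bbb{S}(\bigotimes_v \pi_{x,v}^{K_v}) = \bigotimes_v \Bbb{S}_v(\pi_{x,v}^{K_v})$, which is transparent from the way the Iwahori-Hecke algebras act across disjoint places in $\Sigma_0$.
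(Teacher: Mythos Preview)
Your proposal is correct and follows essentially the same approach as the paper: show the closed subvariety $Y_{\Pi}(K^p,\bar{r})$ contains the Zariski dense set of classical (non-critical) points by combining the embedding from Proposition~\ref{ncr}, genericity of $\pi_{x,v}$ via cuspidality of the base change, and the Chan--Savin criterion (Theorem~\ref{savin}). Two minor remarks: the paper only uses the \emph{embedding} part of Proposition~\ref{ncr} (so it does not need to restrict to non-critical slope), and notationally one should write $\Bbb{S}_v(\pi_{x,v})$ rather than $\Bbb{S}_v(\pi_{x,v}^{K_v})$, since $\Bbb{S}_v$ already passes to Iwahori invariants by definition.
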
 

\begin{proof}
Since the classical points are Zariski dense in $Y(K^p,\bar{r})$ we just have to show each classical $y=(x,\delta)$ in fact lies in $Y_{\Pi}(K^p,\bar{r})$. Let $\pi$ be an automorphic representation such that $r_x \simeq r_{\pi,\iota}$. This is an irreducible Galois representation (since $\bar{r}$ is) and thus $\text{BC}_{F/F^+}(\pi)$ is a cuspidal and therefore generic automorphic representation of $\GL_n(\A_F)$. In particular the factors of $\otimes_{v\in \Sigma_0} \pi_v$ are generic. Taking $T_w$-eigenspaces of the embedding 
$\otimes_{v\in \Sigma_0} \pi_v^{K_v}\hookrightarrow \M_y'$ from Prop. \ref{ncr} yields a map $\otimes_{v\in \Sigma_0} \Bbb{S}_v(\pi_v) \hookrightarrow \M_{\Pi,y}'$. Finally, by 
Theorem \ref{savin} we conclude that $\otimes_{v\in \Sigma_0} \Bbb{S}_v(\pi_v)\neq 0$ so that $\M_{\Pi,y}'\neq 0$.
\end{proof}

\subsubsection{Conclusion}

Now let $y \in Y(K^p,\bar{r})$ be an arbitrary point. By Lemma \ref{eq} we now know $\M_{\Pi,y}'\neq 0$. Note that $\M_{\Pi,y}'=\Bbb{S}(\varinjlim_{K_{\Sigma_0}}\M_y')$ and we immediately infer that $\varinjlim_{K_{\Sigma_0}}\M_y'$ does have {\it{some}} generic constituent (by \ref{savin}). 

Suppose $\otimes_{v\in \Sigma_0}\pi_v$ is {\it{any}} generic constituent of $\varinjlim_{K_{\Sigma_0}}\M_y'$. Lemma \ref{sq} tells us $\pi_v$ and $\pi_{x,v}$ have the same supercuspidal support. By the theory of Bernstein-Zelevinsky derivatives $\Ind_{P_{n_1,\ldots,n_t}}^{\GL_n}(\tilde{\pi}_1 \otimes \cdots \otimes \tilde{\pi}_t)$ has a {\it{unique}}
generic constituent (where the $\tilde{\pi}_i$ are supercuspidals as before). Consequently, there is a unique generic representation $\pi_{x,v}^{\text{gen}}$ with the same supercuspidal support as $\pi_{x,v}$, and $\pi_v \simeq  \pi_{x,v}^{\text{gen}}$.

We summarize our findings below.

\begin{thm}\label{iwcase}
Let $y=(x,\delta)\in Y(K^p,\bar{r})$ be an arbitrary point, where $K_{\Sigma_0}$ is a product of Iwahori subgroups. Then the following holds:
\begin{itemize}
\item[(1)] $\otimes_{v\in \Sigma_0} \pi_{x,v}^{\text{gen}}$ \underline{occurs} as a constituent of 
$\varinjlim_{K_{\Sigma_0}}\M_y'$ (possibly with multiplicity).
\item[(2)] Every generic constituent of $\varinjlim_{K_{\Sigma_0}}\M_y'$ is isomorphic to $\otimes_{v\in \Sigma_0} \pi_{x,v}^{\text{gen}}$.
\end{itemize}
Here $\pi_{x,v}^{\text{gen}}$ is the generic representation of $\GL_n(F_{\tilde{v}})$ with the same supercuspidal support as $\pi_{x,v}$. 
\end{thm}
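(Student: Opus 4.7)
The plan is to exploit the fact (already established via Lemma \ref{eq}) that the $\mathbb{S}$-cut-out sheaf $\mathcal{M}_\Pi$ has the full eigenvariety $Y(K^p,\bar{r})$ as its support, combined with the Chan--Savin genericity criterion (Theorem \ref{savin}) and the uniqueness of the generic constituent in a supercuspidal-support class.

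First, for (1): pick an arbitrary point $y=(x,\delta)\in Y(K^p,\bar{r})$. By Lemma \ref{eq} we have $\mathcal{M}_{\Pi,y}'\neq 0$, and by the displayed description immediately after the definition of $\mathcal{M}_\Pi$ we can rewrite
$$
\mathcal{M}_{\Pi,y}' \simeq \mathbb{S}\bigl(\varinjlim_{K_{\Sigma_0}} \mathcal{M}_y'\bigr)
= \bigotimes_{v\in\Sigma_0}\mathbb{S}_v\bigl(\varinjlim_{K_{\Sigma_0}} \mathcal{M}_y'\bigr),
$$
where $\mathbb{S}_v$ is the intersection of the $T_w$-eigenspaces for $w\in W_v$. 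Since $\mathbb{S}_v$ is an exact functor on smooth $\GL_n(F_{\tilde{v}})$-representations (it is the joint eigenspace for commuting idempotents in the finite-type Hecke algebra $\mathcal{H}_{W_v}$ acting on the Iwahori-invariants, and $\mathrm{char}\, E=0$), the composite $\mathbb{S}=\circ_{v\in\Sigma_0}\mathbb{S}_v$ is also exact. Lemma \ref{length} shows that $\varinjlim_{K_{\Sigma_0}} \mathcal{M}_y'$ has finite length, so exactness gives
$$
\dim_E \mathcal{M}_{\Pi,y}' = \sum_{\sigma} m(\sigma)\cdot \dim_E \mathbb{S}(\sigma)
$$
where $\sigma$ runs over (isomorphism classes of) irreducible constituents of $\varinjlim_{K_{\Sigma_0}} \mathcal{M}_y'$ with multiplicity $m(\sigma)$. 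The non-vanishing of $\mathcal{M}_{\Pi,y}'$ then forces at least one $\sigma=\otimes_{v\in\Sigma_0}\pi_v$ with $\mathbb{S}(\sigma)\neq 0$, i.e.\ with $\mathbb{S}_v(\pi_v)\neq 0$ for every $v\in\Sigma_0$. Each such $\pi_v$ has nonzero Iwahori-invariants (since $\mathbb{S}_v(\pi_v)\subset \pi_v^{I_v}$), so Theorem \ref{savin} applies and forces each $\pi_v$ to be generic.

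Next I would pin down that constituent. By Lemma \ref{sq}, the Weil--Deligne representation $\mathrm{WD}(r_x|_{\Gal_{F_{\tilde v}}})^{ss}$ and $\rec(\mathrm{BC}_{\tilde v|v}(\pi_v)\otimes|\det|^{(1-n)/2})^{ss}$ agree for all $v\in\Sigma_0$; equivalently $\pi_v$ and $\pi_{x,v}$ share the same supercuspidal support. Writing that support as $(M,\tilde{\pi}_1\otimes\cdots\otimes\tilde{\pi}_t)$ as in Definition \ref{gen}, the theory of Bernstein--Zelevinsky derivatives (\cite{BZ77}) implies that the parabolically induced representation $\mathrm{Ind}_{P_{n_1,\ldots,n_t}}^{\GL_n}(\tilde\pi_1\otimes\cdots\otimes\tilde\pi_t)$ has a \emph{unique} generic irreducible subquotient, so that there is at most one generic irreducible representation $\pi_{x,v}^{\text{gen}}$ of $\GL_n(F_{\tilde v})$ with this supercuspidal support. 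Thus $\pi_v\simeq \pi_{x,v}^{\text{gen}}$ for all $v$, proving (1).

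Part (2) is then immediate: any generic constituent $\otimes_{v\in\Sigma_0}\pi_v$ of $\varinjlim_{K_{\Sigma_0}}\mathcal{M}_y'$ has $\pi_v$ generic, and Lemma \ref{sq} combined with the same supercuspidal-support uniqueness gives $\pi_v\simeq\pi_{x,v}^{\text{gen}}$. The conceptual obstacle to worry about is really just the exactness of $\mathbb{S}$ — once that is in hand (and it is essentially formal because $\mathbb{S}_v$ is a joint eigenspace functor in characteristic $0$), the argument is a clean combination of Lemma \ref{eq}, Theorem \ref{savin}, and Lemma \ref{sq}; the rest is a mild exercise in Bernstein--Zelevinsky theory. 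I do not expect a genuine obstacle here, since the Chan--Savin criterion was introduced precisely to bypass the monodromy issues that made direct specialization (as in Theorem \ref{main}) only give the supercuspidal support.
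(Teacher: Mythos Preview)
Your proposal is correct and follows essentially the same route as the paper: use Lemma \ref{eq} to get $\M_{\Pi,y}'=\Bbb{S}(\varinjlim_{K_{\Sigma_0}}\M_y')\neq 0$, deduce via Theorem \ref{savin} (plus finite length from Lemma \ref{length}) that some constituent is generic, and then invoke Lemma \ref{sq} together with the Bernstein--Zelevinsky uniqueness of the generic subquotient to identify it as $\otimes_{v\in\Sigma_0}\pi_{x,v}^{\text{gen}}$. The only cosmetic slip is the displayed equality $\Bbb{S}(\cdot)=\bigotimes_{v}\Bbb{S}_v(\cdot)$, which should be an iterated intersection (composite functor) rather than a tensor product; and note that the paper phrases the existence step via the exactness of $(-)_{U,\psi}$ (through Theorem \ref{savin}) rather than arguing exactness of $\Bbb{S}$ directly, but the two formulations are equivalent here.
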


It would be interesting to relax the assumption that $K_v$ is Iwahori for $v \in \Sigma_0$. In \cite{CSa17b} they consider more general $\frak{s}$ in the Bernstein spectrum of 
$\GL_{mr}$ (where the Levi is $\GL_r \times \cdots \times \GL_r$ and the supercuspidal representation is $\tau \otimes \cdots \otimes \tau$). For such an $\frak{s}$-type $(J,\rho)$ one can identify the Hecke algebra $\mathcal{H}(J,\rho)$ with the Iwahori-Hecke algebra of $\GL_m$ -- but over a possibly larger $p$-adic field. This is used to define the subalgebra 
$\mathcal{H}_{S_m}\subset \mathcal{H}(J,\rho)$ which carries the sgn-character. If $\pi \in \mathcal{R}^{\frak{s}}(\GL_{mr})$ is an admissible representation,
\cite[Thm. 3.5]{CSa17b} shows that a certain adjunction map $\Bbb{S}_{\rho}(\pi) \rightarrow \pi_{U,\psi}$ is an isomorphism, where $\Bbb{S}_{\rho}(\pi)$ denotes the sgn-isotypic subspace of $\Hom_J(\rho,\pi)$. (In the case $r=1$ and $\tau={\bf{1}}$ this recovers Theorem \ref{savin} above; the type is $(I,{\bf{1}})$.) Instead of considering 
$\hat{S}(K_{\Sigma_0}K^{\Sigma},E)_{\m}$ in the eigenvariety construction one could take $K_{\Sigma_0}=\prod_{v \in \Sigma_0} J_v$ and $\rho=\otimes_{v\in \Sigma_0}\rho_v$ for certain types $(J_v,\rho_v)$ and consider the space $\Hom_{K_{\Sigma_0}}(\rho,\hat{S}(K^{\Sigma},E)_{\m})$ which would result in an eigenvariety $Y_{\rho}(K_{\Sigma_0}K^{\Sigma},\bar{r})$ which of course sits as a closed subvariety of $Y(K_{\Sigma_0}'K^{\Sigma},\bar{r})$ for $K_{\Sigma_0}'\subset \ker(\rho)$. If we take an arbitrary point 
$y \in Y_{\rho}(K_{\Sigma_0}K^{\Sigma},\bar{r})$ we know $\varinjlim_{K_{\Sigma_0}}\M_y'$ lies in the $\frak{s}_v$-component (for each $v \in \Sigma_0$) and it is at least plausible the above arguments with $\Bbb{S}$ replaced by $\Bbb{S}_{\rho}$ would allow us to draw the same conclusion: $\varinjlim_{K_{\Sigma_0}}\M_y'$ admits $\otimes_{v\in \Sigma_0} \pi_{x,v}^{\text{gen}}$ as its unique generic irreducible subquotient (up to multiplicity). The inertial classes $\frak{s}$ considered in \cite{CSa17b} are somewhat limited. However, Savin has communicated to us a more general (unpublished) genericity criterion -- without restrictions on $\frak{s}$.

%-------------------------------------

%\appendix

%\section{Cont a}

%\section{Diff b}

%--------------------------------------

%\subsection*{Acknowledgments} We would like to thank

%--------------------------------------

%-----------------------------------------------------------------------------------------------------------

\bigskip

\noindent {\it{E-mail address}}: {\texttt{hchristian.johansson@googlemail.com}}

\noindent {\sc{DPMMS, Centre for Math. Sciences, Wilberforce Road, Cambridge CB3 0WB, UK.}}

\medskip

\noindent {\it{E-mail address}}: {\texttt{j.newton@kcl.ac.uk}}

\noindent {\sc{Department of Mathematics, King's College London, Strand, London, WC2R 2LS, UK.}}

\medskip

\noindent {\it{E-mail address}}: {\texttt{csorensen@ucsd.edu}}

\noindent {\sc{Department of Mathematics, UCSD, La Jolla, CA, USA.}}

\end{document}